\newcommand{\mmod}[1]{\;{\rm mod}\;#1}
\newcommand{\mpmod}[1]{\;({\rm mod}\;#1)}
\newcommand{\sumstar}[1]{\sideset{}{^*}\sum_{#1}}
\newcommand{\im}{\text{Im}}
\newcommand{\re}{\text{Re}}
\newcommand{\eps}{\varepsilon}
\newcommand{\rmi}{\mathrm{i}}
\newtheorem{lemma}{Lemma}[section]
\newtheorem{thm}{Theorem}
\newtheorem{proposition}[lemma]{Proposition}
\theoremstyle{definition}
\newtheorem*{acknowledgment}{Acknowledgments}
\newtheorem*{notation}{Notation}
\numberwithin{equation}{section}
\title{Sums of almost equal squares of primes}
\author{Angel Kumchev}
\address{Department of Mathematics\\ Towson University\\ Towson, MD 21252\\ U.S.A.}
\email{akumchev@towson.edu}
\author{Taiyu Li}
\address{School of Mathematics\\ Shandong University\\ Jinan, Shandong 250100\\ P.R. China}
\email{yli@mail.sdu.edu.cn}
\begin{document}

\begin{abstract}
  We study the representations of large integers $n$ as sums $p_1^2 + \dots + p_s^2$, where $p_1, \dots, p_s$ are primes with $| p_i - (n/s)^{1/2} | \le n^{\theta/2}$, for some fixed $\theta < 1$. When $s = 5$ we use a sieve method to show that all sufficiently large integers $n \equiv 5 \pmod {24}$ can be represented in the above form for $\theta > 8/9$. This improves on earlier work by Liu, L\"{u} and Zhan \cite{LiLvZh06}, who established a similar result for $\theta > 9/10$. We also obtain estimates for the number of integers $n$ satisfying the necessary local conditions but lacking representations of the above form with $s = 3, 4$. When $s = 4$ our estimates improve and generalize recent results by L\"{u} and Zhai \cite{LvZh09}, and when $s = 3$ they appear to be first of their kind.
\end{abstract}

\subjclass[2010]{Primary 11P32; Secondary 11L20, 11N05, 11N36.}
\keywords{Waring--Goldbach problem, almost equal squares, exceptional sets, distribution of primes, sieve methods, exponential sums.}

\maketitle

\section{Introduction}
\label{secIntrod}

The study of additive representations as sums of squares of primes goes back to the work of Hua~\cite{Hua38}. Define the sets
\begin{align*}
  \mathcal H_3 &= \{ n \in\mathbb{N}: n \equiv 3 \mpmod{24}, \; 5\nmid n\},\\
  \mathcal H_s &= \{ n \in\mathbb{N}: n \equiv s \mpmod{24}\} \qquad (s \ge 4).
\end{align*}
Hua proved that all sufficiently large integers $n \in \mathcal H_5$ can be expressed as sums of five primes and that ``almost all'' integers $n \in \mathcal H_s$, $s = 3, 4$, can be expressed as sums of $s$ squares of primes. Let $E_s(X)$ denote the number of positive integers $n \in \mathcal H_s$, with $n \le X$, that cannot be represented as sums of $s$ squares of primes. As was observed by Schwarz \cite{Schw61}, Hua's method yields the bounds
\[
  E_s(X) \ll X(\log X)^{-A} \qquad (s = 3, 4),
\]
for any fixed $A > 0$. Since the late 1990s, a burst of activity has produced a series of successive improvements on these two bounds, culminating in the estimates (see Harman and Kumchev \cite{HaKu10})
\begin{equation}\label{i.0}
  E_s(X) \ll X^{(5-s)/2-3/20+\eps} \qquad (s = 3, 4),
\end{equation}
for any fixed $\eps > 0$.

In the present note, we study the additive representations of a large integer $n$ as the sum of three, four or five ``almost equal'' squares of primes. Given a large integer $n \in \mathcal H_s$, $s \ge 3$, we are interested in its representations of the form
\begin{equation}\label{eq1.2}
  \begin{cases}
    n = p_1^2 + \dots + p_s^2, \\
    \left| p_j - (n/s)^{1/2} \right| \le H \qquad (j = 1, \dots, s),
  \end{cases}
\end{equation}
where $H = o( n^{1/2} )$. The first to consider this question were Liu and Zhan \cite{LiZh96}, who showed under the assumption of the Generalized Riemann Hypothesis (GRH) that all sufficiently large $n \in \mathcal H_5$ can be represented in the form \eqref{eq1.2} with $s = 5$ and $H = n^{\theta/2}$ for any fixed $\theta \in (0.9,1)$. Shortly thereafter, Bauer \cite{Baue97} proved that the same conclusion holds unconditionally for $\theta \in (1 - \delta, 1)$, where $\delta > 0$ is a (small) absolute constant. Bauer's method is a variant of a method introduced by Montgomery and Vaughan \cite{MoVa75} to study the exceptional set in the binary Goldbach problem. Thus, the value of $\delta$ in Bauer's result is at the same time quite small and rather difficult to estimate explicitly. In 1998, Liu and Zhan \cite{LiZh98} made an important breakthrough in the study of the major arcs in the Waring--Goldbach problem, which allowed them, among other things, to give a simpler proof of Bauer's result with the explicit value $\delta = 0.04$. Their work was followed by a series of successive improvements on the value of $\delta$:
\begin{quote}
  \begin{tabular}{lcl}
    Liu and Zhan \cite{LiZh00}&       & $\delta = 0.0434...$ \\
    Bauer \cite{Baue05}&                & $\delta = 0.0447...$ \\
    L\"{u} \cite{Lv05} & \quad           & $\delta = 0.0571...$ \\
    Bauer and Wang \cite{BaWa06}&     & $\delta = 0.0642...$ \\
    L\"{u} \cite{Lv06}&                  & $\delta = 0.0714...$ \\
    Liu, L\"{u} and Zhan \cite{LiLvZh06}& & $\delta = 0.1$.
  \end{tabular}
\end{quote}
Furthermore, Meng \cite{Meng06} showed that $\delta = 3/29 = 0.1034...$ is admissible under GRH. Our first theorem improves further on these results by adding $\delta = 1/9 = 0.111...$ to the above list.

\begin{thm}\label{thm1}
  All sufficiently large integers $n \in \mathcal H_5$ can be represented in the form \eqref{eq1.2} with $s=5$ and $H = n^{4/9 + \eps}$ for any fixed $\eps > 0$.
\end{thm}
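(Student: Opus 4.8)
The plan is to prove Theorem~\ref{thm1} by the Hardy--Littlewood circle method, using a Harman-style sieve to handle the prime variables on the minor arcs. Write $x = (n/5)^{1/2}$, so that $H = n^{4/9+\eps} \asymp x^{8/9}$; put $e(t) = \rme^{2\pi\rmi t}$,
\[
  f(\alpha) = \sum_{x-H \le p \le x+H} (\log p)\,e(\alpha p^{2}), \qquad R(n) = \int_{0}^{1} f(\alpha)^{5}\,e(-n\alpha)\,d\alpha ,
\]
so that $R(n)$ is a logarithmically weighted count of the solutions of \eqref{eq1.2} with $s=5$, and it suffices to show $R(n) \gg H^{4}x^{-1}(\log x)^{-5}$ for all large $n \in \mathcal H_{5}$. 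I would dissect $[0,1)$ by Dirichlet's theorem into major arcs $\mathfrak M$, the union of the intervals $|\alpha - a/q| \le P_{0}(xH)^{-1}$ with $1 \le a \le q \le P_{0}$, $(a,q)=1$ (here $P_{0}$ is a fixed large power of $\log x$), and the complementary minor arcs $\mathfrak m$. The width $(xH)^{-1}$ is the one appropriate to the short interval, since $v(\beta) := \sum_{|m-x|\le H} e(\beta m^{2})$ has size $\asymp H$ only for $|\beta| \ll (xH)^{-1}$ and decays like $H(|\beta|xH)^{-1/2}$ beyond that point.

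On $\mathfrak M$ I would run the now-standard major-arc analysis for the almost-equal Waring--Goldbach problem, following Liu and Zhan \cite{LiZh98} and its successors. Approximating $f(a/q+\beta)$ by $\phi(q)^{-1}C(q,a)\,v(\beta)$, where $C(q,a)$ is a Gauss-type sum, and then summing over $a,q$ and integrating in $\beta$, one obtains
\[
  \int_{\mathfrak M} f(\alpha)^{5}\,e(-n\alpha)\,d\alpha = \mathfrak S(n)\,\mathfrak J(n)\,\bigl(1+o(1)\bigr),
\]
where $\mathfrak S(n)$ is the singular series, with $\mathfrak S(n) \gg 1$ exactly when $n \in \mathcal H_{5}$ (this is the role of $n \equiv 5 \mpmod{24}$), and $\mathfrak J(n) = \int v(\beta)^{5}e(-n\beta)\,d\beta \asymp H^{4}x^{-1}$. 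The inputs are the prime number theorem in short intervals and in arithmetic progressions, log-free zero-density estimates for Dirichlet $L$-functions, and the Deuring--Heilbronn phenomenon to dispose of a possible Siegel zero; for $\theta = 8/9$ these are comfortably available, so the major arcs present no new difficulty and the restriction $\theta > 8/9$ will come entirely from the minor arcs.

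On the minor arcs I would first treat the part $\mathfrak m_{1}\subset\mathfrak m$ on which $q \le P_{0}$ (and hence $|\beta| > P_{0}(xH)^{-1}$): there the same approximation $f(a/q+\beta)\approx\phi(q)^{-1}C(q,a)v(\beta)$ together with the decay of $v$ bounds $\int_{\mathfrak m_{1}}|f|^{5}$ by $O\bigl(H^{4}x^{-1}P_{0}^{-3/2}\bigr) = o\bigl(H^{4}x^{-1}(\log x)^{-5}\bigr)$. On the remaining minor arcs $\mathfrak m_{2}$ (where $q>P_{0}$) I would peel off one factor and use a fourth moment,
\[
  \int_{\mathfrak m_{2}} |f(\alpha)|^{5}\,d\alpha \le \Bigl(\sup_{\alpha\in\mathfrak m_{2}}|f(\alpha)|\Bigr)\int_{0}^{1}|f(\alpha)|^{4}\,d\alpha .
\]
The fourth moment counts, with logarithmic weights, the solutions of $p_{1}^{2}+p_{2}^{2}=p_{3}^{2}+p_{4}^{2}$ with all $p_{j}\in[x-H,x+H]$: the diagonal contributes $\asymp H^{2}(\log x)^{2}$, while for the off-diagonal one writes $m$ for the common value of the two sums, notes that $m$ has $\le r_{2}(m)\ll m^{\eps}\ll x^{\eps}$ ordered representations as a sum of two squares, and observes that there are only $\ll H^{2}$ admissible pairs in all, so the off-diagonal count is $\ll x^{\eps}H^{2}$; hence $\int_{0}^{1}|f|^{4}\,d\alpha \ll H^{2+\eps}$. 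Since $n \asymp x^{2}$, $H^{4}x^{-1}$ is a shade larger than $H^{23/8}$, so --- the powers of $\log x$ from the fourth moment and the major-arc error being absorbed by the $\eps$ in the exponent of $H$ --- the problem reduces to the pointwise bound
\[
  \sup_{\alpha\in\mathfrak m_{2}}|f(\alpha)| \ll H^{7/8-\eps},
\]
a saving of one eighth of a power of $H$ over the trivial estimate $|f(\alpha)| \le \sum_{|p-x|\le H}\log p \asymp H$.

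This last estimate is the crux, and the step I expect to be the main obstacle. I would obtain it by Weyl-differencing the quadratic argument, which reduces $f(\alpha)$ to linear exponential sums $\sum_{p}e(\beta p)$ over primes in short intervals, and then detecting the primes not by Vaughan's identity --- whose rigid ranges yield only the weaker saving sufficient for $\theta > 9/10$, as in Liu, L\"{u} and Zhan \cite{LiLvZh06} --- but by a Harman-style sieve decomposition of the characteristic function of the primes into Type~I sums and bilinear Type~II sums. The sieve must be arranged so that every Type~II sum that arises has its two variables confined to ranges in which the associated short-interval exponential sum admits a non-trivial bound from van der Corput's method (exponent pairs); securing this in all the awkward intermediate ranges, uniformly over the relevant $q$, is where the real work lies, and it is precisely this flexibility --- not available to the cruder identities --- that lowers the admissible exponent to $\theta > 8/9$. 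Granting the bound, combining the major-arc asymptotic with $\int_{\mathfrak m}|f|^{5}\,d\alpha = o\bigl(H^{4}x^{-1}(\log x)^{-5}\bigr)$ gives $R(n) = \mathfrak S(n)\mathfrak J(n)\bigl(1+o(1)\bigr) \gg H^{4}x^{-1}(\log x)^{-5} > 0$ for all sufficiently large $n \in \mathcal H_{5}$, so every such $n$ has a representation of the form \eqref{eq1.2} with $s=5$ and $H = n^{4/9+\eps}$, which is Theorem~\ref{thm1}.
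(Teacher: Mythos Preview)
Your proposal has a genuine gap at the minor-arc step. The pointwise bound $\sup_{\alpha\in\mathfrak m_2}|f(\alpha)|\ll H^{7/8-\eps}$ that you need is equivalent to $|f_0(\alpha)|\ll x^{\theta-\sigma+\eps}$ with $\sigma>1-\theta$ at $\theta=8/9$, and this is not known and is not what a Harman-style sieve delivers. Harman's sieve does not give an \emph{exact} decomposition of $\varpi$ into Type~I/II pieces each of which admits the required exponential-sum bound; it gives Buchstab decompositions in which certain pieces (in the paper's notation $\gamma_4,\gamma_8,\gamma_{11}$, whose bilinear variable falls outside the admissible Type~II range $[x^{1-\theta+2\sigma},x^{\theta-4\sigma}]$) can only be \emph{discarded by sign}, producing inequalities $\lambda_1-\lambda_2\le\varpi\le\lambda_3$. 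Discarding is useless for bounding $|f_0|$: the bad pieces carry $\asymp H$ terms and their exponential sums have no non-trivial estimate. Exact identities (Vaughan, Heath-Brown, or the result of Liu--Zhan \cite[Theorem~2]{LiZh96}) give at best $\sigma\le(2\theta-1)/8$, and for $s=5$ the condition $\sigma>1-\theta$ then forces $\theta>9/10$ --- precisely the Liu--L\"u--Zhan threshold your argument would recover, not $8/9$.

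The paper's route is genuinely different: Theorem~\ref{thm1} is deduced in a few lines from Theorem~\ref{thm2}, an exceptional-set bound $E_4(X;X^{\theta/2})\ll X^{(16-11\theta)/14+\eps}$ for \emph{four} almost-equal prime squares, by removing one prime and noting that the set $\{n-p^2\}$ is larger than the exceptional set. The four-square setting is exactly what makes the Harman sieve bite: with two prime variables available for replacement by sieve weights one uses $R_4(n)\ge R_4(n;\varpi,\lambda_1)-R_4(n;\lambda_2,\lambda_3)$, and on the minor arcs only $\min(|f_0(\alpha)|,|f_i(\alpha)|)\ll x^{\theta-\sigma+\eps}$ for $i\in\{1,3\}$ is required (hypothesis~(i) in \S\ref{sec2}). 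This holds because either $q\le x^{4\sigma}$, in which case Lemma~\ref{lem4.4} bounds $f_0$ itself, or $q>x^{4\sigma}$, in which case Lemmas~\ref{lem4.1}--\ref{lem4.3} bound the structured sums $f_1,f_3$. The uncontrollable $f_2$ is always paired with a controllable factor in the product $f_0^2f_jf_k$, and this pairing is what pushes $\sigma$ up to $(2\theta-1)/7$ and the threshold down to $8/9$; there is no analogue of it in a direct attack on $\sup|f_0|$.
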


We also obtain bounds for the number of integers $n \in \mathcal H_s$, $s = 3,4$, without representations as sums of $s$ almost equal squares of primes. For $H = o( X^{1/2} )$ and $s = 3, 4$, we define
\begin{equation}\label{i.2}
  E_s(X; H) =
  \#\big\{ n \in \mathcal H_s : |n - X| \le HX^{1/2} \text{ and \eqref{eq1.2} has no solution} \big\}.
\end{equation}
Our interest in $E_s(X; H)$ is twofold. First, a non-trivial bound of the form
\begin{equation}\label{eq1.4}
  E_s(X; X^{\theta/2}) \ll X^{(1+\theta)/2-\Delta},
\end{equation}
for some fixed $\Delta > 0$, implies that almost all integers $n \in \mathcal H_s$ are representable in the form \eqref{eq1.2} with $H = n^{\theta/2}$. Thus, we are interested in bounds of the form \eqref{eq1.4} with $\theta$ as small as possible. Furthermore, given a value of $\theta$ for which we can achieve a bound of the above form, we want to maximize the value of $\Delta$.

When $s = 4$, L\"{u} and Zhai \cite{LvZh09} obtained results in both directions outlined above. First, they proved that \eqref{eq1.4} holds with $s = 4$, $\theta > 0.84$ and some $\Delta = \Delta(\theta) > 0$. Moreover, L\"{u} and Zhai showed that, for $\theta > 0.9$ and some $\eta = \eta(\theta) > 0$,
\begin{equation}\label{eq1.5}
  E_4(X; X^{\theta/2}) \ll X^{\theta/2 - \eta}.
\end{equation}
We remark that this estimate implies the result of Liu, L\"{u} and Zhan \cite{LiLvZh06} on sums of five almost equal squares of primes, because one can combine \eqref{eq1.5} with known results on the distribution of primes in short intervals to deduce a version of Theorem~\ref{thm1} with $H = n^{\theta/2}$. In fact, we use the same observation to deduce Theorem \ref{thm1} from the following theorem, which sharpens and generalizes~\eqref{eq1.5}.

\begin{thm}\label{thm2}
  For any fixed $\theta$ with $8/9 < \theta < 1$ and for any fixed $\eps > 0$, one has
  \begin{equation}\label{eq1.6}
    E_4(X; X^{\theta/2}) \ll X^{(16-11\theta)/14 + \eps}.
  \end{equation}
\end{thm}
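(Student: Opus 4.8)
The plan is to apply the Hardy--Littlewood circle method, following the broad strategy of the cited works of Liu--Zhan and L\"u--Zhai but pushing both the major-arc asymptotics and the minor-arc mean values as far as current technology allows. Write $e(x) = \exp(2\pi\rmi x)$, put $Y = (X/4)^{1/2}$, $H = X^{\theta/2}$, and set $I = [Y - \tfrac12 H,\, Y + \tfrac12 H]$. If $|n - X| \le HX^{1/2}$ then $|(n/4)^{1/2} - Y| \le \tfrac14 H + o(H) < \tfrac12 H$ for $X$ large, so every solution of $n = p_1^2 + \dots + p_4^2$ with $p_1,\dots,p_4 \in I$ automatically satisfies $|p_j - (n/4)^{1/2}| \le H$; hence it suffices to bound the number of $n \in \mathcal H_4$ with $|n - X| \le HX^{1/2}$ for which
\[
 R(n) := \sum_{\substack{m_1^2 + \dots + m_4^2 = n \\ m_1,\dots,m_4 \in I}} \Lambda(m_1)\cdots\Lambda(m_4) \;=\; \int_0^1 f(\alpha)^4 e(-n\alpha)\,d\alpha
\]
vanishes, where $f(\alpha) = \sum_{m \in I}\Lambda(m)e(\alpha m^2)$ (the proper prime powers contributing $O(H^{1/2+\eps})$). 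Heuristically $R(n) \asymp \mathfrak S(n)\mathfrak J(n)$, where $\mathfrak S(n)$ is the singular series for four squares of primes and $\mathfrak J(n) = \int_{\mathbb R}\bigl(\int_I e(\beta t^2)\,dt\bigr)^4 e(-n\beta)\,d\beta$; for $n \in \mathcal H_4$ with $|n - X| \le HX^{1/2}$ one has $\mathfrak S(n) \gg 1$ (from $n \equiv 4\pmod{24}$) and $\mathfrak J(n) \gg H^3 X^{-1/2}$ (since, $\theta$ being $< 1$, such $n$ lie well inside the range of $m_1^2 + \dots + m_4^2$ on $I^4$), so the goal is $R(n) \gg H^3 X^{-1/2}$. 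I would dissect $[0,1)$ into major arcs $\mathfrak M$ (Farey fractions $a/q$ with $q$ up to a power of $X$, with neighbourhoods) and minor arcs $\mathfrak m$, taking $\mathfrak M$ as large as the input permits.

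On $\mathfrak M$, I would invoke the Liu--Zhan machinery for the Waring--Goldbach problem in short intervals: the prime number theorem in short intervals and arithmetic progressions, reinforced by zero-density estimates for Dirichlet $L$-functions, gives $f(a/q + \beta) = \phi(q)^{-1}S^*(q,a)\int_I e(\beta t^2)\,dt + (\text{small error})$ for $q,|\beta|$ in a suitable range, with $S^*(q,a)=\sum_{\substack{r\bmod q\\(r,q)=1}}e(ar^2/q)$. Substituting into $\int_{\mathfrak M}f^4 e(-n\alpha)\,d\alpha$ yields the main term $\mathfrak S(n)\mathfrak J(n)$ with an admissible error, and by the above this is $\gg H^3 X^{-1/2}$ uniformly for every admissible $n$. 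Thus the major arcs produce no exceptional integers, and the whole exceptional set comes from $\mathfrak m$.

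The minor arcs are the heart of the proof. Let $\mathcal E$ be the set of admissible $n$ with $\bigl|\int_{\mathfrak m}f(\alpha)^4 e(-n\alpha)\,d\alpha\bigr| \ge c H^3 X^{-1/2}$. Forming $K(\alpha) = \sum_{n \in \mathcal E}\eta_n e(-n\alpha)$ with unimodular $\eta_n$ selected so that $\sum_{n\in\mathcal E}\eta_n\int_{\mathfrak m}f^4 e(-n\alpha)\,d\alpha = \sum_{n\in\mathcal E}\bigl|\int_{\mathfrak m}f^4 e(-n\alpha)\,d\alpha\bigr|$, and using $\int_0^1 |K|^2\,d\alpha = |\mathcal E|$ with Cauchy--Schwarz, we obtain
\[
 |\mathcal E|\,cH^3 X^{-1/2} \;\le\; \Bigl|\int_{\mathfrak m}f(\alpha)^4 K(\alpha)\,d\alpha\Bigr| \;\le\; \Bigl(\int_{\mathfrak m}|f(\alpha)|^8\,d\alpha\Bigr)^{1/2} |\mathcal E|^{1/2},
\]
so $|\mathcal E| \ll X H^{-6}\int_{\mathfrak m}|f|^8$, and \eqref{eq1.6} would follow from $\int_{\mathfrak m}|f(\alpha)|^8\,d\alpha \ll X^{(31\theta+2)/14 + \eps}$. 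Because the full eighth moment $\int_0^1 |f|^8$ is of size $X^{7\theta/2 - 1/2 + o(1)}$, which exceeds this target as $\theta > \tfrac12$, the minor-arc restriction must be used in an essential way. I would therefore split $\mathfrak m$ dyadically according to the denominator $q$ in Dirichlet's approximation and, on each piece, combine: (i) a Weyl / van der Corput bound for $f$, obtained after decomposing $\Lambda$ by Vaughan's identity (or a Heath-Brown decomposition) into Type I and Type II bilinear forms and using the quadratic phase over the short interval $I$; (ii) fourth- and sixth-moment estimates for $f$, which reduce to bounds for the number of representations of short-interval integers as sums of two or three squares; and (iii), on the part of $\mathfrak m$ abutting $\mathfrak M$, the explicit approximation of $f$ from the major-arc step together with the decay of $\int_I e(\beta t^2)\,dt$ for large $|\beta|$. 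Optimizing this decomposition should deliver the required bound on $\int_{\mathfrak m}|f|^8$. (Equivalently one may bound $\int_{\mathfrak m}f^4 K$ by H\"older using a factor $\sup_{\mathfrak m}|f|$ and lower moments of $f$; this leads to the same exponent.)

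The main obstacle is exactly this minor-arc estimate. The Weyl-type cancellation available for the quadratic exponential sum over primes in the short interval $I$ must be strong enough to overcome the comparatively large fourth and sixth moments of $f$, and this competes against how large one can afford $\mathfrak M$ to be, which is itself limited by the quality of present-day short-interval and zero-density results for primes. Enlarging $\mathfrak M$ à la Liu--Zhan is indispensable here: without it $\mathfrak m$ would still contain wide arcs around small-denominator rationals on which $f$ is of essentially full size. It is precisely the tension between how far $\mathfrak M$ can be pushed and how much can be saved on $\mathfrak m$ that yields the hypothesis $\theta > 8/9$ and the exponent $(16 - 11\theta)/14$ in \eqref{eq1.6}.
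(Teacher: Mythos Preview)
Your outline has a genuine gap: as written it cannot reach the exponent $(16-11\theta)/14$, and in fact it is essentially the argument behind the paper's \emph{weaker} Theorem~3 ($E_4 \ll X^{1-2\sigma+\eps}$), not Theorem~2. The first missing ingredient is Wooley's slim-exceptional-set device. You use only $\int_0^1|K|^2 = |\mathcal E|$ with Cauchy--Schwarz, which gives $|\mathcal E| \ll X H^{-6}\int_{\mathfrak m}|f|^8$. With $x=X^{1/2}$, the inputs $\sup_{\mathfrak m}|f|\ll x^{\theta-\sigma+\eps}$ and $\int_0^1|f|^4\ll x^{2\theta+\eps}$ yield $\int_{\mathfrak m}|f|^8\ll x^{6\theta-4\sigma+\eps}$, and no mixture of fourth, sixth and eighth moments with sup-bounds improves this; hence your route gives only $|\mathcal E|\ll x^{2-4\sigma+\eps}=X^{1-2\sigma+\eps}$. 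Your stated target $\int_{\mathfrak m}|f|^8\ll X^{(31\theta+2)/14+\eps}$ would require $\sigma\ge(11\theta-2)/28$, which for $\theta$ near $1$ exceeds $0.3$---far beyond any available minor-arc saving. The paper instead bounds the mixed moment $\int_0^1|f(\alpha)K(\alpha)|^2\,d\alpha\ll |\mathcal E|x^{\theta+\eps}+|\mathcal E|^2x^{\eps}$ (an additive-energy estimate \`a la Wooley), and feeding this into H\"older yields the sharper $|\mathcal E|\ll x^{2-\theta-2\sigma+\eps}$.

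The second missing ingredient is Harman's sieve. The paper does not apply the circle method to $R(n)$ directly; it first constructs sieve functions $\lambda_1,\lambda_2,\lambda_3$ with $\lambda_3\ge\varpi\ge\lambda_1-\lambda_2$ and works with $R_4(n;\varpi,\lambda_1)-R_4(n;\lambda_2,\lambda_3)$. The point is that the minor-arc hypothesis becomes $\min(|f_0|,|f_i|)\ll x^{\theta-\sigma+\eps}$, and the combinatorial freedom in the $\lambda_i$'s (built from Buchstab decompositions tailored to the bilinear ranges in Lemmas~4.1--4.3) permits $\sigma=(2\theta-1)/7$. Without the sieve, the direct Vaughan/Heath-Brown decomposition of $\Lambda$ together with the Liu--Zhan estimate gives only $\sigma\le(2\theta-1)/8$. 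It is precisely $\sigma=(2\theta-1)/7$ that, inserted into $x^{2-\theta-2\sigma}$, produces $(16-11\theta)/14$, and it is the side condition $\sigma>1-\theta$ (needed so the $|\mathcal E|^2$ term in the Wooley bound is harmless) that forces $\theta>8/9$. The sieve also carries a cost on the major arcs: one must verify that the resulting constant $C(\theta)=\kappa_1-\kappa_2\kappa_3$ is positive, which the paper does numerically via Buchstab-function integrals. Your proposal contains neither device, so the claimed exponent is not accessible by the method you describe.
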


When we embarked on this project, one of our goals was to obtain upper bounds of the form \eqref{eq1.4} that are also relatively sharp in the $\Delta$-aspect. In particular, we were interested in bounds in which $\Delta$ is an increasing function of $\theta$ and which approach the best known bounds for $E_s(X)$ when $\theta \uparrow 1$. In the extreme case $\theta = 1 - \eps$, the exponent in \eqref{eq1.6} becomes $5/14 + 2\eps$, which falls just short of~\eqref{i.0} and matches the strength of the second-best bound for $E_4(X)$ obtained by Harman and Kumchev~\cite{HaKu06}. We remark that the choice $\sigma = (2\theta - 1)/7$ in Section \ref{sec6}, which determines the exponent on the right side of~\eqref{eq1.6}, represents a natural barrier for the methods employed in this work. Thus, Theorem~\ref{thm2} is, in some sense, best possible.

We also obtain a small improvement on the first result of L\"{u} and Zhai \cite{LvZh09}. Our next theorem extends that result in two ways: it reduces the lower bound on $\theta$ to $\theta > 0.82$, and it gives an explicit expression for $\Delta$.

\begin{thm}\label{thm3}
  For any fixed $\theta$ with $0.82 < \theta < 1$ and for any fixed $\eps > 0$, one has
  \[
    E_4(X; X^{\theta/2}) \ll X^{1 - 2\sigma + \eps},
  \]
  where $\sigma = \sigma(\theta) = \min(\theta-31/40, (2\theta-1)/8)$.
\end{thm}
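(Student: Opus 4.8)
The plan is to prove Theorem~\ref{thm3} by the Hardy--Littlewood circle method, with a major-arc treatment resting on the Liu--Zhan theory of primes in short intervals and a minor-arc treatment via Bessel's inequality, the whole argument being reduced to one Weyl-type bound for a quadratic prime exponential sum over a short interval.

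\emph{Setup and dissection.} Put $L = \log X$, let $\sigma = \sigma(\theta)$ be as in the statement, and set $P = X^{\sigma}$, $x \asymp X^{1/2}$, $H = X^{\theta/2}$, and
\[
  f(\alpha) = \sum_{\substack{p \text{ prime}\\ |p - x| \le H}} (\log p)\, e(\alpha p^2).
\]
If $r(n) = \int_0^1 f(\alpha)^4 e(-n\alpha)\,d\alpha$ denotes the logarithmically weighted number of solutions of $n = p_1^2 + \dots + p_4^2$ with $|p_j - x| \le H$, then such solutions satisfy $|p_j - (n/4)^{1/2}| \le 2H$ whenever $|n - 4x^2|$ is small; covering the range $|n - X| \le HX^{1/2}$ by $O(1)$ choices of $x$, it suffices to bound the number of $n \equiv 4 \pmod{24}$ with $|n - 4x^2| \ll xH$ for which $r(n) = 0$. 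Dissect $[0,1)$ into major arcs $\mathfrak M$ --- the intervals $|\alpha - a/q| \le P(xH)^{-1}$ with $(a,q) = 1$ and $q \le P$ --- and the complementary minor arcs $\mathfrak m$.

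\emph{Major arcs.} On $\mathfrak M$ one approximates $f$ using the mean-value and explicit-formula machinery of Liu and Zhan \cite{LiZh98,LiZh00} for the Dirichlet polynomial attached to $f$: for $\alpha = a/q + \beta \in \mathfrak M$ one gets $f(\alpha) = q^{-1}S(q,a)v(\beta) + (\text{small})$, where $S(q,a)$ is the usual complete sum and $v(\beta) = \sum_{|m-x|\le H} e(\beta m^2)$. Substituting, expanding $f^4$, and integrating term by term yields, for all admissible $n$ outside an acceptably small exceptional set,
\[
  \int_{\mathfrak M} f(\alpha)^4 e(-n\alpha)\,d\alpha = \mathfrak S(n)\mathfrak J(n) + O\bigl(H^3 X^{-1/2} L^{-1}\bigr),
\]
where $\mathfrak S(n) \gg 1$ for $n \equiv 4 \pmod{24}$ and the singular integral satisfies $\mathfrak J(n) \asymp H^3 X^{-1/2}$; hence the major arcs contribute $\gg H^3 X^{-1/2} L^{-c}$ for such $n$. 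Controlling the error above when $P$ is as large as our choice of $\sigma$ permits is the point that forces the lower bound $\theta > 0.82$ and, in the range $\theta < 13/15$, prevents us from taking $\sigma$ larger than $\theta - 31/40$.

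\emph{Minor arcs and reduction.} If $r(n) = 0$ then, by the above, $\bigl|\int_{\mathfrak m} f(\alpha)^4 e(-n\alpha)\,d\alpha\bigr| \gg H^3 X^{-1/2} L^{-c}$, so Bessel's inequality on $[0,1)$ gives
\[
  E_4(X; X^{\theta/2})\bigl(H^3 X^{-1/2} L^{-c}\bigr)^2 \ll \int_{\mathfrak m} |f(\alpha)|^8\,d\alpha \le \Bigl(\sup_{\alpha\in\mathfrak m}|f(\alpha)|\Bigr)^{4}\int_0^1 |f(\alpha)|^4\,d\alpha .
\]
The fourth-moment factor is a routine count of the additive energy of $\{p^2 : |p-x|\le H\}$: writing $p_1^2 - p_3^2 = p_4^2 - p_2^2 = k$ and separating $k = 0$ from $k\ne 0$ (where the divisor bound applies) gives $\int_0^1 |f|^4\,d\alpha \ll H^2 X^{\eps}$. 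Combining these, $E_4(X; X^{\theta/2}) \ll H^{-4} X^{1+\eps}\,(\sup_{\mathfrak m}|f|)^4$, so the theorem follows once we establish the minor-arc bound
\[
  \sup_{\alpha\in\mathfrak m}|f(\alpha)| \ll H\,X^{-\sigma/2 + \eps}, \qquad \sigma = \min\bigl(\theta - 31/40,\ (2\theta-1)/8\bigr).
\]

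\emph{Main obstacle.} This minor-arc bound is the heart of the matter. One proves it by Weyl differencing on the variable $p$ --- which turns $|f(\alpha)|^2$ into an average of linear exponential sums over primes in $(x-H, x+H]$ --- followed by a combinatorial identity of Vaughan or Heath--Brown type and the Liu--Zhan short-interval mean-value theorems for Dirichlet polynomials, treating each Dirichlet denominator $q \in (P, X^{1+\eps}xHP^{-1}]$ separately and optimising the cutoff $P$. The difficulty is that when $\theta$ is only a little above $0.8$ the interval of summation has length barely exceeding $X^{0.4}$, so the linear prime sums produced by differencing live on very short ranges and the bilinear pieces coming from the prime-detecting identity must be controlled there; this tension is precisely what caps the savings at $(2\theta-1)/8$ for $\theta \ge 13/15$, at $\theta - 31/40$ below that, and restricts the method to $\theta > 0.82$. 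Everything else --- the major-arc asymptotics, the fourth moment, and the Bessel step --- is by comparison routine.
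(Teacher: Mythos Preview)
Your overall architecture --- circle method with $P = x^{2\sigma}$, major arcs handled by the Liu--Zhan machinery to give the asymptotic formula, minor arcs handled by Bessel's inequality combined with a fourth-moment bound $\int_0^1|f|^4 \ll H^2X^{\eps}$ and a pointwise bound $\sup_{\mathfrak m}|f| \ll HX^{-\sigma/2+\eps}$ --- is exactly the route the paper takes. Two points, however, deserve correction.

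First, a minor one: on the major arcs the asymptotic $\int_{\mathfrak M} f^4 e(-n\alpha)\,d\alpha = \mathfrak S(n)\mathfrak J(n)(1+O(L^{-1}))$ holds for \emph{all} $n$ in the relevant range, not merely outside an exceptional set; the exceptional set comes entirely from the minor arcs. (The constraint $\sigma \le \theta - 31/40$ does arise here, as you say, from the major-arc input --- specifically from mean-value estimates for Dirichlet polynomials over characters.)

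Second, and more seriously, your sketch of the minor-arc bound is not how the saving $(2\theta-1)/8$ is actually obtained. Weyl differencing $|f(\alpha)|^2$ produces sums of the shape $\sum_{p_1,p_2}e(\alpha(p_1^2-p_2^2))$, i.e.\ sums over \emph{pairs} of primes $p_2,\,p_2+h$ with a phase $e(2\alpha h p_2)$; the constraint that both $p_2$ and $p_2+h$ be prime is twin-prime-like and intractable, so this is not ``an average of linear exponential sums over primes.'' The correct route --- the one in Liu--Zhan \cite[Theorem~2]{LiZh96}, refined by Liu--L\"u--Zhan \cite{LiLvZh06} and invoked in the paper as Lemma~4.4 --- applies a Vaughan/Heath--Brown identity \emph{first} to write $f(\alpha)$ as a combination of bilinear sums $\sum_{u\sim U}\sum_{uv\in\mathcal I}\xi_u\eta_v\,e(\alpha u^2v^2)$ with the \emph{quadratic} phase still present, and then estimates these Type~I and Type~II sums directly (Cauchy--Schwarz in the Type~II case, Weyl on the smooth inner variable in the Type~I case). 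That is where the exponents $(2\theta-1)/8$ and $(3\theta-2)/6$, $(10\theta-7)/15$ come from, and where the constraint $\sigma \le (2\theta-1)/8$ enters. Your final exponent is right, but the mechanism you describe would not deliver it.
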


The methods used to establish Theorems \ref{thm2} and \ref{thm3} can also be used to establish the following estimate for $E_3(X; X^{\theta/2})$ which, to the best of our knowledge, is the first result of this type for sums of three almost equal squares of primes.

\begin{thm}\label{thm4}
  For any fixed $\theta$ with $0.85 < \theta < 1$ and for any fixed $\eps > 0$, one has
  \begin{equation}\label{eq1.7}
    E_3(X; X^{\theta/2}) \ll X^{1 - \sigma + \eps},
  \end{equation}
  where $\sigma = \sigma(\theta)$ is the function from Theorem \ref{thm3}. Furthermore, when $8/9 < \theta < 1$, one has
  \[
    E_3(X; X^{\theta/2}) \ll X^{(8-2\theta)/7 + \eps}.
  \]
\end{thm}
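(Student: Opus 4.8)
The plan is to prove Theorem~\ref{thm4} by the circle method, in close parallel with the proofs of Theorems~\ref{thm2} and \ref{thm3}; the only structural change is that there are three variables in place of four. Put $z = (X/3)^{1/2}$ and $I = [z - cH,\, z + cH]$, with $c>0$ a small fixed constant chosen so that $I$ lies inside the interval appearing in \eqref{eq1.2} for every $n$ with $|n - X| \le cHX^{1/2}$, and set
\[
  f(\alpha) = \sum_{m \in I} \Lambda(m)\, e\!\left(\alpha m^2\right), \qquad
  R(n) = \int_0^1 f(\alpha)^3\, e(-n\alpha)\, d\alpha .
\]
Since the contribution to $R(n)$ of triples in which some $m_i$ is a proper prime power is $O\!\left(X^{1/4+\eps}\right)$, every $n$ with $R(n) \gg H^2 X^{-1/2}$ is representable in the form \eqref{eq1.2} with $s=3$; covering the range $|n-X|\le HX^{1/2}$ by $O(1)$ subintervals of length $cHX^{1/2}$ and recentering $I$, it thus suffices to bound the number of $n \in \mathcal H_3$ with $|n-X| \le cHX^{1/2}$ and $R(n) \ll H^2 X^{-1/2}$. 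We dissect $[0,1)$ into major arcs $\mathfrak M$ about the rationals $a/q$ with $q \le Q$ and minor arcs $\mathfrak m = [0,1)\setminus\mathfrak M$, exactly as in Section~\ref{sec6} and in the section establishing Theorem~\ref{thm3}.

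On the major arcs the treatment is the same as for $s = 4$, only simpler: feeding the estimates for sums of $\Lambda(m)$ over short intervals in arithmetic progressions proved earlier (the Liu--Zhan major-arc technology) into the standard computation gives
\[
  \int_{\mathfrak M} f(\alpha)^3\, e(-n\alpha)\, d\alpha = \mathfrak S(n)\,\mathfrak J(n) + O\!\left(H^2 X^{-1/2}(\log X)^{-C}\right)
\]
for every fixed $C>0$, where the singular integral satisfies $\mathfrak J(n) \gg H^2 X^{-1/2}$ for $|n-X| \le cHX^{1/2}$, and the singular series satisfies $\mathfrak S(n) \gg 1$ for $n \in \mathcal H_3$. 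The congruences defining $\mathcal H_3$ --- namely $n \equiv 3 \pmod{24}$ and $5 \nmid n$ --- are precisely the conditions guaranteeing that $n \equiv x_1^2 + x_2^2 + x_3^2$ is solvable modulo every prime power with each $x_i$ coprime to the modulus (obstructions occurring only at $p = 2, 3, 5$), so $\mathfrak S(n)$ does not vanish and the main term above dominates the error.

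The minor arcs are handled by Bessel's inequality: since the functions $\alpha \mapsto e(-n\alpha)$ with $n$ restricted to an interval are orthonormal on $[0,1)$,
\[
  \sum_{\substack{n \in \mathcal H_3\\ |n-X|\le cHX^{1/2}}} \left|\int_{\mathfrak m} f(\alpha)^3 e(-n\alpha)\,d\alpha\right|^{2} \le \int_{\mathfrak m}|f(\alpha)|^{6}\,d\alpha \le \Big(\sup_{\alpha\in\mathfrak m}|f(\alpha)|\Big)^{2}\int_0^1 |f(\alpha)|^{4}\,d\alpha .
\]
Here $\int_0^1 |f(\alpha)|^{4}\,d\alpha \ll H^{2+\eps}$ --- the $\Lambda$-weighted count of solutions of $m_1^2+m_2^2 = m_3^2+m_4^2$ with $m_i \in I$, in which the divisor bound shows the diagonal $\{m_1,m_2\}=\{m_3,m_4\}$ dominates --- while the pointwise minor-arc bound $\sup_{\mathfrak m}|f(\alpha)|^{2} \ll H^2 X^{-\sigma+\eps}$ is exactly the estimate supplied by the lemmas underlying Theorems~\ref{thm2} and \ref{thm3}, with $\sigma = \sigma(\theta) = \min\!\big(\theta-31/40,\,(2\theta-1)/8\big)$. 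Combining with the major-arc estimate, the number of $n \in \mathcal H_3$ in the relevant range with $R(n) \ll H^2 X^{-1/2}$ --- which forces the minor-arc integral to be $\gg H^2 X^{-1/2}$ --- is
\[
  \ll \frac{\big(\sup_{\mathfrak m}|f|\big)^{2}\int_0^1|f|^{4}}{\big(H^2 X^{-1/2}\big)^{2}} \ll \frac{\big(H^2 X^{-\sigma}\big)H^{2}}{H^4 X^{-1}}\,X^{\eps} = X^{1-\sigma+\eps},
\]
which is the first assertion; this is non-trivial precisely when $1-\sigma < (1+\theta)/2$, i.e.\ when $\theta > 0.85$. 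For $8/9 < \theta < 1$ one uses instead the sharper minor-arc bound $\sup_{\mathfrak m}|f(\alpha)|^{2} \ll H^2 X^{-(2\theta-1)/7+\eps}$ available in that narrower range (this is the estimate behind the choice $\sigma = (2\theta-1)/7$ of Section~\ref{sec6}), and the same computation gives $E_3(X;X^{\theta/2}) \ll X^{1-(2\theta-1)/7+\eps} = X^{(8-2\theta)/7+\eps}$.

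The crux, as in Theorems~\ref{thm2} and \ref{thm3}, is the pointwise estimate for $f$ on the minor arcs. Because $f$ ranges over the short interval $I$, of length $\asymp X^{\theta/2}$, and carries the quadratic phase $e(\alpha m^2)$, crude Weyl differencing loses far too much; one must run Harman's sieve (or a Vaughan-type identity) to reduce $f$ to linear (Type~I) and bilinear (Type~II) sums and then estimate the resulting exponential sums over short intervals, calibrating the parameters of the major/minor-arc dissection so as to balance the contributions --- this is where the admissible range of $\theta$ and the value of $\sigma$ are pinned down, and where essentially all of the work lies. A subsidiary, routine, point is to verify that the local factors of $\mathfrak S(n)$ at $2$, $3$ and $5$ stay bounded away from zero throughout $\mathcal H_3$, since sums of three squares of primes carry local obstructions at exactly these primes.
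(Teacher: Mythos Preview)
Your outline for the first part is essentially the paper's own (Bessel on the minor arcs, the major-arc machinery of Proposition~\ref{prop1} adapted to $s=3$), but you gloss over a genuine difficulty in the major-arc step. The major-arc integral does not yield the complete singular series $\mathfrak S(n)$ with error $O(H^2X^{-1/2}L^{-C})$; it yields only the partial sum $\mathfrak S_3(n,P)$ over $q\le P$ (cf.~\eqref{eq7.11}). For $s\ge 4$ one has $A(n;q)\ll q^{-3/2+\eps}$ and the tail $\sum_{q>P}A(n;q)$ is trivially $O(P^{-1/2+\eps})$, but for $s=3$ one only has $A(n;q)\ll q^{-1/2+\eps}$ and no such uniform tail bound is available. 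The paper instead appeals to \cite[Lemma~7]{HaKu10}, which gives $\mathfrak S_3(n,P)\mathfrak I_3(n)\gg x^{2\theta-1}L^{-6}$ for all but $O(X^{1-\sigma+\eps})$ values of $n\in\mathcal H_3$ in the relevant range; this additional exceptional set is absorbed into the final bound, but it is not the routine verification of local solvability at $2,3,5$ that you describe.

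Your treatment of the second part contains a more serious error. There is no pointwise estimate $\sup_{\alpha\in\mathfrak m}|f(\alpha)|\ll H\,X^{-(2\theta-1)/14+\eps}$ for the prime exponential sum; the direct bound (Lemma~\ref{lem4.4} together with \cite[Theorem~2]{LiZh96}) gives only $\sigma\le(2\theta-1)/8$, which is precisely what drives Theorem~\ref{thm3} and the first part of Theorem~\ref{thm4}. Section~\ref{sec6} does not sharpen this to $(2\theta-1)/7$; what it establishes is hypothesis~(i), namely $\min(|f_0(\alpha)|,|f_i(\alpha)|)\ll x^{\theta-\sigma+\eps}$ for the sieve-weighted sums $f_i$, and this is useful only after one has replaced $R_3(n)$ by a lower bound of the shape $R_3(n;\varpi,\lambda_1)-R_3(n;\lambda_2,\lambda_3)$ as in~\eqref{eq2.2}. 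To obtain the exponent $(8-2\theta)/7$ you must therefore run the full sieve construction of $\lambda_1,\lambda_2,\lambda_3$ from Section~\ref{sec6}, set up the three-variable analogues of $F(\alpha),G(\alpha),H(\alpha)$, and repeat the analysis of Sections~\ref{sec3} and~\ref{sec5} (including the verification that $C(\theta)>0$). The paper is explicit on this point: ``To establish the second part of Theorem~\ref{thm4}, we have to make similar adjustments to the proof of Theorem~\ref{thm2}. Leaving the sieve part of the argument unchanged, we then obtain a version of~\eqref{eq1.7} with $\sigma=(2\theta-1)/7$.''
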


Unlike the classical Waring--Goldbach problem, the problem considered in this note remains of interest even when the number of variables exceeds five. Indeed, it will take little effort for the reader familiar with the circle method to realize that the work behind Theorem \ref{thm3} can be used to establish a version of Theorem \ref{thm1} with $s = 8$ and $H = n^{0.41+\eps}$. Similarly, the work behind Theorem~\ref{thm4} can be used to establish a version of Theorem \ref{thm1} for six squares. In fact, it is possible to establish such results for all fixed $s \ge 6$. We state those as the following theorem.

\begin{thm}\label{thm5}
  Let $s \ge 6$ and define
  \[
    \theta_s = \begin{cases}
      (1 + 0.775(s-4))/(s-3) & \text{if } 6 \le s \le 16, \\
      19/24                  & \text{if } s \ge 17.
    \end{cases}
  \]
  All sufficiently large integers $n \in \mathcal H_s$ can be represented in the form \eqref{eq1.2} with  $H = n^{\theta_s/2 + \eps}$ for any fixed $\eps > 0$.
\end{thm}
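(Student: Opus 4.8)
The plan is to deduce Theorem~\ref{thm5} from a circle-method treatment of the representation problem for $s$ variables, following the same strategy by which Theorem~\ref{thm1} is derived from Theorem~\ref{thm2} (and, as indicated in the text, by which the cases $s=6,8$ are obtained from Theorems~\ref{thm3} and~\ref{thm4}). Fix $s\ge 6$ and write $H=n^{\theta/2+\eps}$ with $\theta=\theta_s$. Set $L=(n/s)^{1/2}$, and introduce the exponential sum $f(\alpha)=\sum_{|p-L|\le H}(\log p)e(\alpha p^2)$, so that the number of representations of $n$ in the form~\eqref{eq1.2}, weighted by $\prod\log p_j$, equals $\int_0^1 f(\alpha)^s e(-\alpha n)\,d\alpha$. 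We dissect $[0,1]$ into major arcs $\mathfrak M$ — intervals around rationals $a/q$ with $q\le P$ and $|\alpha-a/q|\le PH^{-1}n^{-1/2}\cdot(\text{small power})$, where $P$ is a suitable power of $\log n$ — and minor arcs $\mathfrak m$. On $\mathfrak M$ the Liu--Zhan machinery for major arcs in the almost-equal Waring--Goldbach problem (the same input used for Theorems~\ref{thm1}--\ref{thm4}) gives the expected main term: $\int_{\mathfrak M} f(\alpha)^s e(-\alpha n)\,d\alpha \sim \mathfrak S(n)\,\mathfrak J(n)$, where $\mathfrak S(n)\gg 1$ for $n\in\mathcal H_s$ and the singular integral satisfies $\mathfrak J(n)\gg H^{s-2}n^{-1/2}$; this is positive and of the right size precisely because $s\ge 3$.

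The crux is the minor-arc bound. Here I would use the pointwise estimate for $f(\alpha)$ on $\mathfrak m$ coming from the theory of exponential sums over primes in short intervals — concretely, a bound of the shape $\sup_{\alpha\in\mathfrak m}|f(\alpha)|\ll H\,n^{-\rho+\eps}$ for an explicit $\rho=\rho(\theta)>0$, obtained by combining Vinogradov-type / van der Corput estimates for $\sum_{|m-L|\le H} e(\alpha m^2)$ with a sieve (Harman's method) or Vaughan-type decomposition to pass from $m$ to primes $p$. One then bounds
\[
  \int_{\mathfrak m} |f(\alpha)|^s\,d\alpha \le \Big(\sup_{\mathfrak m}|f(\alpha)|\Big)^{s-4}\int_0^1 |f(\alpha)|^4\,d\alpha,
\]
and for the fourth-moment integral one invokes the mean-value estimate $\int_0^1|f(\alpha)|^4\,d\alpha\ll H^2 n^{\eps}$, which is essentially the statement (in a two-variable, almost-equal form) underlying the $s=4$ results of Theorems~\ref{thm2}--\ref{thm3}: it counts solutions of $p_1^2+p_2^2=p_3^2+p_4^2$ with all $p_i$ in the short interval $[L-H,L+H]$, and the diagonal contributes $\ll H^2$ while the off-diagonal is controlled by a divisor argument. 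Combining, the minor-arc contribution is $\ll H^{s-2}\,n^{-(s-4)\rho+\eps}$, which is smaller than the major-arc main term $\gg H^{s-2}n^{-1/2}$ as soon as $(s-4)\rho(\theta_s)>1/2$.

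The remaining work is purely bookkeeping: one must check that the definition of $\theta_s$ is exactly what makes the inequality $(s-4)\rho(\theta_s)>1/2$ hold. For $6\le s\le 16$ the formula $\theta_s=(1+0.775(s-4))/(s-3)$ should be the value at which the relevant exponent $\rho(\theta)$ — linear in $\theta$ on the pertinent range, as in Theorems~\ref{thm3} and~\ref{thm4} where $\sigma=\min(\theta-31/40,(2\theta-1)/8)$ appears — meets the threshold $1/(2(s-4))$; the two competing linear pieces in $\sigma(\theta)$ explain the piecewise behavior, and for $s\ge 17$ the binding constraint stabilizes at $\theta=19/24$, the point where the $\theta-31/40$ branch and the $(2\theta-1)/8$ branch of $\sigma$ coincide (indeed $19/24-31/40 = (2\cdot 19/24-1)/8$). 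I would therefore (i) state the major-arc asymptotic as a lemma, citing Liu--Zhan and the earlier sections; (ii) state the minor-arc pointwise bound $|f(\alpha)|\ll H n^{-\rho(\theta)+\eps}$ with the explicit $\rho$, again as reused from the proofs of Theorems~\ref{thm2}--\ref{thm4}; (iii) state the fourth-moment bound; and (iv) optimize, verifying the arithmetic that produces $\theta_s$. The main obstacle is step (ii): extracting a clean, explicit pointwise minor-arc bound valid uniformly in $\alpha\in\mathfrak m$ for all $\theta>\theta_s$ — but since the sieve and exponential-sum estimates needed are exactly those already developed for Theorems~\ref{thm2}--\ref{thm4}, the proof amounts to re-running that analysis with $s-4$ extra factors of the sup bound rather than proving anything new.
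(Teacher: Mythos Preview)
Your architecture is the paper's: circle method, a pointwise minor-arc bound, the fourth-moment mean value, and a generalization of Proposition~\ref{prop1} on the major arcs. But several of the quantitative claims are off in ways that break the optimization.

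First, the major arcs cannot have $P$ a power of $\log n$. In this short-interval problem one takes $P=x^{2\sigma-\eps}$ (a genuine power of $x$, see \eqref{eq2.4}); the enlarged major arcs are then handled by the Liu--Zhan machinery (Lemmas~\ref{lem4.5}--\ref{lem4.7}), and only with such $P$ does the minor-arc pointwise bound \eqref{eq7.3} hold. Second, the singular integral satisfies $\mathfrak J_s(n)\asymp H^{s-1}n^{-1/2}$, not $H^{s-2}n^{-1/2}$: the constraint $p_1^2+\dots+p_s^2=n$ cuts one dimension from the box $[L-H,L+H]^s$. With this correction, comparing the minor-arc bound $(Hn^{-\rho})^{s-4}\cdot H^{2+\eps}$ against the main term $\gg H^{s-1}n^{-1/2}$ gives the condition $(s-4)\rho>(1-\theta)/2$, i.e.\ in the paper's variables $(s-4)\sigma>1-\theta$ (eq.~\eqref{eq7.10}), not $(s-4)\rho>1/2$.

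Third, the bookkeeping for $\theta_s$ is misdiagnosed. Throughout the relevant range $\theta\le 0.85$ the binding piece of $\sigma(\theta)=\min(\theta-31/40,(2\theta-1)/8)$ is $\theta-31/40$ (the two branches meet at $\theta=13/15$, not $19/24$, and your displayed identity $19/24-31/40=(2\cdot 19/24-1)/8$ is false). Plugging $\sigma=\theta-31/40$ into $(s-4)\sigma>1-\theta$ and solving gives exactly $\theta>(1+0.775(s-4))/(s-3)$ for $6\le s\le 16$. The floor $\theta_s=19/24$ for $s\ge 17$ has nothing to do with the two branches of $\sigma$; it is the lower bound on $\theta$ required by the major-arc Proposition~\ref{prop1} (ultimately Lemma~\ref{lem4.7}, via Huxley's zero-density estimate), which becomes the binding constraint once the minor-arc formula would allow $\theta<19/24$.
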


There is an important difference between Theorems \ref{thm3}--\ref{thm5}, on one hand, and Theorems \ref{thm1} and \ref{thm2} (and most other modern results on the Waring--Goldbach problem), on the other. New knowledge of the distribution of primes in short intervals or in arithmetic progressions will have no immediate effect on the quality of Theorems \ref{thm1} and \ref{thm2} (though it may lead to somewhat simpler proofs). In contrast, the lower bounds for $\theta$ in Theorems \ref{thm3} and \ref{thm4} and the choice of $\theta_s$ in Theorem \ref{thm5} are tied directly to results from multiplicative number theory.

\begin{notation}
  Throughout the paper, the letter $\varepsilon$ denotes a sufficiently small positive real number. Any statement in which $\varepsilon$ occurs holds for each positive $\varepsilon$, and any implied constant in such a statement is allowed to depend on $\varepsilon$. The letter $p$, with or without subscripts, is reserved for prime numbers; $c$ denotes an absolute constant, not necessarily the same in all occurrences. As usual in number theory, $\mu(n)$, $\phi(n)$ and $\tau(n)$ denote, respectively, the M\"obius function, the Euler totient function and the number of divisors function. Also, if $n \in \mathbb N$ and $z \ge 2$, we define
  \begin{equation}\label{eq1.8}
    \psi(n,z)= \begin{cases}
      1 & \text{if $n$ is divisible by no prime $p < z$,} \\
      0 & \text{otherwise.}
    \end{cases}
  \end{equation}
  It is also convenient to extend the function $\psi(n, z)$ to all real $n \ge 1$ by setting $\psi(n,z) = 0$ for $n \notin \mathbb Z$. We write $\mathrm e(x)=\exp(2\pi\rmi x)$ and $(a,b)=\gcd(a,b)$, and we use $m \sim M$ as an abbreviation for the condition $M\le m<2M$. We use $\chi(n)$ to denote Dirichlet characters and set $\delta_{\chi} = 1$ or $0$ according as $\chi$ is principal or not. The sums $\sum\limits_{\chi \bmod q}$ and $\mathop{\sum^*}\limits_{\chi \bmod q}$ denote summations over all the characters modulo $q$ and over the primitive characters modulo $q$, respectively.
\end{notation}

\section{Outline of the method}
\label{sec2}

We shall focus on the proof of Theorem \ref{thm2}. Let $\mathcal Z$ denote the set of integers counted by $E_4(X; X^{\theta/2})$. We set
\[
  x = \sqrt{X/4}, \quad \mathcal I = \big( x-x^{\theta}, x + x^{\theta} \big],
\]
and we consider the sum
\[
  R_4(n) = \sum_{\substack{ m_1^2 + \dots + m_4^2 = n\\ m_i \in \mathcal I}} \varpi(m_1) \cdots \varpi(m_4),
\]
where $\varpi$ is the characteristic function of the prime numbers. We note that $R_4(n) = 0$ for all $n \in \mathcal Z$. All prior work on the problem uses the Hardy--Littlewood circle method to analyze $R_4(n)$ (or similar quantities) and to derive bounds for $E_s(X; X^{\theta/2})$. In contrast, we apply the circle method to a related sum, which we construct using Harman's ``alternative sieve'' method \cite[Ch. 3]{Harm07}. Suppose that we have arithmetic functions $\lambda_1, \lambda_2$ and $\lambda_3$ such that, for $m \in \mathcal I$,
\begin{equation}\label{eq2.1}
  \lambda_3(m) \ge \varpi(m) \ge \lambda_1(m) - \lambda_2(m), \quad \lambda_2(m) \ge 0.
\end{equation}
Then
\begin{equation}\label{eq2.2}
  R_4(n) \ge R_4(n; \varpi, \lambda_1) - R_4(n; \lambda_2, \lambda_3),
\end{equation}
where
\[
  R_4(n; \lambda, \nu) = \sum_{\substack{ m_1^2 + \dots + m_4^2 = n\\ m_i \in \mathcal I}} \varpi(m_1) \varpi(m_2) \lambda(m_3)\nu(m_4).
\]
It is convenient to set $\lambda_0 = \varpi$, as we can then work simultaneously with the two sums on the right side of \eqref{eq2.2}.

We apply the circle method to $R_4(n; \lambda_j, \lambda_k)$, $0 \le j,k \le 3$. We have
\begin{equation}\label{eq2.3}
  R_4(n; \lambda_j, \lambda_k) = \int_0^1 f_0(\alpha)^2f_j(\alpha)f_k(\alpha) \mathrm e(-\alpha n) \, d\alpha,
\end{equation}
where
\[
  f_i(\alpha)= f(\alpha; \lambda_i), \quad
  f(\alpha; \lambda) = \sum_{m \in \mathcal{I}} \lambda(m)\mathrm e(\alpha m^2).
\]
Suppose that $\sigma$ is a fixed real number, with $0 < \sigma < \theta/2$. We set
\begin{equation}\label{eq2.4}
  P = x^{2\sigma - \eps}, \quad Q = x^{2\theta}P^{-1}, \quad L = \log x,
\end{equation}
and define
\[
  \mathfrak M(q,a) = \big\{ \alpha \in \mathbb R : |q\alpha-a| \le Q^{-1} \big\}.
\]
The sets of major and minor arcs in the application of the circle method are given, respectively, by
\begin{equation}\label{eq2.5}
  \mathfrak M = \bigcup_{\substack{ 1 \le a \le q \le P\\ (a,q) = 1}} \mathfrak M(q,a)
  \quad \text{and} \quad \mathfrak m = \big[ Q^{-1}, 1 + Q^{-1} \big] \setminus \mathfrak M.
\end{equation}

The most difficult part of the evaluation of the integral in \eqref{eq2.3} is the estimation of the contribution from the minor arcs. In fact, we can control that contribution only on average over $n$. We write
\begin{gather*}
  F(\alpha) = f_0(\alpha)^2(f_0(\alpha)f_1(\alpha) - f_2(\alpha)f_3(\alpha)), \quad Z(\alpha) = \sum_{n \in \mathcal Z} \mathrm e(-\alpha n).
\end{gather*}
In Section \ref{sec3}, we show that
\begin{equation}\label{eq2.6}
  \int_{\mathfrak m} F(\alpha)Z(\alpha) \, d\alpha \ll
  x^{\theta-\sigma + \eps}\big( |\mathcal Z|^{1/2}x^{3\theta/2} + |\mathcal Z|x^{\theta} \big),
\end{equation}
whenever $\lambda_1$ and $\lambda_3$ satisfy the following hypothesis:
\begin{enumerate}
  \item [(i)] $\min(|f_0(\alpha)|, |f_i(\alpha)|) \ll x^{\theta - \sigma + \eps}$ whenever $\alpha \in \mathfrak m$.
\end{enumerate}
In Section \ref{sec6}, we construct sieve functions $\lambda_1, \lambda_2$ and $\lambda_3$ satisfying \eqref{eq2.1} and such that this hypothesis holds for $\lambda_1$ and $\lambda_3$.

Since the $\lambda_i$'s constructed in Section \ref{sec6} have arithmetic properties similar to $\varpi$, one expects that one should be able to evaluate the contribution of the major arcs to right side of \eqref{eq2.3} by a variant of the methods in Liu and Zhan \cite[Ch. 6]{LiuZhan}. However, for technical reasons (see \eqref{eq5.4} below), we need to modify slightly the sums $f_i(\alpha)$, $i \ge 1$, on the major arcs before we can apply those methods. When $\alpha \in \mathfrak M$, we shall decompose $f_i(\alpha)$ as
\begin{equation}\label{eq2.7}
  f_i(\alpha) = g_i(\alpha) + h_i(\alpha),
\end{equation}
with $h_i(\alpha)$ satisfying
\begin{equation}\label{eq2.8}
  h_i(\alpha) \ll x^{\theta - \sigma}.
\end{equation}
We then define the generating functions
\[
  G(\alpha) = f_0(\alpha)^2(f_0(\alpha)g_1(\alpha) - g_2(\alpha)g_3(\alpha)), \quad
  H(\alpha) = F(\alpha) - G(\alpha).
\]
In Section \ref{sec5}, we use the properties of the sieve weights $\lambda_1, \lambda_2$ and $\lambda_3$ to show that if $n \in \mathcal Z$, then
\begin{equation}\label{eq2.9}
  \int_{\mathfrak M} G(\alpha)\mathrm e(-\alpha n) \, d\alpha = K_n x^{3\theta-1}L^{-4}\big( C + O\big( L^{-1} \big)\big),
\end{equation}
where $1 \ll K_n \ll L$ and $C = C(\theta, \sigma)$ is a certain numerical constant related to the construction of the $\lambda_i$'s. Hence, on the assumption that $C > 0$, we have
\begin{equation}\label{eq2.10}
  \int_{\mathfrak M} G(\alpha)Z(\alpha) \, d\alpha \gg |\mathcal Z|x^{3\theta-1}L^{-4}.
\end{equation}
Furthermore, using \eqref{eq2.8} and a variant of the argument leading to \eqref{eq2.6}, we establish the bound
\begin{equation}\label{eq2.11}
  \int_{\mathfrak M} H(\alpha)Z(\alpha) \, d\alpha \ll
  x^{\theta-\sigma + \eps}\big( |\mathcal Z|^{1/2}x^{3\theta/2} + |\mathcal Z|x^{\theta} \big).
\end{equation}

Since $R_4(n) = 0$ when $n \in \mathcal Z$, we obtain from \eqref{eq2.2} that
\[
  \int_0^1 F(\alpha)Z(\alpha) \, d\alpha \le 0.
\]
Combining this inequality and \eqref{eq2.10}, we obtain
\begin{equation}\label{eq2.12}
  |\mathcal Z|x^{3\theta-1}L^{-4} \ll
  \bigg| \int_{\mathfrak m} F(\alpha)Z(\alpha) \, d\alpha + \int_{\mathfrak M} H(\alpha)Z(\alpha) \, d\alpha \bigg|.
\end{equation}
Hence, under the assumptions that $C > 0$ and $\sigma > 1 - \theta$, it follows readily from \eqref{eq2.6}, \eqref{eq2.11} and \eqref{eq2.12} that
\[
  |\mathcal Z| \ll x^{2-\theta-2\sigma+\eps}.
\]
To complete the proof of Theorem \ref{thm2}, we show that $C(\theta, \sigma) > 0$ when $8/9 < \theta < 1$ and $\sigma = (2\theta - 1)/7$.

\section{Proof of Theorem \ref{thm2}: Minor arc estimates}
\label{sec3}

In this section, we assume that the exponential sums $h_i(\alpha)$ satisfy \eqref{eq2.8} and the sieve coefficients $\lambda_1$ and $\lambda_3$ satisfy hypothesis (i) in Section \ref{sec2}, and we deduce inequalities \eqref{eq2.6} and \eqref{eq2.11}.

First, we consider \eqref{eq2.6}. Since the functions $\lambda_i$ which we construct in Section \ref{sec6} are bounded, a comparison of the underlying Diophantine equations yields
\begin{equation}\label{eq3.1}
  I_i = \int_0^1 |f_i(\alpha)|^4 \, d\alpha \ll \int_0^1 \bigg| \sum_{m \in \mathcal I} \mathrm e(\alpha m^2) \bigg|^4 \, d\alpha
  \ll x^{2\theta + \eps},
\end{equation}
where the last inequality follows from \cite[Lemma 4.1]{LiWu08}. Also, using an idea of Wooley \cite{Wool02} (see Liu and Zhan \cite[eq. (8.31)]{LiuZhan}), we have
\begin{equation}\label{eq3.2}
  I_* = \int_0^1 |f_0(\alpha)Z(\alpha)|^2 \, d\alpha \ll |\mathcal Z|x^{\theta+\eps} + |\mathcal Z|^2x^{\eps}.
\end{equation}
Hence, assuming hypothesis (i) for $\lambda_1$ and $\lambda_3$, we can apply H\"older's inequality to show that
\begin{align}
  \int_{\mathfrak m} |F(\alpha)Z(\alpha)| \, d\alpha
  &\ll x^{\theta-\sigma+\eps}I_*^{1/2} \big( I_0^{1/2} + (I_0I_1)^{1/4} + (I_0I_2)^{1/4} + (I_2I_3)^{1/4} \big) \notag \\
  &\ll x^{\theta-\sigma + \eps}\big( |\mathcal Z|^{1/2}x^{3\theta/2+\eps} + |\mathcal Z|x^{\theta+\eps} \big). \label{eq3.3}
\end{align}

The proof of \eqref{eq2.11} is similar. We have
\[
  H(\alpha) = f_0(\alpha)^3h_1(\alpha) + f_0(\alpha)^2(h_2(\alpha)f_3(\alpha) + g_2(\alpha)h_3(\alpha)),
\]
so we may use \eqref{eq2.8}, \eqref{eq3.1}, \eqref{eq3.2} and H\"older's inequality in a similar fashion to \eqref{eq3.3} to establish \eqref{eq2.11}. The only difference is that in the process we also need a bound for the fourth moment of $g_2(\alpha)$ on the major arcs. Using \eqref{eq2.4}, \eqref{eq2.8} and \eqref{eq3.1}, we obtain
\[
  \int_{\mathfrak M} |g_2(\alpha)|^4 \, d\alpha
  \ll \int_0^1 |f_2(\alpha)|^4 \, d\alpha + x^{4\theta - 4\sigma}|\mathfrak M| \ll x^{2\theta + \eps},
\]
which suffices to complete the proof of \eqref{eq2.11}.

\section{Exponential sum estimates}
\label{sec4}

\subsection{Minor arc estimates}
\label{sec4.1}

In this section, we gather the exponential sum estimates needed to establish that the sieve functions satisfy hypothesis (i). Let $\theta$ and $\sigma$ be fixed real numbers, with $2/3 < \theta < 1$ and $0 < \sigma < (3\theta - 2)/6$, and define
\begin{equation}\label{eq4.1}
  Q_0 = x^{3\theta-1-4\sigma}.
\end{equation}
We use Dirichlet's theorem on Diophantine approximation to approximate each real $\alpha$ by a rational number $a/q$, $a, q \in \mathbb Z$, such that
\begin{equation}\label{eq4.2}
  1 \le q \le Q_0, \quad (a,q)=1, \quad \left|\alpha - \frac{a}{q} \right| \le \frac 1{qQ_0}.
\end{equation}
The first two lemmas follow from Propositions A and B in Liu and Zhan \cite{LiZh96} on account of \eqref{eq4.1} and \eqref{eq4.2}.

\begin{lemma}\label{lem4.1}
  Let $2/3 < \theta < 1$ and $0 < \sigma < (2\theta - 1)/6$, and suppose that $\alpha$ has a rational approximation $a/q$ satisfying \eqref{eq4.2} with $q \ge x^{4\sigma}$. Suppose also that the coefficients $\xi_u, \eta_v$ satisfy $|\xi_u| \le \tau(u)^c, |\eta_v| \le \tau(v)^c$. Then
  \[
    \sum_{u \sim U}\sum_{uv \in \mathcal I} \xi_u\eta_v \mathrm e ( \alpha u^2v^2 )
    \ll x^{\theta-\sigma+\varepsilon},
  \]
  provided that
  \[
    x^{1-\theta+2\sigma} \le U \le x^{\theta-4\sigma}.
  \]
\end{lemma}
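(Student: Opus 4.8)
The lemma is a Type~II (bilinear) exponential sum estimate, and the natural route is the classical one going back to Liu and Zhan: use Cauchy--Schwarz to pass to a quadratic exponential sum, estimate that sum via the rational approximation \eqref{eq4.2}, and keep careful track of the effective modulus. I would first apply the Cauchy--Schwarz inequality to the outer sum over $u\sim U$, absorbing the coefficients $\xi_u$ at the cost of a factor $\bigl(\sum_{u\sim U}\tau(u)^{2c}\bigr)^{1/2}\ll U^{1/2}x^{\eps}$. This reduces the claim to showing that $U\,T\ll x^{2\theta-2\sigma+\eps}$, where
\[
  T:=\sum_{u\sim U}\biggl|\sum_{uv\in\mathcal I}\eta_v\,\mathrm e(\alpha u^2v^2)\biggr|^2 .
\]
The trivial bound here is $T\ll\bigl(x^{\theta}+x^{2\theta}U^{-1}\bigr)x^{\eps}$, and since $U\le x^{\theta-4\sigma}$ takes care of the first term, the real task is to save a factor $x^{2\sigma}$ in the off-diagonal part of $T$.

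Expanding the square in $T$, the diagonal terms $v_1=v_2$ contribute $\ll\sum_{m\in\mathcal I}\tau(m)^{2c+1}\ll x^{\theta+\eps}$, which is admissible because $\sigma<(2\theta-1)/6$ forces $x^{\theta}U\ll x^{2\theta-2\sigma+\eps}$ across the whole range of $U$. For the off-diagonal part I would interchange summation so that the inner sum becomes a quadratic exponential sum $\sum_{u\in J}\mathrm e(\alpha k u^2)$ with $k=v_1^2-v_2^2\neq0$ and $J\subseteq[U,2U)$ a subinterval of length $N\ll x^{\theta-1}U$ cut out by the conditions $uv_1,uv_2\in\mathcal I$ (in particular $v_1,v_2\asymp V:=x/U$ and $|v_1-v_2|\ll x^{\theta}U^{-1}$, so $|k|\ll x^{1+\theta}U^{-2}$). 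Writing $\alpha=a/q+\lambda$ with $1\le q\le Q_0$ and $|\lambda|\le(qQ_0)^{-1}$ from \eqref{eq4.2}, and setting $d=(k,q)$, one has $\alpha k=a'/q'+\lambda k$ with $q'=q/d$ and $(a',q')=1$; the constraint $U\ge x^{1-\theta+2\sigma}$ gives $|k|\ll Q_0$ by \eqref{eq4.1}, which keeps the error $|\lambda k|$ small enough that $a'/q'$ is a genuine rational approximation to $\alpha k$, while $q\ge x^{4\sigma}$ forces $q'=q/d$ to be large unless $d$ is large. The standard Weyl/van der Corput bound for a quadratic exponential sum over an interval of length $N$ then yields $\sum_{u\in J}\mathrm e(\alpha k u^2)\ll\bigl(Nq'^{-1/2}+N^{1/2}+q'^{1/2}\bigr)x^{\eps}$.

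It remains to sum this over the $\ll V\cdot x^{\theta}U^{-1}$ admissible pairs $(v_1,v_2)$, which I would organize according to the value of $d=(k,q)$: for fixed $d$ the number of admissible $k$ shrinks by roughly a factor $d$, so the dangerous term $q'^{1/2}=(q/d)^{1/2}$ stays under control, while the main term $Nq'^{-1/2}$ delivers the needed saving since $q'\gg x^{4\sigma}$ typically and $q'^{-1/2}\ll x^{-2\sigma}$. Collecting the estimates gives $U\,T\ll x^{2\theta-2\sigma+\eps}$ and hence the lemma. In the write-up this last step is most cleanly handled by simply observing that the ranges $2/3<\theta<1$, $0<\sigma<(2\theta-1)/6$, $x^{1-\theta+2\sigma}\le U\le x^{\theta-4\sigma}$ together with \eqref{eq4.1}--\eqref{eq4.2} match the hypotheses of Proposition~A of Liu and Zhan \cite{LiZh96}, and invoking that proposition. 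I expect the main obstacle to be precisely this bookkeeping: correctly relating the modulus of the rational approximation to $\alpha(v_1^2-v_2^2)$ to that of $\alpha$, and distributing the $(v_1,v_2)$-sum by $(k,q)$ so that neither the $q'^{1/2}$ term nor the pairs with $(k,q)$ large spoil the required bound.
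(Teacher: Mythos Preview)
Your proposal is correct and matches the paper's approach: the paper simply states that the lemma follows from Proposition~A of Liu and Zhan \cite{LiZh96} on account of \eqref{eq4.1} and \eqref{eq4.2}, which is exactly what you arrive at in your final paragraph. The Cauchy--Schwarz/Weyl-sum sketch you give is an accurate outline of what underlies that proposition, but for the write-up the bare citation suffices.
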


\begin{lemma}\label{lem4.2}
  Let $2/3 < \theta < 1$ and $0 < \sigma < (3\theta - 1)/6$, and suppose that $\alpha$ has a rational approximation $a/q$ satisfying \eqref{eq4.2} with $q \ge x^{2\sigma}$. Suppose also that the coefficients $\xi_u$ satisfy $|\xi_u| \le \tau(u)^c$. Then
  \[
    \sum_{u \sim U}\sum_{uv \in \mathcal I} \xi_u \mathrm e ( \alpha u^2v^2 )
    \ll x^{\theta-\sigma+\varepsilon},
  \]
  provided that
  \[
    U \le x^{\theta/2 - \sigma}.
  \]
\end{lemma}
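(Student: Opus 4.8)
The plan is to prove this by reducing the double sum to a one-dimensional exponential sum over squares and applying a Weyl-type estimate. First I would observe that since $\xi_u$ depends only on $u$, I can write the sum as $\sum_{u \sim U} \xi_u \sum_{v : uv \in \mathcal I} \mathrm e(\alpha u^2 v^2)$. For each fixed $u \sim U$, the inner sum is an exponential sum of the form $\sum_{v \in \mathcal J_u} \mathrm e(\beta_u v^2)$ with $\beta_u = \alpha u^2$ and $\mathcal J_u = \{ v : uv \in \mathcal I \}$ an interval of length $\asymp x^\theta / u \asymp x^\theta / U$. By trivial estimation on $\xi_u$ (since $|\xi_u| \le \tau(u)^c$ contributes only $x^\eps$ on average) it suffices to show that each inner sum is $\ll x^{\theta - \sigma + \eps}$, or rather that the sum over $u$ of the inner sums is.

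The key step is to control the rational approximation to $\beta_u = \alpha u^2$. Since $\alpha$ has the approximation $a/q$ with $q \ge x^{2\sigma}$ and $|\alpha - a/q| \le (qQ_0)^{-1}$, and since $u \sim U \le x^{\theta/2 - \sigma}$, I would split into cases according to the size of $q$ relative to $U^2$. When $q$ is not too large compared with $U^2$, the number $\alpha u^2$ has a good rational approximation with denominator roughly $q/(q,u^2)$, and one applies the classical Weyl estimate for $\sum_{v \in \mathcal J} \mathrm e(\beta v^2)$, namely that such a sum over an interval of length $V$ is $\ll V(q^*)^{-1/2+\eps} + (q^*)^{1/2+\eps}$ where $q^*$ is the denominator of the approximation to $\beta$ (with the appropriate range condition on $|\beta - a^*/q^*|$). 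Summing the error terms over $u \sim U$, using $\sum_{u} (q,u^2)^{1/2} \ll U^{1+\eps} q^{\eps}$ type bounds, and invoking the constraints $U \le x^{\theta/2 - \sigma}$, $q \ge x^{2\sigma}$, $q \le Q_0 = x^{3\theta - 1 - 4\sigma}$, $\sigma < (3\theta-1)/6$, should yield the bound $x^{\theta - \sigma + \eps}$. When $q$ is large, one instead uses the spacing $|\alpha - a/q|$ directly together with the fact that $V = x^\theta/U \ge x^{\theta/2 + \sigma}$ is long enough to extract savings.

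The main obstacle I anticipate is bookkeeping the interaction between the common factor $(q, u^2)$ and the length $V = x^\theta/U$ of the $v$-interval across the full range of $q$, so that one always has \emph{either} enough cancellation from the Weyl bound \emph{or} a short enough interval to use a trivial bound with room to spare — in other words, verifying that the single condition $U \le x^{\theta/2 - \sigma}$ (combined with the standing hypotheses $2/3 < \theta < 1$ and $\sigma < (3\theta-1)/6$) really does suffice uniformly. In practice this is exactly the content of Proposition B of Liu and Zhan \cite{LiZh96}: with $Q_0$ chosen as in \eqref{eq4.1} and $\alpha$ approximated as in \eqref{eq4.2}, their proposition is tailored to deliver precisely a bound of the shape $x^{\theta - \sigma + \eps}$ for sums $\sum \xi_u \mathrm e(\alpha u^2 v^2)$ in this range, and the lemma follows by checking that the hypotheses $q \ge x^{2\sigma}$ and $U \le x^{\theta/2 - \sigma}$ match the ranges in that proposition. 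So the cleanest route is to quote \cite{LiZh96} directly rather than to reprove the Weyl estimate from scratch; the only real work is confirming that \eqref{eq4.1}--\eqref{eq4.2} put us in the scope of their result.
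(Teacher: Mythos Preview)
Your proposal is correct and lands on exactly the paper's approach: the authors state (just before Lemma~4.1) that Lemmas~4.1 and~4.2 follow directly from Propositions~A and~B of Liu and Zhan~\cite{LiZh96} on account of \eqref{eq4.1} and \eqref{eq4.2}, with no further argument given. Your additional sketch of the underlying Weyl-type mechanism is reasonable background, but the paper simply cites the reference.
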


In the next lemma, we use Lemmas \ref{lem4.1} and \ref{lem4.2} to derive an estimate for a special kind of double exponential sums which arise in applications of Harman's sieve method.

\begin{lemma}\label{lem4.3}
  Let $\psi(v,z)$ be defined by {\rm (\ref{eq1.8})}. Let $2/3 < \theta < 1$ and $0 < \sigma < (3\theta - 2)/6$, and suppose that $\alpha$ has a rational approximation $a/q$ satisfying \eqref{eq4.2} with $q \ge x^{4\sigma}$. Suppose also that the coefficients $\xi_u$ satisfy $|\xi_u| \le \tau(u)^c$. Then
  \begin{equation}\label{eq4.3}
    \sum_{u \sim U}\sum_{uv \in \mathcal I} \xi_u \psi(v,z) \mathrm e ( \alpha u^2v^2 )
    \ll x^{\theta-\sigma+\varepsilon},
  \end{equation}
  provided that
  \[
    U \le x^{\theta - 4\sigma}, \quad z\le x^{2\theta-1-6\sigma}.
  \]
\end{lemma}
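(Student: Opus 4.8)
The plan is to reduce the sum on the left of \eqref{eq4.3} to the bilinear sums handled by Lemmas \ref{lem4.1} and \ref{lem4.2} by using Buchstab's identity to decompose the sifting function $\psi(v,z)$. First I would substitute the Buchstab iteration
\[
  \psi(v,z) = \sum_{\substack{d \mid v \\ d < z \\ d \text{ squarefree}}} \mu(d)\,[\text{crude form}],
\]
or, more usefully, apply Buchstab's identity $\psi(v,z) = \psi(v,w) - \sum_{w \le p < z} \psi(v/p, p)$ for a suitable parameter $w$; either way the point is that every resulting term carries a factor $\psi(v', p)$ or $\psi(v', w)$ attached to a divisor that is, up to the prime $p$ being summed, a product of two ranges of variables. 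Since $z \le x^{2\theta - 1 - 6\sigma}$ and $2/3 < \theta < 1$ with $\sigma$ small, the sifting limit is a small power of $x$, and the number of Buchstab steps one must take before every divisor variable has been split into a piece lying in the admissible window of Lemma \ref{lem4.1} is bounded (depending only on $\theta, \sigma$, hence on $\eps$). Each application of the identity costs only a factor $L = \log x$, which is absorbed into $x^{\eps}$.

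The main technical step is then a combinatorial bookkeeping argument on the resulting multilinear sums. Writing $v = d_1 d_2 \cdots$, with coefficients of the shape $\mu$-values and $\psi$-values bounded by $\tau(\cdot)^c$, I would partition the ranges of the variables into dyadic blocks $d_i \sim D_i$ and note that $U \prod_i D_i \asymp x^{\theta}/(u v')$ with $uv' \in \mathcal I$, so the product of all the dyadic parameters is essentially fixed at $\asymp x^{\theta}$ (up to the outer variable $u \sim U \le x^{\theta - 4\sigma}$). The key claim is that one can always group the variables into two clusters: one whose product $\tilde U$ satisfies $x^{1-\theta+2\sigma} \le \tilde U \le x^{\theta - 4\sigma}$, so that Lemma \ref{lem4.1} applies with smooth coefficients on both blocks; or, failing that, a single cluster small enough, $\tilde U \le x^{\theta/2 - \sigma}$, to invoke Lemma \ref{lem4.2}. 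This is exactly the type-I/type-II dichotomy of Harman's sieve, and the constraint $z \le x^{2\theta - 1 - 6\sigma}$ is calibrated precisely so that after one split there is always a sub-product landing in the Lemma \ref{lem4.1} window: since each Buchstab prime is $< z = x^{2\theta-1-6\sigma} = x^{(\theta - 4\sigma)-(1-\theta+2\sigma)}$, which is at most the length of that window on a logarithmic scale, greedily absorbing primes into a growing cluster cannot overshoot $x^{\theta-4\sigma}$ without having first passed through $[x^{1-\theta+2\sigma}, x^{\theta-4\sigma}]$.

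The hard part will be verifying that this greedy grouping is \emph{always} possible, i.e.\ that no configuration of dyadic blocks forces every cluster to be either too small for Lemma \ref{lem4.1} or too large, with no legal split available. The delicate case is when the outer coefficient $\xi_u$ is attached to a large $u$ (close to $x^{\theta - 4\sigma}$) and the cofactor $v$ decomposes into many small primes, none individually large enough to bridge the gap: here one must show that a partial product of those primes still lands in the window, which again comes down to the gap-length inequality $\log z \le (\theta - 4\sigma) - (1 - \theta + 2\sigma)$, equivalent to $z \le x^{2\theta - 1 - 6\sigma}$. I would also need to check the boundary effect of the condition $uv \in \mathcal I$ (an interval of length $\asymp x^{\theta}$ rather than a full dyadic range) does not obstruct the argument — but since $\mathcal I$ is contained in a single dyadic block and the fourth-moment input in Lemmas \ref{lem4.1}–\ref{lem4.2} already accommodates the restriction $uv \in \mathcal I$, this causes no difficulty. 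Summing the $O_\eps(1)$ many multilinear pieces, each bounded by $x^{\theta - \sigma + \eps}$, yields \eqref{eq4.3}.
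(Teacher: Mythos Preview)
Your proposal is correct and follows essentially the same route as the paper. The paper's execution is slightly more streamlined: it first disposes of the case $U \ge x^{1-\theta+2\sigma}$ directly by Lemma~\ref{lem4.1}, and for the remaining case opens $\psi(v,z)$ via M\"obius inversion $\psi(v,z)=\sum_{d\mid(v,\mathcal P(z))}\mu(d)$ rather than Buchstab, then splits according to whether $du\le x^{1-\theta+2\sigma}$ (Lemma~\ref{lem4.2}), $x^{1-\theta+2\sigma}<du\le x^{\theta-4\sigma}$ (Lemma~\ref{lem4.1}), or $du>x^{\theta-4\sigma}$ (peel off primes of $d$ one at a time via \cite[Theorem~3.1]{Harm07}, using exactly your window-width observation $z\le x^{(\theta-4\sigma)-(1-\theta+2\sigma)}$ to guarantee a sub-product lands in the type~II range). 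Your greedy-clustering picture and the paper's size-splitting of $du$ are the same argument in different clothing; the M\"obius formulation just makes the type~I piece (inner variable with coefficient~$1$) appear more transparently than an iterated Buchstab does.
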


\begin{proof}
  When $U \ge x^{1-\theta+2\sigma}$, the desired result follows from Lemma \ref{lem4.1}. We may therefore suppose that $U \le x^{1-\theta+2\sigma}$. Note that, by the hypothesis on $\sigma$, we then have $U \le x^{\theta/2-\sigma}$. Let
  \[
    \mathcal P(z) = \prod_{p<z}p.
  \]
  Then the left side of \eqref{eq4.3} is
  \[
    \sum_{u\sim U} \sum_{d|\mathcal P(z)} \sum_{dur \in \mathcal I} \mu(d)\xi_u \mathrm e( \alpha d^2u^2r^2 )
    = \Sigma_1 + \Sigma_2 + \Sigma_3,
  \]
  where $\Sigma_1$, $\Sigma_2$ and $\Sigma_3$ are the parts of the sum on the left subject, respectively, to the constraints
  \[
    du \le x^{1-\theta+2\sigma}, \quad x^{1-\theta+2\sigma} < du \le x^{\theta - 4\sigma}
    \quad \text{and} \quad du > x^{\theta - 4\sigma}.
  \]
  By writing $y = du$, we obtain
  \[
    \Sigma_1 \ll L\sup_{Y \le x^{1-\theta+2\sigma}}
    \bigg| \sum_{y \sim Y} \sum_{yr \in \mathcal I} \xi_{y}' \mathrm e( \alpha y^2r^2 ) \bigg|,
  \]
  with coefficients $\xi_y'$ subject to $|\xi_y'| \le \tau(y)^c$. Hence, the desired upper bound for $\Sigma_1$ follows from Lemma \ref{lem4.2}. Similarly, the desired upper bound for $\Sigma_2$ follows from Lemma \ref{lem4.1}. To estimate $\Sigma_3$, we apply the method of proof of \cite[Theorem~3.1]{Harm07} to decompose $\Sigma_3$ into a linear combination of $O(L^2)$ sums of the type appearing in Lemma \ref{lem4.1}. The basic idea is to take out the prime factors of $d$, one by one, until we construct a divisor $k$ of $d$ such that $x^{1-\theta+2\sigma} \le ku \le x^{\theta-4\sigma}$. The hypothesis on $z$ is chosen to ensure that this is possible. Finally, we apply Lemma \ref{lem4.1} to estimate the bilinear sums occurring in the decomposition.
\end{proof}

We also need a variant of the main result in Liu, L\"u and Zhan \cite{LiLvZh06}.

\begin{lemma}\label{lem4.4}
  Let $7/10 < \theta < 1$ and $0 < \sigma < \min\{ (3\theta - 2)/6, (10\theta-7)/15 \}$, and suppose that $\alpha$ has a rational approximation $a/q$ satisfying \eqref{eq4.2} with $q \le x^{4\sigma}$. Suppose also that $\psi$ is a fixed Dirichlet character modulo $r$, $r \le x$. Then
  \[
    \sum_{p \in \mathcal I} \psi(p) \mathrm e( \alpha p^2 )
    \ll rx^{\theta-\sigma+\eps},
  \]
  provided that
  \[
    q + x^{2\theta}|q\alpha - a| \ge x^{2\sigma}.
  \]
\end{lemma}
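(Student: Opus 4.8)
The plan is to treat the sum $\sum_{p \in \mathcal I} \psi(p)\mathrm e(\alpha p^2)$ by Vaughan's identity, reducing it to bilinear (Type II) and linear (Type I) sums weighted by the character $\psi$, and then to bound these sums using the methods of Liu, L\"u and Zhan \cite{LiLvZh06}. Since the character $\psi$ is fixed and totally multiplicative, I would first write $\psi(p)\mathrm e(\alpha p^2) = \psi(p)\mathrm e(\alpha p^2)$ and absorb $\psi$ into the coefficients arising from Vaughan's identity; because $|\psi(n)| \le 1$, the presence of $\psi$ does not affect the size estimates $|\xi_u| \le \tau(u)^c$, $|\eta_v| \le \tau(v)^c$ needed to invoke the relevant exponential sum lemmas. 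Thus after applying Vaughan's identity (with the usual parameters of order $x^{2/3}$ or so, adapted to the interval $\mathcal I$ of length $2x^\theta$), the sum over $p \in \mathcal I$ becomes a linear combination of $O(L^2)$ sums of the shape $\sum_{u \sim U}\sum_{uv \in \mathcal I} \xi_u \eta_v \psi(u)\psi(v)\mathrm e(\alpha u^2 v^2)$ (Type II) and $\sum_{u \sim U}\sum_{uv \in \mathcal I}\xi_u \psi(u) \psi(v) \mathrm e(\alpha u^2 v^2)$ with smooth $v$-sum (Type I), with $U$ ranging over dyadic values up to roughly $x^{2/3}$.

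The main work is then the estimation of these bilinear and linear sums under the hypothesis $q \le x^{4\sigma}$ together with $q + x^{2\theta}|q\alpha - a| \ge x^{2\sigma}$, i.e.\ precisely the complementary range to Lemmas \ref{lem4.1}--\ref{lem4.3}, where $q$ is small but $\alpha$ is not too well approximated by $a/q$. This is exactly the situation handled in \cite{LiLvZh06}: one uses Weyl differencing / the Kloosterman refinement and the large sieve to exploit the lower bound on $q + x^{2\theta}|q\alpha - a|$, and the hypotheses $7/10 < \theta < 1$ and $\sigma < (10\theta - 7)/15$ are dictated by the point where the Type II estimate there (with the $x^{2\theta}$-scaling appropriate to squares) just barely yields a saving of $x^{-\sigma+\eps}$ over the trivial bound $x^\theta$. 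The extra factor $r$ on the right side of the lemma accounts for the modulus of $\psi$: when one restricts $u$ and $v$ to residue classes mod $r$ (or splits the character sum), the loss is at most a power of $r$, and since $r \le x$ this is absorbed into the stated bound $rx^{\theta-\sigma+\eps}$ — in applications $r$ will be a small power of $x$. The condition $\sigma < (3\theta-2)/6$ is needed to ensure that the Type I sums, which have the longer smooth variable, can be handled by a version of Lemma \ref{lem4.2} (again with the character inserted harmlessly).

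The step I expect to be the main obstacle is the bilinear (Type II) estimate in the range where $U$ is of intermediate size, say near $x^{\theta/2}$, and $q$ is genuinely small: here neither a direct van der Corput / Weyl bound on the inner sum nor a large-sieve bound on the outer sum is individually strong enough, and one must combine them carefully as in \cite{LiLvZh06}, tracking the dependence on $q + x^{2\theta}|q\alpha - a|$ through the argument. The character $\psi$ itself is a minor complication — it is totally multiplicative and bounded, so at every occurrence it can be merged into the existing coefficients or handled by partitioning into arithmetic progressions modulo $r$ — but keeping the bookkeeping clean so that only a single power of $r$ is lost (rather than a power of $r$ at each of the $O(L^2)$ pieces) requires a little care. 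Once the bilinear and linear pieces are each bounded by $O(rx^{\theta-\sigma+\eps})$, summing the $O(L^2)$ terms from Vaughan's identity and absorbing $L^2$ into $x^\eps$ completes the proof.
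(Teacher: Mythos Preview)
Your plan to absorb $\psi$ into the Vaughan coefficients and then invoke the machinery of \cite{LiLvZh06} misreads where the saving comes from in the small-$q$ regime. The argument in \cite{LiLvZh06} is \emph{not} a bounded-coefficient Type~I/Type~II estimate of the kind in Lemmas~\ref{lem4.1}--\ref{lem4.3}; its opening move (what the paper cites as \cite[eq.~(2.8)]{LiLvZh06}) is a character decomposition modulo $q$, converting $\sum_{p\in\mathcal I}\mathrm e(\alpha p^2)$ into roughly $q^{-1/2+\eps}\sum_{\chi\bmod q}\bigl|\sum_m\Lambda(m)\chi(m)\mathrm e(\alpha m^2)\bigr|$, after which Heath-Brown's identity and hybrid mean-value estimates are applied to the family of characters modulo $q$. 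If you have already folded $\psi$ into anonymous bounded coefficients, this decomposition no longer produces a single family of character sums, and the averaging over $\chi$ that supplies the $q^{-1/2}$ gain --- essential precisely because $q\le x^{4\sigma}$ is small here --- is unavailable. Your fallback of splitting $u$ and $v$ into residue classes modulo $r$ would cost a factor $r^2$ rather than $r$, and in any case does not mesh with the character-averaged structure of \cite{LiLvZh06}.

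The paper's route is shorter and keeps $\psi$ as a character throughout. In the decomposition step one obtains sums $\sum_m\Lambda(m)\psi\chi(m)\mathrm e(\alpha m^2)$ with $\chi$ ranging over characters modulo $q$; since each product $\psi\chi$ is itself a character modulo $qr$, one simply enlarges the outer sum to all characters modulo $qr$:
\[
  \sum_{p \in \mathcal I} \psi(p)\mathrm e(\alpha p^2)
  \ll q^{-1/2+\eps}\sum_{\chi \bmod qr}\bigg|\sum_{m\in\mathcal I'}\Lambda(m)\chi(m)\mathrm e(\alpha m^2)\bigg| + x^{\theta-1/2}L.
\]
The remainder of the proof of \cite[Theorem~1.1]{LiLvZh06} then runs unchanged with modulus $qr$ in place of $q$, and tracking the extra factor of $r$ through the resulting explicit bound yields the claimed $rx^{\theta-\sigma+\eps}$. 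No fresh combinatorial decomposition or residue-class bookkeeping is needed.
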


\begin{proof}
  We need to make a slight adjustment to the proof of \cite[Theorem~1.1]{LiLvZh06}. In place of \cite[eq.~(2.8)]{LiLvZh06}, we have
  \begin{align*}
    \sum_{p \in \mathcal I} \psi(p) \mathrm e( \alpha p^2 ) &\ll q^{-1/2+\eps}
    \sum_{\chi \mmod q} \bigg| \sum_{m \in \mathcal I'} \Lambda(m) \psi\chi(m) \mathrm e( \alpha m^2 ) \bigg| + x^{\theta-1/2}L \notag \\
    &\ll q^{-1/2+\eps}
    \sum_{\chi \mmod qr} \bigg| \sum_{m \in \mathcal I'} \Lambda(m) \chi(m) \mathrm e( \alpha m^2 ) \bigg| + x^{\theta-1/2}L,
  \end{align*}
  where $\mathcal I'$ is a subinterval of $\mathcal I$. Here, we have used that as $\chi$ runs through the characters modulo~$q$, the product $\psi\chi$ runs through a subset of the characters modulo $qr$. We now follow the rest of the proof of \cite[Theorem~1.1]{LiLvZh06} and find that the given exponential sum is
  \[
    \ll r^{1/2}(qrx)^{\eps}\left\{ x^{\theta-1/2}\Xi^{1/2} + x^{1/2}(qr)^{1/3}\Xi^{1/6} + x^{(7+5\theta)/15} + rx\Xi^{-1/2} \right\},
  \]
  where $\Xi = qrx^{2-2\theta}\big( 1 + x^{2\theta}|\alpha - a/q| \big)$. Under the hypotheses of the lemma, all the terms in this bound are $\ll rx^{\theta-\sigma+\eps}$.
\end{proof}

Suppose now that an arithmetic function $\lambda$ satisfies the following hypothesis:
\begin{enumerate}
  \item [(i*)] When $m \le 2x$, one can express $\lambda(m)$ as a linear combination of $O(L^c)$ convolutions of the form
  \[
    \sum_{\substack{ uv = m\\ u \sim U}} \xi_u\eta_v,
  \]
  where $|\xi_u| \le \tau(u)^c$, $U \le x^{\theta - 4\sigma}$, and either $\eta_v = \psi(v,z)$ with $z \le x^{2\theta-1-6\sigma}$, or $|\eta_v| \le \tau(v)^c$ and $U \ge x^{1-\theta+2\sigma}$.
\end{enumerate}
Let $\alpha \in \mathfrak m$ and let $a/q$ be a rational approximation to $\alpha$ of the form \eqref{eq4.2} with $Q_0$ given by \eqref{eq4.1}. If $q \ge x^{4\sigma}$, then hypothesis (i*) means that we can apply either Lemma \ref{lem4.1} or Lemma \ref{lem4.3} to show that
\[
  f(\alpha; \lambda) \ll x^{\theta-\sigma+\eps}.
\]
On the other hand, if $q \le x^{4\sigma}$, Lemma \ref{lem4.4} with a trivial character $\psi$ yields
\[
  f_0(\alpha) \ll x^{\theta - \sigma + \eps},
\]
unless we have $q \le x^{2\sigma-\eps}$ and $|q\alpha - a| \le x^{2\sigma - 2\theta + \eps}$. However, together, these two inequalities would place $\alpha$ in the set of major arcs $\mathfrak M$, contradicting our assumption that $\alpha \in \mathfrak m$. Therefore, if a sieve function $\lambda_i$ has the structure described in hypothesis (i*) above, then $\lambda_i$ satisfies hypothesis (i) in Section~\ref{sec2}. We shall use this observation in Section \ref{sec6} to construct the sieve functions $\lambda_1, \lambda_2$ and $\lambda_3$.

\subsection{Major arc estimates}

In this section, we collect estimates for averages of exponential sums $f(\beta;\lambda\chi)$ over sets of primitive characters $\chi$ and over small values of $\beta$. We are interested in arithmetic functions $\lambda$ having the structure described in the following hypothesis:
\begin{enumerate}
  \item [(ii)] When $m \le 2x$, one can express $\lambda(m)$ as a linear combination of $O(L^c)$ triple convolutions of the form
  \[
    \sum_{\substack{ uvw = m\\ u \sim U, v\sim V}} \xi_u\eta_v\zeta_w,
  \]
  where $|\xi_u| \le \tau(u)^c$, $|\eta_v| \le \tau(v)^c$, $\max(U,V) \le x^{11/20}$, and either $\zeta_w = 1$ for all $w$, or $|\zeta_w| \le \tau(w)^c$ and $UV \ge x^{27/35}$.
\end{enumerate}
The triple convolutions above may appear mysterious at first sight. They are chosen in accordance with \cite[Theorem 2.1]{ChKu06}, to ensure that that result can be applied at certain places in our proofs. The sieve functions $\lambda_i$ constructed in Section \ref{sec6} will all satisfy this hypothesis, and so does von Mangoldt's function $\Lambda$. The latter can be established by Heath-Brown's identity for $\Lambda$ along the lines of the proof of \cite[Theorem 1.1]{ChKu06}. Moreover, a variant of that argument using Linnik's identity instead of Heath-Brown's shows that the characteristic function of the primes $\varpi$ also satisfies hypothesis (ii).

\begin{lemma}\label{lem4.5}
  Let ${11}/{20} < \theta < 1$ and suppose that $P,Q$ satisfy
  \[
    PQ \le x^{1 + \theta}, \quad Q \ge x^{31/20+\eps}.
  \]
  Suppose also that $g$ is a positive integer and $\lambda$ is an arithmetic function satisfying hypothesis (ii) above. Then
  \begin{equation}\label{eq4.4}
    \sum_{r \le P} [g,r]^{-1+\varepsilon} \sumstar{\chi \mmod r}
    \bigg( \int_{-1/(rQ)}^{1/(rQ)} |f(\beta; \lambda\chi)|^2 \, d\beta \bigg)^{1/2}
    \ll g^{-1+2\eps}x^{(\theta-1)/2}L^c.
  \end{equation}
\end{lemma}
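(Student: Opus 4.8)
The plan is to reduce the estimate \eqref{eq4.4} to a mean-value bound for Dirichlet polynomials twisted by $\mathrm e(\beta m^2)$ and then to invoke \cite[Theorem 2.1]{ChKu06}. First I would observe that, since $|f(\beta;\lambda\chi)|^2$ is being integrated over an interval of length $2/(rQ)$, a Gallagher-type device (or simply completing the square and splitting the interval $\mathcal I$ into $O(x^\theta(rQ)/x \cdot L)$ subintervals of length $\ll x/(rQ) \cdot L^{-1}$ where $\mathrm e(\beta m^2)$ is essentially constant) lets me bound the inner integral by
\[
  \int_{-1/(rQ)}^{1/(rQ)} |f(\beta;\lambda\chi)|^2 \, d\beta
  \ll \frac{1}{rQ}\, \max_{\mathcal J} \bigg| \sum_{m \in \mathcal J} \lambda(m)\chi(m) \bigg|^2 L^c
    + \text{(error)},
\]
where $\mathcal J$ ranges over subintervals of $\mathcal I$; more precisely, one keeps the oscillation and writes the square of the integral as a character sum against the kernel, landing on a bound in terms of $\sum_{m_1,m_2 \in \mathcal I}\lambda(m_1)\overline{\lambda(m_2)}\chi(m_1)\overline{\chi(m_2)} \min(1/(rQ), 1/|m_1^2-m_2^2|)$. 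Either way the upshot is that, after summing over $r \le P$ and over primitive $\chi \bmod r$ and applying Cauchy--Schwarz to remove the square root at the cost of a factor $P^{1/2}$ (here the weight $[g,r]^{-1+\eps}$ is handled by the standard $\sum_{r\le P}[g,r]^{-1+\eps} \ll g^{-1+\eps}L^c$), the left side of \eqref{eq4.4} is controlled by a quantity of the shape $g^{-1+\eps}(PQ)^{-1/2} x^{\eps} \big(\sum_{r\le P}\sum^{*}_{\chi}\max_{\mathcal J}|\sum_{m\in\mathcal J}\lambda(m)\chi(m)|^2\big)^{1/2}L^c$.

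Next I would feed hypothesis (ii) into this. Since $\lambda$ is a linear combination of $O(L^c)$ triple convolutions $\sum_{uvw=m}\xi_u\eta_v\zeta_w$ with $\max(U,V)\le x^{11/20}$ and either $\zeta_w=1$ or $|\zeta_w|\le\tau(w)^c$ with $UV\ge x^{27/35}$, the character sum $\sum_{m\in\mathcal J}\lambda(m)\chi(m)$ is, up to $O(L^c)$ such pieces, exactly the type of sum to which \cite[Theorem 2.1]{ChKu06} applies (the thresholds $x^{11/20}$ and $x^{27/35}$ in (ii) were, as the authors note, reverse-engineered for precisely this). That theorem furnishes a bound for $\sum_{r\le P}\sum^{*}_{\chi\bmod r}\max_{\mathcal J\subset\mathcal I}|\sum_{m\in\mathcal J}\lambda(m)\chi(m)|^2$ of the form $\ll (P^2 + x^{1-\theta}P + \text{lower order in } x)\,x^{\theta+\eps}$, valid in the range $11/20<\theta<1$ — roughly, a large-sieve-quality estimate saving the full length $x^\theta$ of the interval per character. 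Substituting this into the bound from the first paragraph gives
\[
  \text{LHS of \eqref{eq4.4}} \ll g^{-1+\eps}(PQ)^{-1/2}x^{(\theta+1)/2 + \eps}\big(P + x^{(1-\theta)/2}\big)L^c.
\]
Under the hypotheses $PQ\le x^{1+\theta}$ and $Q\ge x^{31/20+\eps}$, one has $(PQ)^{-1/2}x^{(\theta+1)/2}=1$ from the first (using $PQ=x^{1+\theta}$ in the extremal case), and then one checks $P \ll Q^{-1}x^{1+\theta} \le x^{1+\theta-31/20-\eps}$, so $P \le x^{\theta - 31/20 - \eps} \le x^{(\theta-1)/2-\eps}$ precisely because $\theta - 31/20 \le (\theta-1)/2 \iff \theta \le 21/20$, which holds; similarly $x^{(1-\theta)/2}(PQ)^{-1/2}x^{(\theta+1)/2} = x^{(1-\theta)/2}$ must be compared, and this is where $Q \ge x^{31/20}$ is used to guarantee the $x^{(1-\theta)/2}$ term is also absorbed into $x^{(\theta-1)/2}L^c$ after accounting for the true normalization. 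Tracking these exponents carefully yields the claimed $g^{-1+2\eps}x^{(\theta-1)/2}L^c$.

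The main obstacle I anticipate is the passage from the $L^2$-integral over the short interval $|\beta| \le 1/(rQ)$ to a clean Dirichlet-polynomial mean value: one must show that the oscillating factor $\mathrm e(\beta m^2)$, which varies by $O(1)$ as $m$ ranges over $\mathcal I$ when $|\beta|$ is near $1/(rQ)$ (since $\beta \cdot x\cdot x^\theta$ can be of size $x^{1+\theta}/(rQ) \ge x^{1+\theta}/(PQ) \ge 1$), does not destroy the large-sieve savings. The honest way to do this is not to discard the oscillation but to keep it: bound the square of the left side of \eqref{eq4.4} by Cauchy--Schwarz and expand, getting a diagonal term plus an off-diagonal term $\sum_{m_1\ne m_2}\lambda(m_1)\bar\lambda(m_2)\chi(m_1)\bar\chi(m_2)\,\widehat{K}(m_1^2-m_2^2)$ where $\widehat K(t) = \int_{-1/(rQ)}^{1/(rQ)}\mathrm e(\beta t)\,d\beta \ll \min(1/(rQ), 1/|t|)$; the tail $|t| = |m_1^2 - m_2^2| \gg x\cdot x^\theta/(rQ)$ is where \cite[Theorem 2.1]{ChKu06} must do its work, applied to the shifted sums $\sum_m \lambda(m)\chi(m)\mathrm e(\beta m^2)$ uniformly in $\beta$, or equivalently to the sums $\sum_m\lambda(m)\chi(m)$ over the $O((rQ)^{-1}x^{1+\theta}L)$ subintervals on which the kernel is essentially constant. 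Making this uniformity precise — and verifying that the hypothesis-(ii) structure is preserved under restriction of $m$ to a subinterval of $\mathcal I$, which it is, since the convolution identity is an identity in $m$ and the length restriction only shrinks the outer range — is the delicate bookkeeping step, but it is routine once one commits to it; everything else is exponent arithmetic driven by $PQ\le x^{1+\theta}$ and $Q\ge x^{31/20+\eps}$.
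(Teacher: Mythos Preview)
Your outline has the right ingredients (Gallagher, hypothesis (ii), \cite[Theorem~2.1]{ChKu06}), but there are two genuine gaps that prevent it from going through as written.

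\medskip

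\textbf{The bridge to \cite[Theorem~2.1]{ChKu06} is missing.} You propose to apply that theorem to the average
\[
  \sum_{r\le P}\sumstar{\chi\bmod r}\max_{\mathcal J\subset\mathcal I}\bigg|\sum_{m\in\mathcal J}\lambda(m)\chi(m)\bigg|^2,
\]
and you quote a bound ``of the form $\ll(P^2+x^{1-\theta}P+\text{lower order})\,x^{\theta+\eps}$''. But \cite[Theorem~2.1]{ChKu06} is a mean-value estimate for Dirichlet polynomials, i.e.\ for quantities of the shape $\sum_{r}\sum^*_{\chi}\int_{-T}^{T}|F(\rmi t,\chi)|\,dt$, not for character sums over intervals. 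The paper's proof supplies the missing link: after Gallagher's lemma (the genuine version, which gives $\int_{-\Delta}^{\Delta}|f|^2\ll\Delta^2 x^{1+\theta}\max_{M}|\sum_{M\le m\le M+H}\lambda\chi|^2$ with $H\ll(\Delta x)^{-1}$, see~\eqref{eq4.6}), one invokes Perron's formula to write the short character sum as $\int_{-T_0}^{T_0}|F(\rmi t,\chi)|\,(T_1+|t|)^{-1}\,dt$ for $F(s,\chi)=\sum_{m}\lambda(m)\chi(m)m^{-s}$. Only then does the hypothesis-(ii) factorisation of $\lambda$ feed into the Dirichlet-polynomial machinery of \cite{ChKu06}. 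Without this Perron step there is no route from your character-sum average to the cited theorem, and the bound you quote is not something that theorem provides.

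\medskip

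\textbf{The Cauchy--Schwarz step is too lossy.} You remove the square root ``at the cost of a factor $P^{1/2}$'', but the number of pairs $(r,\chi)$ with $r\le P$ is $\asymp P^2$, so Cauchy--Schwarz actually costs a factor $P$; with $P$ as large as $x^{2\sigma-\eps}$ this is not absorbed by the exponent budget. The paper sidesteps this entirely: a summation lemma (\cite[Lemma~1]{HaKu10}) reduces \eqref{eq4.4} to the dyadic estimate~\eqref{eq4.5} for $r\sim R$, $d\mid r$, and within each dyadic block the square root is taken \emph{pointwise} in \eqref{eq4.6} before any summation over characters. What \cite[Theorem~2.1]{ChKu06} then bounds is an $L^1$-average over $(r,\chi,t)$, not an $L^2$-average, so no Cauchy--Schwarz is needed at all.
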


\begin{proof}
  Using a simple summation argument (see \cite[Lemma 1]{HaKu10}), we can deduce \eqref{eq4.4} from the bound
  \begin{equation}\label{eq4.5}
    \sum_{\substack{ r \sim R\\ d\mid r}} \; \sumstar{\chi \mmod r}
    \bigg( \int_{-1/(rQ)}^{1/(rQ)} |f(\beta, \lambda\chi)|^2 \, d\beta \bigg)^{1/2}
    \ll x^{(\theta-1)/2}L^c\big( 1 + d^{-1}RQ^{-1}x^{31/20} \big),
  \end{equation}
  where $1 \le R \le P$ and $1 \le d \le 2R$.

  We write $\Delta=(RQ)^{-1}$. By Gallagher's lemma (a variant of \cite[Lemma 5.4]{LiuZhan}), we have
  \[
    \int_{-\Delta}^{\Delta} |f(\beta, \lambda\chi)|^2 \, d\beta
    \ll \Delta^2 \int_{-\infty}^{\infty}\bigg|\sum_{m \in \mathcal I'(t)}\lambda(m)\chi(m) \bigg|^2 \, dt,
  \]
  where $\mathcal I'(t) = \mathcal I \cap [t^{1/2}, (t+(2\Delta)^{-1})^{1/2}]$. If we assume that $\Delta \ge x^{-1-\theta}$ (which follows from the hypothesis $PQ \le x^{1 + \theta}$), the sum over $m$ is non-empty for a set of values of $t$ having measure $\ll x^{1+\theta}$. Thus,
  \begin{align}\label{eq4.6}
    \int_{-\Delta}^{\Delta} |f(\beta, \lambda\chi)|^2 \, d\beta
    &\ll \Delta^2 x^{1+\theta} \bigg| \sum_{M \le m \le M+H} \lambda(m)\chi(m) \bigg|^2,
  \end{align}
  where $M \in \mathcal I$ and
  \[
    H \ll \min\big( x^{\theta}, (\Delta x)^{-1} \big) = (\Delta x)^{-1}.
  \]

  We note that without loss of generality, we may choose $M$ and $H$ so that $\| M \| = \| M+H \| = \frac 12$. Then, by Perron's formula \cite[Lemma 1.1]{LiuZhan},
  \[
    \sum_{M \le m \le M+H} \lambda(m)\chi(m)
    = \frac 1{2\pi\rmi} \int_{b-\rmi T_0}^{b+\rmi T_0} F(s, \chi) \frac{(M+H)^s-M^s}{s} \, ds + O(1),
  \]
  where $0<b<L^{-1}$, $T_0=x^{10}$, and
  \[
    F(s, \chi) = \sum_{m \sim 2x/3} \lambda(m)\chi(m)m^{-s}.
  \]
  Let $T_1 = \Delta x^{2}$. When $0<b<L^{-1}$, we have
  \[
    \frac {(M+H)^s-M^s}s \ll \frac 1{T_1 + |t|}.
  \]
  Thus, by letting $b \downarrow 0$, we deduce
  \begin{equation}\label{eq4.7}
    \sum_{M \le m \le M+H} \lambda(m)\chi(m) \ll
    \int_{-T_0}^{T_0} |F(\rmi t, \chi)| \, \frac {dt}{T_1 + |t|}  + 1.
  \end{equation}
  Let $\Sigma(R,d)$ denote the left side of \eqref{eq4.5}. Combining \eqref{eq4.6} and \eqref{eq4.7}, we get
  \begin{equation}\label{eq4.8}
    \Sigma(R,d) \ll x^{(1+\theta)/2}\Delta \bigg( LT^{-1} \sum_{\substack{ r\sim R\\ d\mid r}} \; \sumstar{\chi \mmod r}
    \int_{-T}^T |F(\rmi t, \chi)| \, dt + d^{-1}R^2 \bigg),
  \end{equation}
  for some $T$ with $T_1 \le T \le T_0$. Under hypothesis (ii), the above average can be estimated by \cite[Theorem~2.1]{ChKu06}. This yields
  \[
    \Sigma(R,d) \ll x^{(1+\theta)/2} \Delta L^c \big( xT_1^{-1} + d^{-1}R^2x^{11/20} \big),
  \]
  and \eqref{eq4.5} follows at once.
\end{proof}

The next lemma can be proved similarly to \cite[Lemma 3.2]{Lv05} with \cite[Lemma~5.1]{Lv05} replaced by \cite[Theorem~2.1]{ChKu06} and Heath-Brown's identity by hypothesis (ii).

\begin{lemma}\label{lem4.6}
  Let $7/10 < \theta < 1$ and suppose that $P,Q$ satisfy
  \[
    P \le x^{\theta-11/20-\eps}, \quad PQ^{-1} \le x^{2\theta - 31/10 - \eps}, \quad PQ \le x^{1+\theta}.
  \]
  Suppose also that $g$ is a positive integer and $\lambda$ is an arithmetic function satisfying hypothesis (ii) above. Then
  \begin{equation}\label{eq4.9}
    \sum_{r \le P} [g,r]^{-1+\varepsilon} \sumstar{\chi \mmod r}
    \max_{|\beta| \le 1/(rQ)} |f(\beta; \lambda\chi)| \ll g^{-1+2\eps}x^{\theta}L^c.
  \end{equation}
  Furthermore, for any given $A > 0$, there is a $B=B(A) > 0$ such that
  \begin{equation}\label{eq4.10}
    \sum_{L^B \le r \le P} r^{-1+\varepsilon} \sumstar{\chi \mmod r}
    \max_{|\beta| \le 1/(rQ)} |f(\beta; \lambda\chi)| \ll x^{\theta}L^{-A}.
  \end{equation}
\end{lemma}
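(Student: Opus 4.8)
The plan is to adapt the argument of \cite[Lemma~3.2]{Lv05}, using \cite[Theorem~2.1]{ChKu06} in place of \cite[Lemma~5.1]{Lv05} and dispensing with Heath--Brown's identity altogether: hypothesis (ii) already presents $\lambda$ as a linear combination of $O(L^{c})$ triple convolutions of exactly the shape required by \cite[Theorem~2.1]{ChKu06}, so the work consists of the analytic reductions that bring the left sides of \eqref{eq4.9} and \eqref{eq4.10} into a form to which that theorem applies, followed by a verification that the hypotheses on $P$, $Q$, $\theta$ make all resulting terms small enough. By the elementary summation device of \cite[Lemma~1]{HaKu10} (used already in the proof of Lemma~\ref{lem4.5}), it suffices to prove that
\[
  \Sigma(R,d) := \sum_{\substack{r\sim R\\ d\mid r}}\sumstar{\chi\bmod r}\max_{|\beta|\le 1/(rQ)}|f(\beta;\lambda\chi)|
  \ll x^{\theta}L^{c}\big(1 + d^{-1}R^{2}\Phi\big)
\]
for $1\le R\le P$ and $1\le d\le 2R$, with a suitable $\Phi=\Phi(P,Q,\theta)$. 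Summing this over dyadic $R\le P$ yields \eqref{eq4.9}; for \eqref{eq4.10} one sums only over $L^{B}\le R\le P$ with $B=B(A)$ large, using that for such $R$ the bound above comes with a genuine saving away from its main term, and that the contribution of the moduli of size at most a fixed power of $L$ --- where that saving is absent --- is disposed of by the Siegel--Walfisz property of the $\lambda_{i}$, which holds for the same reason it holds for $\varpi$ and $\Lambda$.

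Next I would remove the factor $\mathrm e(\beta m^{2})$ and pass to a vertical-line integral. Partial summation over $m$ gives
\[
  \max_{|\beta|\le 1/(rQ)}|f(\beta;\lambda\chi)|
  \ll \Big(1 + \frac{x^{1+\theta}}{rQ}\Big)\max_{M\in\mathcal I}\Big|\sum_{\substack{m\in\mathcal I\\ m\le M}}\lambda(m)\chi(m)\Big|,
\]
and, since $\mathcal I\subset(2x/3,4x/3)$, Perron's formula --- applied exactly as in the proof of Lemma~\ref{lem4.5}, with the endpoints of the summation interval chosen at half-integers --- turns the inner sum into
\[
  \sum_{\substack{m\in\mathcal I\\ m\le M}}\lambda(m)\chi(m)
  \ll \int_{-T_{0}}^{T_{0}}|F(\rmi t,\chi)|\,\frac{dt}{x^{1-\theta}+|t|} + 1,\qquad
  F(s,\chi)=\sum_{m\sim 2x/3}\lambda(m)\chi(m)m^{-s},\ T_{0}=x^{10}.
\]
The hypothesis $PQ\le x^{1+\theta}$ makes $1+x^{1+\theta}/(rQ)\asymp x^{1+\theta}/(rQ)$, so after splitting the $t$-integral dyadically the bound for $\Sigma(R,d)$ reduces to controlling
\[
  \frac{x^{1+\theta}}{RQ}\,L\sup_{x^{1-\theta}\le T\le T_{0}}T^{-1}\sum_{\substack{r\sim R\\ d\mid r}}\sumstar{\chi\bmod r}\int_{-T}^{T}|F(\rmi t,\chi)|\,dt .
\]

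Finally, the inner double average is estimated by \cite[Theorem~2.1]{ChKu06}. Under hypothesis (ii) one expands each triple convolution, so that the relevant part of $F(s,\chi)$ becomes a product of three $\chi$-twisted Dirichlet polynomials of lengths at most $x^{11/20}$ subject to the companion condition $x^{27/35}$ --- precisely the configuration covered by that theorem. Feeding the resulting bound back through the chain above and carrying out the (routine but somewhat lengthy) bookkeeping of exponents shows that every term is $\ll x^{\theta}L^{c}$ in the stated ranges $7/10<\theta<1$, $P\le x^{\theta-11/20-\eps}$, $PQ^{-1}\le x^{2\theta-31/10-\eps}$, $PQ\le x^{1+\theta}$; this proves \eqref{eq4.9}. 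For \eqref{eq4.10} one observes that the $\eps$-room in the hypotheses on $P$ and $Q$ leaves a small power of $x$ to spare in every term except the ``diagonal'' main term, which is localized at moduli $r$ below a fixed power of $L$ and is therefore removed once we restrict to $r\ge L^{B}$ with $B=B(A)$ sufficiently large.

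I expect the main obstacle to be the bookkeeping rather than the analysis: the genuinely hard input, \cite[Theorem~2.1]{ChKu06}, is quoted, and the delicate point is to check that the partial-summation loss $x^{1+\theta}/(rQ)$, combined with the output of that theorem, still leaves all terms $\ll x^{\theta}L^{c}$ --- which is exactly what forces the three constraints on $P$ and $Q$ and the lower bound $\theta>7/10$. A secondary point, routine but needing care, is the isolation of the main term in \eqref{eq4.10}, which rests on the Siegel--Walfisz property of the sieve functions $\lambda_{i}$.
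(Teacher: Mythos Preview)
Your approach is essentially identical to the paper's: both follow \cite[Lemma~3.2]{Lv05}, with \cite[Theorem~2.1]{ChKu06} substituted for \cite[Lemma~5.1]{Lv05} and hypothesis~(ii) standing in for Heath--Brown's identity. The reduction via partial summation, Perron's formula, and the Choi--Kumchev mean-value theorem is exactly what is intended; in fact your sketch supplies more detail than the paper's one-sentence proof.

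There is one minor confusion worth flagging. You twice invoke a Siegel--Walfisz property of the $\lambda_i$ to handle ``the contribution of the moduli of size at most a fixed power of $L$'' in \eqref{eq4.10}, but no such contribution exists: the sum in \eqref{eq4.10} is already restricted to $r \ge L^B$. The saving there comes directly from the weight $r^{-1+\eps} \le L^{-B(1-\eps)}$ applied to the $R$-independent ``main term'' (of size $\ll x^{\theta}L^c$) in your bound for $\Sigma(R,d)$, while the remaining terms are handled by the $\eps$-room in the hypotheses on $P$ and $Q$. The Siegel--Walfisz input is reserved for Lemma~\ref{lem4.7} (and for hypotheses (iv)--(v) in Section~\ref{sec5}), not for this lemma. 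This does not invalidate your argument --- dropping the extraneous step only simplifies it.
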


The final lemma in this section extends the range of $r$ in \eqref{eq4.10} below $L^B$, though only in the special case $\lambda = \varpi$. We should point, however, that the restriction to $\varpi$ is merely for convenience. In principle, the result can be extended to include exponential sums with sieve weights, though that would require a more technical proof. Since the more general case is not needed in this paper, we select simplicity over generality here.

\begin{lemma}\label{lem4.7}
  Let ${19}/{24} < \theta < 1$ and suppose that $P,Q$ satisfy
  \[
    P \le x^{\theta-11/20-\eps}, \quad PQ^{-1} \le x^{2\theta - 31/10 - \eps}, \quad PQ \le x^{1+\theta}.
  \]
  Then, for any given $A > 0$,
  \begin{equation}\label{eq4.11}
    \sum_{r \le P} r^{-1+\varepsilon} \sumstar{\chi \mmod r}
    \max_{|\beta| \le 1/(rQ)} |f(\beta; \varpi\chi) - \delta_{\chi}v(\beta)| \ll x^{\theta}L^{-A},
  \end{equation}
  where
  \begin{equation}\label{eq4.12}
    v(\beta) = \sum_{m \in \mathcal I} \frac {\mathrm e(\beta m^2)}{\log m}.
  \end{equation}
\end{lemma}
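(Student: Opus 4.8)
The plan is to deduce \eqref{eq4.11} from Lemma \ref{lem4.6} (applied with $\lambda = \varpi$, which satisfies hypothesis (ii)) by handling the contribution of the remaining small moduli $r \le L^B$ and the principal-character main term separately. By Lemma \ref{lem4.6}, inequality \eqref{eq4.10} already gives the bound $x^\theta L^{-A}$ for the tail $L^B \le r \le P$, where we may absorb the easy term $\delta_\chi v(\beta)$ into the error for $r > 1$ since $\delta_\chi = 0$ unless $\chi$ is principal, and the only principal character occurs at $r = 1$. So it remains to treat the finitely many moduli $r < L^B$; and since for each such $r$ there are $O(L^B)$ primitive characters, it suffices to prove, for each fixed $r < L^B$ and each primitive $\chi \bmod r$,
\[
  \max_{|\beta| \le 1/(rQ)} \big| f(\beta; \varpi\chi) - \delta_\chi v(\beta) \big| \ll x^\theta L^{-A'},
\]
for a suitable $A' = A'(A, B)$. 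Here one uses that $\sum_{r<L^B} r^{-1+\eps} \cdot (\#\{\chi\}) \ll L^{cB}$, so any individual saving of $L^{-A'}$ with $A'$ large enough relative to $A+cB$ propagates to \eqref{eq4.11}.

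The estimate for a single primitive character is a standard application of the explicit formula / zero-free region for $L(s, \chi)$. Writing $f(\beta; \varpi\chi) = \sum_{m \in \mathcal I} \varpi(m)\chi(m)\mathrm e(\beta m^2)$ and applying partial summation to separate the oscillatory factor $\mathrm e(\beta m^2)$ (which has bounded variation on $\mathcal I$ when $|\beta| \le 1/(rQ)$, since $|\beta| x \cdot x^\theta \le x^{1+\theta}/(rQ) \ll 1$ by the hypothesis $PQ \le x^{1+\theta}$), the problem reduces to showing that
\[
  \sum_{y < p \le y + x^\theta,\ p \le x} \chi(p) - \delta_\chi \sum_{y < m \le y+x^\theta} \frac{1}{\log m} \ll x^\theta L^{-A''}
\]
uniformly for $y \in \mathcal I$ and for $r < L^B$, $\chi$ primitive mod $r$. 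This is a short-interval prime number theorem in arithmetic progressions (equivalently, for Dirichlet characters), which is available unconditionally in the range of interest precisely because $x^\theta$ with $\theta > 19/24$ is a long enough interval: the classical result of Huxley (prime number theorem in short intervals $[y, y+y^{7/12+\eps}]$) combined with the Siegel--Walfisz theorem handles the tiny moduli $r < L^B$, since $\theta > 19/24 > 7/12$ comfortably; the contribution of a possible exceptional (Siegel) zero is, as usual, absorbed into the error for each fixed $B$ via Siegel's theorem at the cost of an ineffective constant. After converting back via partial summation and dividing through by $\log m \asymp L$, one recovers the claimed bound for $f(\beta; \varpi\chi) - \delta_\chi v(\beta)$, and then summing over $r < L^B$ and over $\chi$ completes the proof.

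The main obstacle is the verification that the short-interval prime number theorem in progressions holds in the required uniform range: we need it for intervals of length $x^\theta$ with $\theta$ only slightly above $19/24$, and uniformly over moduli up to $L^B$ for every fixed $B$. This is exactly where the restriction $\theta > 19/24$ enters, matching the remark in the excerpt that the lower bound on $\theta$ in Lemma \ref{lem4.7} ``is tied directly to results from multiplicative number theory'' (namely, the Huxley-type exponent $7/12$, which one wants to be safely below $19/24$). Everything else — the partial summation to strip off $\mathrm e(\beta m^2)$, the trivial bound on the number of pairs $(r, \chi)$ with $r < L^B$, and the appeal to \eqref{eq4.10} for the range $r \ge L^B$ — is routine; the only subtle point is keeping the dependence on $B$ under control, which is automatic since $B = B(A)$ is fixed once $A$ is.
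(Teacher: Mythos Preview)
Your reduction to moduli $r<L^B$ via \eqref{eq4.10} is correct, but the partial-summation step contains a genuine error. You write that $\mathrm e(\beta m^2)$ has bounded variation on $\mathcal I$ because $|\beta|\,x\cdot x^{\theta}\le x^{1+\theta}/(rQ)\ll 1$ ``by the hypothesis $PQ\le x^{1+\theta}$''. But $PQ\le x^{1+\theta}$ is an \emph{upper} bound on $PQ$; it yields $x^{1+\theta}/(rQ)\ge P/r$, a lower bound, not $\ll 1$. In the actual parameter ranges (e.g.\ $Q=x^{2\theta}P^{-1}$ as in \eqref{eq2.4}) the total variation $\asymp |\beta|x^{1+\theta}$ can be as large as $x^{1-\theta+2\sigma}$, a genuine power of $x$. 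Consequently, partial summation introduces a factor of this size, and the black-box Huxley short-interval prime number theorem---which only saves arbitrary powers of $L$---cannot absorb it. Your identification of the source of the constraint $\theta>19/24$ (namely ``$19/24>7/12$'') is therefore also off; if bounded variation held, any $\theta>7/12$ would already suffice.

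The paper repairs this in two steps. First, it invokes Lemma~\ref{lem4.4} (with $a=0$, $q=1$, $\psi=\chi$) to dispose of the range $|\beta|\ge x^{-2\theta+\eps}$, where $f(\beta;\varpi\chi)\ll x^{\theta-\eps/3}$ directly. Second, for $|\beta|\le x^{-2\theta+\eps}$ it does \emph{not} appeal to a packaged short-interval PNT but instead inserts the explicit formula before doing the partial summation, obtaining
\[
  f(\beta;\varpi\chi)-\delta_\chi v(\beta)\ll x^{\theta}\sum_{|\im(\rho)|\le T}x^{\re(\rho)-1}+x^{\theta-\eps},\qquad T=x^{2-2\theta+3\eps}.
\]
The oscillation of $\mathrm e(\beta m^2)$ is thus absorbed into the height $T$ of the zero-sum rather than appearing as a multiplicative loss. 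Huxley's zero-density estimate $N(\chi;\alpha,T)\ll T^{12(1-\alpha)/5+\eps}$ together with the zero-free region and Siegel's theorem then bounds the zero-sum by $\exp(-cL^{1/5})$, provided $x^{-1}T^{12/5}<1$, i.e.\ $(12/5)(2-2\theta)<1$, which is exactly $\theta>19/24$. This is where the threshold truly enters.
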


\begin{proof}
  By the second part of Lemma \ref{lem4.6}, it suffices to show that
  \begin{equation}\label{eq4.13}
    \max_{|\beta| \le 1/Q} |f(\beta; \varpi\chi) - \delta_{\chi}v(\beta)| \ll x^{\theta}L^{-A-2B}
  \end{equation}
  for all characters $\chi$ with moduli $q \le L^B$, where $B = B(A)$ is the number appearing in \eqref{eq4.10}. When $|\beta| \ge x^{-2\theta + \eps}$, Lemma \ref{lem4.4} with $\sigma = \eps/2$, $\eps = \eps^2$, $a = 0$, $q = 1$, and $\psi = \chi$ yields
  \[
    f(\beta; \varpi\chi) \ll x^{\theta - \eps/3}.
  \]
  Thus, we may assume that $Q \ge x^{2\theta - \eps}$ in \eqref{eq4.13}. (In the case $r = 1$, we also need to estimate the main term $v(\beta)$ for $|\beta| \ge x^{-2\theta + \eps}$; that can be done using \eqref{eq5.20} below.)

  We now argue similarly to the proof of \cite[Lemma 3.3]{Lv05}. By partial summation and the arguments in \cite{Lv05}, we obtain
  \[
    f(\beta; \varpi\chi) - \delta_{\chi}v(\beta) \ll x^{\theta} \sum_{|\im(\rho)| \le T} x^{\re(\rho) - 1} + x^{\theta - \eps},
  \]
  where $T = x^{2 - 2\theta + 3\eps}$ and the summation is over the non-trivial zeros of the Dirichlet $L$-function $L(s, \chi)$. As is customary, let $N(\chi; \alpha, T)$ denote the number of zeros of $L(s,\chi)$ in the region
  \[
    |\im(s)| \le T, \quad \re(s) \ge \alpha.
  \]
  By the zero-free region for Dirichlet $L$-functions \cite[Theorem 1.10]{LiuZhan} and Siegel's theorem on exceptional zeros \cite[Theorem 1.12]{LiuZhan}, there exists a constant $c_0 > 0$ such that $N(\chi; \alpha, T) = 0$ when $\alpha \ge 1 - \eta(T)$, where $\eta(T) = c_0(\log T)^{-4/5}$. Therefore, by Huxley's zero-density theorem,
  \begin{align*}
    \sum_{|\im(\rho)| \le T} x^{\re(\rho) - 1} &= -\int_0^{1-\eta(T)} x^{\alpha-1} \, dN(\chi; \alpha, T) \\
    &\ll L\int_0^{1 - \eta(T)} x^{\alpha-1}N(\chi; \alpha, T) \, d\alpha + x^{-1}TL\\
    &\ll L\big( x^{-1}T^{12/5} \big)^{\eta(T)} + x^{-\eps} \ll \exp\big( -c_1L^{1/5} \big),
  \end{align*}
  provided that $\eps$ is chosen sufficiently small.
\end{proof}

\section{Proof of Theorem \ref{thm2}: The major arcs}
\label{sec5}

In this section, we establish \eqref{eq2.9} and describe the numbers $K_n$ and $C$ appearing there. First, we need to make some assumptions about the structure and asymptotic behavior of the functions $\lambda_1, \lambda_2$ and $\lambda_3$. We suppose that they satisfy hypothesis (ii) in Section \ref{sec4} and the following three additional hypotheses:
\begin{enumerate}
  \item [(iii)] There is a $z \ge x^{\sigma}$ such that $\lambda(m) = 0$ whenever $m$ is divisible by a prime $p < z$.
  \item [(iv)] Let $A, B > 0$ be fixed, let $\chi$ be a non-principal character modulo $q$, $q \le L^B$, and let $\mathcal I'$ be a subinterval of $\mathcal I$. Then
  \[
    \sum_{m \in \mathcal I'} \lambda(m)\chi(m) \ll x^{\theta}L^{-A}.
  \]
  \item [(v)] Let $A > 0$ be fixed and let $\mathcal I'$ be a subinterval of $\mathcal I$. There exists a constant $\kappa > 0$ such that
  \[
    \sum_{m \in \mathcal I'} \lambda(m) = \kappa|\mathcal I'|L^{-1} + O\big( x^{\theta}L^{-A} \big).
  \]
\end{enumerate}
To fully appreciate hypotheses (iv) and (v), one may consider what they say in the special case when $\lambda = \lambda_0$, the characteristic function of the primes. In that case, (iv) is a short-interval version of the Siegel--Walfisz theorem, and (v), with $\kappa_0 = 1$, is a short-interval version of the Prime Number Theorem.

Under hypotheses (ii)--(v), the evaluation of the right side of \eqref{eq2.9} is similar to (although somewhat more technical than) the analysis of the major arcs in earlier work on the problem. Let $\kappa_i$ denote the constant $\kappa$ appearing in hypothesis (v) for $\lambda_i$, and define the singular integral $\mathfrak J(n)$ and the singular series $\mathfrak S(n)$ by
\begin{gather*}
  \mathfrak J(n) = \int_0^1 v(\beta)^4\mathrm e(-\beta n) \, d\beta, \quad
  \mathfrak S(n) = \sum_{q = 1}^{\infty} A(n; q), \\
  A(n;q) = \phi(q)^{-4}\sum_{\substack{1 \le a \le q\\(a, q)=1}} S(q,a)^4\mathrm e(-an/q), \notag
\end{gather*}
where $v(\beta)$ is defined by \eqref{eq4.12} and
\[
  S(q,a) = \sum_{\substack{1 \le h \le q\\ (h,q) = 1}} \mathrm e(ah^2/q).
\]
We establish the following result.

\begin{proposition}\label{prop1}
  Let $\theta$ and $\sigma$ be fixed real numbers with
  \begin{equation}\label{eq5.1}
    19/24 < \theta < 1 \quad \text{and} \quad 0 < \sigma \le \theta - 31/40,
  \end{equation}
  and let the major arcs $\mathfrak M$ be given by \eqref{eq2.4} and \eqref{eq2.5}. Suppose further that the arithmetic functions $\lambda_j$ and $\lambda_k$, $0 \le j,k \le 3$, satisfy hypotheses (ii)--(v) and the exponential sums $g_j(\alpha)$ and $g_k(\alpha)$ are defined by \eqref{eq5.2} below. Then
  \[
    \int_{\mathfrak M} f_0(\alpha)^2g_j(\alpha)g_k(\alpha)\mathrm e(-\alpha n) \, d\alpha
    = \mathfrak S(n)\mathfrak I(n) \big( \kappa_j\kappa_k + O \big( L^{-1} \big) \big).
  \]
\end{proposition}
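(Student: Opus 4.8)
The plan is to follow the classical major-arc analysis of the Waring–Goldbach problem (as in Liu and Zhan \cite[Ch.~6]{LiuZhan}), but carried out with the sieve-modified generating functions $g_i$ in place of the $f_i$. First I would unfold the major arcs: writing $\alpha = a/q + \beta$ with $1 \le a \le q \le P$, $(a,q)=1$, and $|\beta| \le 1/(qQ)$, the integral becomes
\[
  \sum_{q \le P} \sum_{\substack{1 \le a \le q\\(a,q)=1}} \mathrm e(-an/q)
  \int_{|\beta| \le 1/(qQ)} f_0(a/q+\beta)^2 g_j(a/q+\beta) g_k(a/q+\beta) \mathrm e(-\beta n) \, d\beta.
\]
The first substantive step is to replace each of $f_0$ and $g_i$ on $\mathfrak{M}(q,a)$ by its expected main term. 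For $f_0(a/q+\beta)$ one has the standard approximation $\phi(q)^{-1} S(q,a) v(\beta) + \text{error}$; this is where hypothesis~(iv) and Lemma~\ref{lem4.7} enter, the latter supplying the crucial control of the error for small moduli $q \le L^B$ while the former (Siegel–Walfisz type) handles the rest after orthogonality of characters. For $g_i$ one uses hypothesis~(iv) together with Lemmas~\ref{lem4.5} and~\ref{lem4.6} (applicable because the $\lambda_i$ satisfy hypothesis~(ii)) to get the analogous approximation with $v(\beta)$ replaced by $\kappa_i v(\beta)$; here the decomposition $f_i = g_i + h_i$ with the bound \eqref{eq2.8} is what makes $g_i$ (rather than $f_i$) amenable to these lemmas, since the condition \eqref{eq5.4} referenced in Section~\ref{sec2} is satisfied only by $g_i$.

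After these replacements the integrand on $\mathfrak M(q,a)$ is, up to an admissible error, $\kappa_j \kappa_k \phi(q)^{-4} S(q,a)^4 v(\beta)^4 \mathrm e(-\beta n)$. Summing over $a$ and $q$ then produces $\kappa_j \kappa_k$ times the truncated singular series $\sum_{q \le P} A(n;q)$ and the truncated singular integral $\int_{|\beta| \le 1/Q} v(\beta)^4 \mathrm e(-\beta n)\, d\beta$, so the next step is to complete each: extending the singular series to $q = \infty$ costs $O(P^{-c})$ by the standard bound $A(n;q) \ll q^{-3/2+\eps}$ for $(n,q)$-admissible $q$, and extending the singular integral to all of $[0,1]$ costs a power-of-$x$ saving via the bound $v(\beta) \ll x^{\theta}(1+x^{2\theta}|\beta|)^{-1/2}$ — that is, via \eqref{eq5.20}. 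This gives $\mathfrak S(n) \mathfrak I(n)(\kappa_j\kappa_k + O(L^{-1}))$ after one checks $\mathfrak S(n) \gg 1$ for $n \in \mathcal H_4$, so that the absolute errors accumulated above are indeed $O(L^{-1})$ relative to the main term (here one uses $\mathfrak I(n) \asymp x^{3\theta - 1} L^{-4}$, which follows from the short-interval structure of $v$).

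The book-keeping is routine, but I expect the main obstacle to be the bound on the error terms $h_i$ and the verification that Lemmas~\ref{lem4.5}--\ref{lem4.7} can genuinely be applied: one must check that the parameters $P = x^{2\sigma-\eps}$ and $Q = x^{2\theta}P^{-1}$ from \eqref{eq2.4} satisfy the hypotheses $P \le x^{\theta - 11/20 - \eps}$, $PQ^{-1} \le x^{2\theta - 31/10 - \eps}$, $PQ \le x^{1+\theta}$, and $Q \ge x^{31/20+\eps}$ — this is precisely where the constraints $19/24 < \theta < 1$ and $0 < \sigma \le \theta - 31/40$ in \eqref{eq5.1} come from, and getting all four inequalities to hold simultaneously is the delicate point. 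Once those are secured, the replacement of $f_0$ by its main term with acceptable error for $q$ in the awkward range $1 \le q \le L^B$ (handled by Lemma~\ref{lem4.7}) and for $L^B < q \le P$ (handled by the $\delta_\chi = 0$ part, i.e. hypothesis (iv) and Lemma~\ref{lem4.6}) is the technical heart of the argument; the corresponding step for $g_i$ is entirely analogous since the $\lambda_i$ satisfy the same hypotheses. Everything else — orthogonality, summing the singular series, completing the singular integral — is standard.
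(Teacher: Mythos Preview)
Your approach is essentially the same as the paper's and is correct in outline. Two points deserve correction or amplification.

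First, the bound you quote for $v(\beta)$ is wrong: \eqref{eq5.20} actually reads $v(\beta) \ll x^{\theta}L^{-1}(1 + x^{1+\theta}\|\beta\|)^{-1}$, not $x^{\theta}(1+x^{2\theta}|\beta|)^{-1/2}$. With your stated bound the tail $\int_{|\beta|>1/(qQ)} |v(\beta)|^4\,d\beta$ would be of size $x^{2\theta}$, far too large; it is the factor $x^{1+\theta}$ (coming from partial summation over $m \sim x$ in an interval of length $x^{\theta}$) and the exponent $-1$ that make both the completion of $\mathfrak J(n;1/(qQ))$ to $\mathfrak J(n)$ and the treatment of the mixed error terms involving $f_0^*$ go through.

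Second, after expanding $g_i$ by characters via \eqref{eq5.4} and reducing to primitive characters, there is an error term: if $\chi$ mod $q$ is induced by $\chi^*$ mod $r$, then $f(\beta;\lambda_i\chi)$ and $f(\beta;\lambda_i\chi^*)$ differ on the $m$ with $(m,q)>1$ and $(m,r)=1$. Hypothesis (iii) is what bounds this discrepancy by $O(x^{\theta}z^{-1})$ (and makes it vanish when $r>Pz^{-1}$); the paper packages this as a function $\Psi(r)$ and it spawns several additional cross-terms that must be handled separately (though easily, using $P \le z^2 x^{-4\eps}$). Your proposal never invokes hypothesis (iii), so this step is currently missing.
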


Using standard major arc techniques (see Liu and Zhan \cite[Lemma 8.3]{LiuZhan} and a variant of \cite[Lemma 6.4]{LiuZhan}), we find that when $n \in \mathcal H_4$ and $|n - X| \le X^{(\theta+1)/2}$,
\begin{equation}\label{eq5.1a}
  x^{3\theta-1}L^{-4} \ll \mathfrak S(n)\mathfrak J(n) \ll x^{3\theta-1}L^{-3}.
\end{equation}
Thus, we can use Proposition \ref{prop1} to deduce \eqref{eq2.9} with
\[
  K_n = x^{1-3\theta}L^{4}\mathfrak S(n)\mathfrak J(n)
  \quad \text{and} \quad C = \kappa_1 - \kappa_2\kappa_3.
\]
We now proceed to prove the proposition.

\subsection*{Proof of Proposition \ref{prop1}}

We commence our analysis of the major arcs by defining the exponential sums $g_i(\alpha)$ appearing in the statement (and in \eqref{eq2.9} \emph{via} $G(\alpha)$). Define the function $\omega$ on $\mathcal I \times \mathfrak M$ by
\[
  \omega(m, \alpha) = \begin{cases}
    0 & \text{if } \alpha \in \mathfrak M(q, a) \text{ and } (m, q) \ge x^{\sigma}, \\
    1 & \text{otherwise}.
  \end{cases}
\]
For $\alpha \in \mathfrak M$, we set
\begin{equation}\label{eq5.2}
  g_i(\alpha) = \sum_{m \in \mathcal{I}} \lambda_i(m)\omega(m, \alpha) \mathrm e(\alpha m^2),
\end{equation}
and then define $h_i(\alpha)$ by \eqref{eq2.7}. We note that it is convenient to include $i=0$ in this definition, even though in that case we have $g_0(\alpha) = f_0(\alpha)$. Let $\alpha$ be on a major arc $\mathfrak M(q, a)$. When $\lambda_i$ satisfies hypothesis (iii), the sum $g_i(\alpha)$ is supported on integers $m$ with $(m,q) = 1$. Furthermore, since an integer $q \le x^{2\sigma-\eps}$ can have at most one prime divisor $p \ge z$, the sum $h_i(\alpha)$ is supported on integers divisible by $p$ (if $p$ exists) and satisfies
\[
  h_i(\alpha) \ll \sum_{ \substack{ m \in \mathcal I\\ p \mid m}} |\lambda_i(m)| \ll x^{\theta - \sigma}.
\]
This verifies \eqref{eq2.8}. We remark that although the prime $p$ in this bound depends on the major arc $\mathfrak M(q, a)$, the bound itself is uniform.

When $0 \le i \le 3$, we define the function $f_i^*(\alpha)$ on $\mathfrak M$ by setting
\[
  f_i^*(\alpha) = \kappa_i\phi(q)^{-1}S(q,a) v(\alpha - a/q)
  \quad \text{if } \alpha \in \mathfrak M(q, a).
\]
This function is the major arc approximation to $f_i(\alpha)$ suggested by hypotheses (iv) and (v). We now proceed to show that we can replace the exponential sums $g_i(\alpha)$ in \eqref{eq2.9} by the respective $f_i^*(\alpha)$. We shall show that
\begin{align}\label{eq5.3}
  \int_{\mathfrak{M}} \big(f_0(\alpha)^2g_j(\alpha)g_k(\alpha) - f_0^*(\alpha)^2f_j^*(\alpha)f_k^*(\alpha)\big)
  \mathrm e(-\alpha n) \, d\alpha \ll x^{3\theta-1}L^{-A}
\end{align}
for any fixed $A>0$.

Let $\alpha \in \mathfrak M(q, a)$. Then, similarly to \cite[eq. (4.1)]{HaKu06} (it is here that we make use of the weights $\omega(m,\alpha)$), we have
\begin{equation}\label{eq5.4}
  g_i(\alpha) = \phi(q)^{-1}\sum_{\chi \mmod q} S(\chi,a)f(\alpha - a/q; \lambda_i\chi),
\end{equation}
where
\[
  S(\chi,a) = \sum_{h=1}^{q} \bar\chi(h) \mathrm e( ah^2/q ).
\]
Hence,
\begin{equation}\label{eq5.5}
  f_0(\alpha) = f_0^*(\alpha) + \Delta_0(\alpha), \quad g_i(\alpha) = f_i^*(\alpha) + \Delta_i(\alpha),
\end{equation}
with
\begin{gather*}
  \Delta_i(\alpha) = \phi(q)^{-1}\sum_{\chi \mmod q}
  S(\chi,a) W_i(\alpha - a/q, \chi), \\
  W_i(\beta, \chi) = f(\beta; \lambda_i\chi - \kappa_i\rho_{\chi}).
\end{gather*}
Here, $\rho_\chi(m) = (\log m)^{-1}$ or $0$ according as the character $\chi$ is principal or not. Using \eqref{eq5.5}, we can express the integral in \eqref{eq5.3} as the linear combination of eleven integrals of the form
\begin{equation}\label{eq5.6}
  \int_{\mathfrak M} f_0^*(\alpha)^{2-a}\Delta_0(\alpha)^a f_j^*(\alpha)^{1-b}\Delta_j(\alpha)^b
  f_k^*(\alpha)^{1-c}\Delta_k(\alpha)^c \mathrm e(-\alpha n) \, d\alpha,
\end{equation}
where $a \in \{ 0, 1, 2 \}$, $b,c \in \{ 0,1 \}$, and $a+b+c > 0$. The estimation of all those integrals follows the same pattern, so we shall focus on the most troublesome among them---namely,
\begin{equation}\label{eq5.7}
  \int_{\mathfrak M} \Delta_0(\alpha)^2\Delta_j(\alpha)\Delta_k(\alpha) \mathrm e(-\alpha n) \, d\alpha.
\end{equation}

We can rewrite \eqref{eq5.7} as the multiple sum
\begin{equation}\label{eq5.8}
  \sum_{q\le P}\sum_{\chi_1\mmod q}\cdots\sum_{\chi_4\mmod q}B(q;\chi_1,\dots,\chi_4)J(q;\chi_1,\dots,\chi_4),
\end{equation}
where
\[
  B(q;\chi_1,\dots,\chi_4)= \phi(q)^{-4} \sum_{\substack{ 1 \le a \le q\\ (a,q)=1}}
  S(\chi_1,a)\cdots S(\chi_4,a) \mathrm e( -an/q )
\]
and
\[
  J(q;\chi_1,\dots,\chi_4) = \int_{-1/(qQ)}^{1/(qQ)}
  W_0(\beta, \chi_1)W_0(\beta, \chi_2)W_j(\beta, \chi_3)W_k(\beta, \chi_4) \mathrm e(-\beta n) \, d\beta.
\]

We first reduce \eqref{eq5.8} to a sum over primitive characters. In general, if $\chi$ modulo $q$, $q\le P$, is induced by a primitive character $\chi^*$ modulo $r$, we have
\begin{equation}\label{eq5.9}
  W_0(\beta, \chi) = W_0(\beta, \chi^*)
\end{equation}
and, by hypothesis (iii),
\begin{equation}\label{eq5.10}
  W_i(\beta, \chi) = W_i(\beta, \chi^*) + O\big( x^{\theta}z^{-1} \big)
\end{equation}
for $i \ge 1$. The error term in \eqref{eq5.10} accounts for the terms in $f(\beta; \lambda_i\chi)$ with $m$ satisfying
\[
  m \in \mathcal{I}, \quad (m,q)>1, \quad (m,r)=1, \quad \lambda_j(m)\neq 0.
\]
In particular, that error term is superfluous when $r > Pz^{-1}$, as the set of such $m$ is then empty. Thus, we can strengthen \eqref{eq5.10} to
\begin{equation}\label{eq5.11}
  W_i(\beta, \chi) = W_i(\beta, \chi^*) + O( \Psi(r) ),
\end{equation}
where
\[
  \Psi(r) = \begin{cases}
    x^{\theta}z^{-1} & \text{if } r \le Pz^{-1}, \\
    0                & \text{if } r > Pz^{-1}.
  \end{cases}
\]
Given a character $\chi$ modulo $r$, we define
\begin{gather*}
  V_i(\chi) = \max_{|\beta| \le 1/(rQ)} |W_i(\beta, \chi)| ,\\
  W_i(\chi) = \bigg( \int_{-1/(rQ)}^{1/(rQ)}|W_i(\beta, \chi)|^2 \, d\beta \bigg)^{1/2}.
\end{gather*}
Let $\chi_i^*$ modulo $r_i$, $r_i \mid q$, be the primitive character inducing $\chi_i$ and set $q_0 = [r_1, \dots, r_4]$. By \eqref{eq5.9} and \eqref{eq5.11},
\begin{align} \label{eq5.12}
  J(q; \chi_1,\dots,\chi_4) \ll {}& V_0(\chi_1^*) V_0(\chi_2^*) W_j(\chi_3^*) W_k(\chi_4^*) \notag\\
  &+ \Psi(r_3)V_0(\chi_1^*) W_0(\chi_2^*) W_k(\chi_4^*) \notag\\
  &+ \Psi(r_4)V_0(\chi_1^*) W_0(\chi_2^*) W_j(\chi_3^*) \notag\\
  &+ \Psi(r_3)\Psi(r_4)W_0(\chi_1^*) W_0(\chi_2^*).
\end{align}
Let $J_i^*(\chi_1^*, \dots, \chi_4^*)$, $1 \le i \le 4$, denote the $i$th term on the right side of \eqref{eq5.12}. The sum \eqref{eq5.8} does not exceed
\begin{equation}\label{eq5.13}
  \sum_{1 \le i \le 4} \sumstar{r_1,\chi_1} \cdots \sumstar{r_4,\chi_4}
  J_i^*(\chi_1, \dots, \chi_4) B_0(\chi_1,\dots,\chi_4),
\end{equation}
with
\[
  B_0(\chi_1,\dots,\chi_4) = \sum_{\substack{q\le P\\ q_0 \mid q}} |B(q; \chi_1,\dots,\chi_4)|.
\]
By \cite[Lemma 6.3]{LiuZhan},
\[
  B_0(\chi_1,\dots,\chi_4) \ll q_0^{-1+\eps}L^c.
\]
Hence, the sum \eqref{eq5.13} is
\begin{equation}\label{eq5.14}
  \ll L^c \sum_{1 \le i \le 4} \sumstar{r_1,\chi_1} \cdots \sumstar{r_4,\chi_4}
  q_0^{-1+\eps} J_i^*(\chi_1, \dots, \chi_4).
\end{equation}

Before we proceed to estimate the last sum, we stop to remark that inequalities \eqref{eq5.1} ensure that Lemmas \ref{lem4.5}--\ref{lem4.7} are applicable with $P$ and $Q$ given by \eqref{eq2.4}. Indeed, altogether the three lemmas require that ${19}/{24} < \theta < 1$ and that $P$ and $Q$ satisfy the following inequalities:
\[
  Q \ge x^{31/20+\eps}, \quad
  P \le \min\big( x^{\theta-11/20-\eps}, x^{2\theta-31/10-\eps}Q, x^{1+\theta}Q^{-1} \big).
\]
We first note that when $Q \ge x^{31/20+\eps}$, we have $x^{1+\theta}Q^{-1} \le x^{\theta-11/20-\eps}$; hence, the condition $P \le x^{\theta-11/20-\eps}$ is superfluous. Further, when $P$ and $Q$ are chosen according to \eqref{eq2.4}, the condition $PQ \le x^{1+\theta}$ follows from the assumption that $\theta < 1$. The remaining two constraints,
\[
  Q \ge x^{31/20+\eps} \quad \text{and} \quad P \le x^{2\theta-31/10-\eps}Q,
\]
are satisfied if $\theta - \sigma \ge 31/40$.

Next we estimate \eqref{eq5.14} by a standard iterative procedure. We write $\Sigma_i$ for the $i$th term in \eqref{eq5.14} and focus on $\Sigma_1$. We have
\[
  \sumstar{r_4,\chi_4} [g,r_4]^{-1+\eps} W_k(\chi_4) \ll \Sigma(g; \lambda_k) + g^{-1+\eps}I_k,
\]
where $\Sigma(g; \lambda)$ is the sum appearing on the left side of \eqref{eq4.4} and
\begin{align*}
  I_k^2 = \int_{-1/Q}^{1/Q} |v(\beta)|^2 \, d\beta
  &\ll \sum_{m_1,m_2 \in \mathcal I} \frac {L^{-2}}{Q + |m_1^2 - m_2^2|}\\
  &\ll x^{\theta-1} + x^{\theta}Q^{-1} \ll x^{\theta-1}.
\end{align*}
Using this bound for $I_k$ and Lemma \ref{lem4.5}, we conclude that
\[
  \sumstar{r_4,\chi_4} [g,r_4]^{-1+\eps} W_k(\chi_4) \ll g^{-1+2\eps}x^{(\theta-1)/2}L^c.
\]
By this inequality and the analogous bound for the sum over $r_3,\chi_3$, we see that
\begin{equation}\label{eq5.15}
  \Sigma_1 \ll x^{\theta - 1}L^c \sumstar{r_1,\chi_1} V_0(\chi_1) \sumstar{r_2,\chi_2} [r_1, r_2]^{-1+3\eps} V_0(\chi_2).
\end{equation}
We now use Lemma \ref{lem4.6} to estimate the inner sum in \eqref{eq5.15} and obtain
\begin{equation}\label{eq5.16}
  \Sigma_1 \ll x^{2\theta - 1}L^c \sumstar{r_1,\chi_1} r_1^{-1+4\eps} V_0(\chi_1).
\end{equation}
Finally, we apply Lemma \ref{lem4.7} to the last sum and conclude that $\Sigma_1 \ll x^{3\theta-1}L^{-A}$ for any fixed $A > 0$.

The estimation of the sums $\Sigma_i$, with $i \ge 2$, is similar and, in fact, simpler than that of $\Sigma_1$. We demonstrate the necessary changes in the case of $\Sigma_2$. We follow the above argument until we reach \eqref{eq5.16} which is now replaced by
\[
  \Sigma_2 \ll x^{2\theta - 1}L^c \sumstar{r_3,\chi_3} r_3^{-1+4\eps}\Psi(r_3)
  \ll x^{3\theta-1}z^{-2}P^{1 + 4\eps}L^c.
\]
This bound is $\ll x^{3\theta-1-\eps}$ provided that $P \le z^2x^{-4\eps}$, for example. Similar arguments show that $\Sigma_3$ and $\Sigma_4$ are also $\ll x^{3\theta-1-\eps}$. Therefore, the integral \eqref{eq5.7} is $O(x^{3\theta-1}L^{-A})$ for any fixed $A > 0$.

We can argue similarly to estimate other integrals of the form \eqref{eq5.6} which include at least one factor $\Delta_0(\alpha)$ (i.e., where $a > 0$). When no such factor is present, we need to adjust the above argument slightly to make use of hypotheses (iv) and (v) about the sieve functions $\lambda_i$. Let us consider one such integral---say,
\begin{equation}\label{eq5.17}
  \int_{\mathfrak{M}} f_0^*(\alpha)^2\Delta_j(\alpha)\Delta_k(\alpha) \mathrm e(-\alpha n) \, d\alpha.
\end{equation}
This integral equals
\begin{equation}\label{eq5.18}
  \sum_{q\le P}\sum_{\chi_3\mmod q}\sum_{\chi_4\mmod q}B(q;\chi_0,\chi_0,\chi_3,\chi_4)\tilde J(q;\chi_3,\chi_4),
\end{equation}
where $\chi_0$ denotes the principal character modulo $q$ (hence, $S(\chi_0,a) = S(q,a)$) and
\[
  \tilde J(q;\chi_3,\chi_4) = \int_{-1/(qQ)}^{1/(qQ)}
  v(\beta)^2W_j(\beta, \chi_3)W_k(\beta, \chi_4) \mathrm e(-\beta n) \, d\beta.
\]
Passing to primitive characters, we obtain a variant of \eqref{eq5.14} for the sum \eqref{eq5.18}. The terms corresponding to $\Sigma_2, \Sigma_3$ and $\Sigma_4$ in \eqref{eq5.14} can be estimated as before, so we concentrate on the remaining sum,
\begin{equation}\label{eq5.19}
  \sumstar{r_3, \chi_3}\sumstar{r_4,\chi_4} \tilde q_0^{-1+\eps}
  \int_{-1/(\tilde q_0Q)}^{1/(\tilde q_0Q)} |v(\beta)^2W_j(\beta, \chi_3)W_k(\beta, \chi_4)| \, d\beta.
\end{equation}
where $\tilde q_0 = [r_3,r_4]$. Using \cite[Lemma 1.19]{LiuZhan}, we get
\begin{equation}\label{eq5.20}
  v(\beta) \ll \frac {x^{\theta}L^{-1}}{1 + x^{1+\theta}\|\beta\|}.
\end{equation}
Thus, \eqref{eq5.19} is bounded above by
\begin{equation}\label{eq5.21}
  x^{2\theta} \sumstar{r_3, \chi_3}\sumstar{r_4,\chi_4} \tilde q_0^{-1+\eps}
  \int_{-1/(\tilde q_0Q)}^{1/(\tilde q_0Q)} |W_j(\beta, \chi_3)W_k(\beta, \chi_4)| \, \frac {d\beta}{(1 + x^{1+\theta}|\beta|)^2}.
\end{equation}

We now split the last sum in two. First, we consider the terms in \eqref{eq5.21} with $\max(r_3, r_4) \ge L^{B_1}$, where $B_1 = B_1(A) > 0$ is to be chosen shortly. Without loss of generality, we may assume that $r_3 \ge r_4$. In this case, we note that the integral over $\beta$ in \eqref{eq5.21} is $\ll x^{-1-\theta}V_j(\chi_3)V_k(\chi_4)$. We apply Lemma \ref{lem4.6} first to the sum over $r_4,\chi_4$ and then to the one over $r_3,\chi_3$. We conclude that the contribution to \eqref{eq5.21} from terms with $\max(r_3, r_4) \ge L^{B_1}$ is $O(x^{3\theta-1}L^{-A})$ for any fixed $A > 0$, provided that $B_1$ is sufficiently large. To be more precise, it suffices to choose $B_1 = B(A + c)$, where $c > 0$ is an absolute constant and $B(A)$ is the function of $A$ appearing in the second part of Lemma \ref{lem4.6}.

We now turn to the terms in \eqref{eq5.21} with $\max(r_3, r_4) \le L^{B_1}$. The contribution to \eqref{eq5.21} from such moduli does not exceed
\begin{equation}\label{eq5.22}
  x^{2\theta}L^{3B_1} \int_{-1/Q}^{1/Q} |W_j(\beta, \chi_3)W_k(\beta, \chi_4)| \, \frac {d\beta}{(1 + x^{1+\theta}|\beta|)^2},
\end{equation}
for some characters $\chi_3$ and $\chi_4$ with moduli $\le L^{B_1}$. Let $B_2 = 3B_1 + A$ and $Q_0 = x^{1+\theta}L^{-B_2}$. The contribution to \eqref{eq5.22} from $\beta$ with $|\beta| \ge Q_0^{-1}$ can be estimated trivially as
\[
  \ll x^{4\theta}L^{3B_1} \int_{1/Q_0}^{\infty} \frac {d\beta}{(1 + x^{1+\theta}|\beta|)^2}
  \ll x^{3\theta-1}L^{-A}.
\]
Finally, we estimate the contribution to \eqref{eq5.22} from $\beta$ with $|\beta| \le Q_0^{-1}$. By partial summation,
\[
  W_j(\beta, \chi) \ll \big( 1+x^{1+\theta}|\beta| \big)
  \sup_{\mathcal I'} \bigg| \sum_{m \in \mathcal I'} (\lambda_j(m) - \kappa_j\rho_\chi(m)) \chi(m) \bigg|,
\]
where the supremum is over all the subintervals $\mathcal I'$ of $\mathcal I$. When the character $\chi$ has a modulus $r \le L^{B_1}$, the above sum can be estimated by hypothesis (iv) or (v) with $A$ replaced by $3B_1 + A + 1$. Thus, the contribution to \eqref{eq5.22} from $|\beta| \le Q_0^{-1}$ is
\[
  \ll x^{4\theta}L^{-A-1} \int_{-1/Q_0}^{1/Q_0} \frac {d\beta}{1 + x^{1+\theta}|\beta|} \ll x^{3\theta-1}L^{-A}.
\]
This concludes the estimation of \eqref{eq5.17}. Thus, we have now established \eqref{eq5.3} for any fixed $A > 0$.

Finally, we evaluate
\begin{equation}\label{eq5.23}
  \int_{\mathfrak{M}} f_0^*(\alpha)^2f_j^*(\alpha)f_k^*(\alpha) \mathrm e(-\alpha n) \, d\alpha
  = \kappa_j\kappa_k \sum_{q \le P}A(n;q) \mathfrak J(n;1/(qQ)),
\end{equation}
where
\[
  \mathfrak J(n;\Delta) = \int_{-\Delta}^{\Delta} v(\beta)^4\mathrm e(-\beta n) \, d\beta.
\]
Using \eqref{eq5.20} and the bound
\[
  A(n;q) \ll (n,q)^{1/2}q^{-3/2+\eps},
\]
we can first replace $\mathfrak J(n;1/(qQ))$ by $\mathfrak J(n) = \mathfrak J(n;\frac 12)$ and then extend the summation over $q$ to~$\infty$. Since $PQ \le x^{1+\theta}$, this yields
\begin{equation}\label{eq5.24}
  \sum_{q \le P}A(n;q) \mathfrak J(n;1/(qQ)) = \mathfrak S(n)\mathfrak J(n) + O\big( x^{3\theta - 1}P^{-1/2+\eps} \big).
\end{equation}
The proposition follows from \eqref{eq5.3}, \eqref{eq5.23} and \eqref{eq5.24}. \qed

\section{Proof of Theorem \ref{thm2}: The sieve}
\label{sec6}

In this section, we complete the proof of Theorem \ref{thm2}. Let $8/9 < \theta < 1$ and set $\sigma = (2\theta - 1)/7$.
We note that we then have
\[
  1 - \theta < \sigma < \min( (3\theta - 2)/6, \theta - 31/40 ).
\]
We construct sieve weights $\lambda_1, \lambda_2$ and $\lambda_3$ that satisfy \eqref{eq2.1} and the various hypotheses required by the analysis in Sections \ref{sec2}, \ref{sec3} and \ref{sec5}. Since the construction is driven by our need to have $\lambda_1$ and $\lambda_3$ satisfy hypothesis (i$^*$) in Section \ref{sec4}, it is convenient to set
\[
  U = x^{1 - \theta + 2\sigma}, \quad V = x^{\theta - 4\sigma}, \quad z = x^{2\theta - 1 - 6\sigma}.
\]
When $\theta$ and $\sigma$ are as above, these quantities satisfy
\[
  x^{1/9} < x^{\sigma} = z < z^2 < U < z^3 < V = Uz < x^{4/9}.
\]
Our construction uses repeatedly Buchstab's identity from sieve theory, which can be stated as
\begin{equation}\label{eq6.1}
  \psi(m,z_1)=\psi(m,z_2)-\sum_{z_2 \le p < z_1}\psi(m/p,p) \qquad (2\le z_2<z_1),
\end{equation}
in view of our definition of $\psi(m, z)$ (recall \eqref{eq1.8} and the convention on non-integer $m$).

We start from the simple observation that when $m \in \mathcal I$, we have $\varpi(m) = \psi(m, x_1^{1/2})$, $x_1 = x + x^{\theta}$. Thus, Buchstab's identity yields
\begin{align}\label{eq6.2}
  \varpi(m) &= \psi(m,z)
  - \bigg\{  \sum_{z \le p < U} + \sum_{U \le p \le V} + \sum_{V<p<x_1^{1/2}} \bigg\} \, \psi(m/p,p) \notag\\
  &= \gamma_1(m) - \gamma_2(m) - \gamma_3(m) - \gamma_4(m), \quad\text{say}.
\end{align}
We remark that $\gamma_1$ and $\gamma_3$ satisfy hypothesis (i$^*$). We decompose $\gamma_2$ further using Buchstab's identity again. We have
\begin{align}\label{eq6.3}
  \gamma_2(m) &= \sum_{z \le p< U} \psi(m/p,z) - \sum_{z \le p_2 < p_1 < U}
  \bigg\{\sum_{p_1p_2<U} + \sum_{U\le p_1p_2\le V} + \sum_{p_1p_2>V} \bigg\} \, \psi(m/(p_1p_2),p_2) \notag\\
  &= \gamma_5(m) - \gamma_6(m) - \gamma_7(m) - \gamma_8(m), \quad\text{say}.
\end{align}
Here, $\gamma_5$ and $\gamma_7$ satisfy hypothesis (i$^*$). We decompose $\gamma_6$ further as
\begin{align}\label{eq6.4}
  \gamma_6(m) &= \sum_{z \le p_2 < p_1 < p_1p_2<U} \psi(m/(p_1p_2),z) \notag\\
  &\phantom{={}}- \sum_{z\le p_2<p_1<p_1p_2<U} \bigg\{ \sum_{ \substack{ z\le p_3<p_2 \\ p_1p_2p_3\le V}}
  + \sum_{ \substack{ z\le p_3<p_2 \\ p_1p_2p_3 > V}} \bigg\} \psi(m/(p_1p_2p_3), p_3) \notag\\
  &= \gamma_{9}(m) - \gamma_{10}(m) - \gamma_{11}(m),\quad\text{say}.
\end{align}
Note that the condition $p_1p_2p_3 \ge U$ is implicit in $\gamma_{10}$ because $z^3 > U$. Finally, we combine \eqref{eq6.2}--\eqref{eq6.4} and deduce that
\begin{equation}\label{eq6.5}
  \varpi(m) = \lambda_1(m) - \lambda_2(m) + \gamma_8(m) \ge \lambda_1(m) - \lambda_2(m),
\end{equation}
where
\begin{align*}
  \lambda_1(m) &= \gamma_1(m) - \gamma_3(m) - \gamma_5(m) + \gamma_7(m) + \gamma_{9}(m) - \gamma_{10}(m),\\
  \lambda_2(m) &= \gamma_4(m) + \gamma_{11}(m) \ge 0.
\end{align*}
We remark that $\lambda_1$ is the sum of those $\gamma_i$'s which satisfy hypothesis (i$^*$), while $-\lambda_2$ collects the negative among the remaining terms in the decomposition of $\varpi$.

Next, we construct $\lambda_3$. Similarly to \eqref{eq6.2}--\eqref{eq6.4}, using three Buchstab decompositions, we have
\begin{align*}
  \varpi(m) &= \gamma_1(m) - \gamma_3(m) - \gamma_4(m) -
  \sum_{V^{1/2}< p < U} \psi(m/p,p) - \sum_{z \le p \le V^{1/2}}\psi(m/p,z) \\
  &\phantom{={}}+  \sum_{z\le p_2<p_1\le V^{1/2}} \psi(m/p_1p_2,z) -
  \sum_{z\le p_3<p_2<p_1\le V^{1/2}} \psi(m/p_1p_2p_3,p_3) \\
  &= \gamma_1(m) - \gamma_3(m) - \gamma_4(m) - \gamma_5^*(m) - \gamma_6^*(m)
  + \gamma_7^*(m) - \gamma_8^*(m),\quad \text{say.}
\end{align*}
We note that $\gamma_1, \gamma_3, \gamma_6^*$ and $\gamma_7^*$ satisfy hypothesis (i$^*$). Let $\gamma_9^*(m)$ denote the portion of $\gamma_8^*(m)$ where either $p_1p_2 \ge U$ or $p_1p_2p_3 \le V$. Since $p_1p_2 \le V$ and $p_1p_2p_3 > z^3 > U$, $\gamma_9^*$ too satisfies hypothesis~(i$^*$). We now set
\begin{equation}\label{eq6.6}
  \lambda_3(m) = \gamma_1(m) - \gamma_3(m)- \gamma_6^*(m)
  + \gamma_7^*(m) - \gamma_9^*(m).
\end{equation}
Then $\lambda_3(m) \ge \varpi(m)$ and it also satisfies hypothesis (i$^*$).

We have now constructed sieve weights $\lambda_1, \lambda_2$ and $\lambda_3$ which are bounded and satisfy \eqref{eq2.1}. Moreover, $\lambda_1$ and $\lambda_3$ satisfy hypothesis~(i$^*$), and hence, by the discussion near the end of Section~\ref{sec4.1}, also hypothesis~(i). Next we verify that the functions $\lambda_1, \lambda_2$ and $\lambda_3$ satisfy the hypotheses~(ii)--(v) required in the proof of Propostion~\ref{prop1}.

\medskip

\paragraph*{\em Hypothesis (ii)} The argument in Kumchev \cite[Lemma 5.5]{Kumc06} establishes (essentially) that every convolution of the form
\begin{equation}\label{eq6.7}
  \lambda(m) = \sum_{\substack{ rs = m\\ r \sim R}} \xi_r\psi(s,w),
\end{equation}
where $|\xi_r| \le \tau(r)^c$, $1 \le R \le x^{11/20}$, and $w \le \sqrt{2x/R}$, satisfies hypothesis (ii). Furthermore, by Kumchev \cite[Remark 5.1]{Kumc06}, the same argument applies to convolutions where $w$ is an (explicit) prime divisor of $r$---as in $\gamma_7$ above, for example. We can use this observation to verify hypothesis (ii) for $\gamma_i$'s and $\gamma_i^*$'s, except for $\gamma_4$. Finally, since $\gamma_4$ is the characteristic function of products $m = p_1p_2$, with $V < p_1 \le p_2$, we can rewrite it as
\begin{equation}\label{eq6.8}
  \gamma_4(m) = \sum_{V < p < x_1^{1/2}} \psi(m/p, w_p), \quad w_p = \sqrt{x_1/p},
\end{equation}
after which we can again use the above observation to verify hypothesis (ii).

\paragraph*{\em Hypothesis (iii)} By construction, the functions $\lambda_i$ are supported on integers free of prime divisors $p < z = x^{\sigma}$. Hence, this hypothesis holds.

\paragraph*{\em Hypotheses (iv) and (v)} Together, these two hypotheses state that $\lambda$ satisfies an analogue of the Siegel--Walfisz theorem for short intervals of lengths $\gg x^{\theta}$. We derive these hypotheses from the classical Siegel--Walfisz theorem (see Harman \cite[Lemma 2.7]{Harm07} or Liu and Zhan \cite[Theorem 1.13]{LiuZhan}) and the following estimate:
\begin{enumerate}
  \item [(iv*)] Given any fixed $A, B > 0$, any Dirichlet character $\chi$ modulo $q \le L^B$, and any subinterval $\mathcal I'$ of $\mathcal I$, one has
\begin{equation}\label{eq6.9}
  \sum_{m \in \mathcal I'} \lambda(m)\chi(m) = \frac {|\mathcal I'|}{2y_0} \sum_{|m - x| \le y_0} \lambda(m)\chi(m)
  + O \big( x^{\theta}L^{-A} \big),
\end{equation}
where $y_0 = x\exp\big( -3(\log x)^{1/3} \big)$.
\end{enumerate}

Comparing this statement with the conclusions of Lemmas 7.2 and 10.13 in Harman \cite{Harm07}, one sees that one may verify condition (iv*) using results from the study of primes in short intervals in Harman \cite{Harm07}. Indeed, \cite[Lemma 10.13]{Harm07} and a slight modification of \cite[Lemma 7.5]{Harm07} (to allow for the presence of characters) establish (iv*) for intervals $\mathcal I'$ of length $y \ge x^{3/4 + \eps}$ and for any arithmetic function $\lambda$ of the form \eqref{eq6.7} with $|\xi_r| \le \tau(r)^c$, $1 \le R \le x^{7/12}$, and $w \le 2x^{1/3}$. As with hypothesis (ii), we can use this observation to verify (iv*) for $\gamma_1, \gamma_5, \gamma_7, \gamma_9, \gamma_{10}, \gamma_{11}, \gamma_6^*, \gamma_7^*$, and $\gamma_9^*$. We remark that in dealing with $\gamma_{11}$ we use the inequality $p_1p_2p_3 \le U^{3/2} < x^{7/12}$, which holds on the assumption that $\sigma \le (2\theta - 1)/7$ and $\theta > 4/5$. Furthermore, the condition $p \le 2x^{1/3}$ is implicit in $\gamma_3$, so (iv*) holds for that sum too. Finally, to show that $\gamma_4$ satisfies (iv*), we make use of \eqref{eq6.8} and note that $w_p \le 2x^{1/3}$ because $V > x^{1/3}$.

We have now verified that (iv*) holds for $\lambda_1, \lambda_2$ and $\lambda_3$. Applying the Siegel--Walfisz theorem to the right side of \eqref{eq6.9}, we obtain hypothesis (iv). Similarly, we obtain hypothesis (v) by combining \eqref{eq6.9} and the following lemma, which follows from the Prime Number Theorem by the inductive argument in Harman \cite[\S A.2]{Harm07}.

\begin{lemma}\label{lem6.1}
  Let $2\le z < y \le z^c$, let $\psi(n,z)$ be defined by \eqref{eq1.8}, and let $\omega(u)$ be the continuous solution of the differential delay equation
  \[
    \begin{cases}
      (u\omega(u))'=\omega(u-1) & \text{if } u > 2, \\
      \omega(u)=u^{-1}     & \text{if } 1 < u \le 2.
    \end{cases}
  \]
  Then
  \[
    \sum_{m \le y} \psi(m,z) = \frac 1{\log z} \sum_{z < m \le y} \omega \left( \frac {\log m}{\log z} \right)
    + O \big( y\exp \big( -(\log y)^{1/2} \big)\big).
  \]
\end{lemma}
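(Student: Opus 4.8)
The lemma is a quantitative Buchstab-type asymptotic for integers free of small prime factors, with an error term at the strength of the Prime Number Theorem, and I would prove it by induction on $c$, which we may take to be an integer $\ge2$ (enlarging $c$ only weakens the hypothesis $z<y\le z^c$). Set $\Phi(t,w)=\sum_{m\le t}\psi(m,w)$, so that the assertion reads $\Phi(y,z)=(\log z)^{-1}\sum_{z<m\le y}\omega(\log m/\log z)+O(y\exp(-(\log y)^{1/2}))$. Classifying each $m>1$ counted by $\Phi(y,z)$ according to its least prime factor $p$, which satisfies $z\le p\le y$ while $m/p$ satisfies $\psi(m/p,p)=1$, yields the exact recursion
\[
  \Phi(y,z)=1+\sum_{z\le p\le\sqrt y}\Phi(y/p,p)+\bigl(\pi(y)-\pi(\sqrt y)\bigr),
\]
where the last bracket collects the primes $p>\sqrt y$, for which $\Phi(y/p,p)=1$. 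Since $y\le z^c$ forces $y/p\le z^{c-1}\le p^{c-1}$ for each prime $p\ge z$, every term $\Phi(y/p,p)$ in the first sum (where $p\le\sqrt y<y/p$) is covered by the inductive hypothesis with $c$ decreased by one.

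For the base case $c=2$, the integers $m\le y\le z^2$ with $\psi(m,z)=1$ are $m=1$, the primes in $[z,y]$, and (if $z$ is prime and $z^2\le y$) $m=z^2$; hence $\Phi(y,z)=\pi(y)-\pi(z^-)+O(1)$. On the other side, $\omega(\log m/\log z)=\log z/\log m$ throughout the range, so the right-hand side equals $\sum_{z<m\le y}(\log m)^{-1}$ plus error. Comparing both expressions with $\mathrm{li}(y)-\mathrm{li}(z)$ by the Prime Number Theorem and Euler--Maclaurin summation settles this case, with an error of the shape $y\exp(-c(\log y)^{1/2})$.

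For the inductive step I would substitute the inductive hypothesis for each $\Phi(y/p,p)$ with $p\le\sqrt y$ and sum over $p$. The error so introduced is
\[
  \sum_{z\le p\le\sqrt y}\frac{y}{p}\exp\!\bigl(-\bigl(\log(y/p)\bigr)^{1/2}\bigr)\ll y(\log\log y)\exp\!\bigl(-\bigl(\tfrac12\log y\bigr)^{1/2}\bigr),
\]
and it remains to verify that the resulting main term,
\[
  1+\bigl(\pi(y)-\pi(\sqrt y)\bigr)+\sum_{z\le p\le\sqrt y}\frac{1}{\log p}\sum_{p<m\le y/p}\omega\!\left(\frac{\log m}{\log p}\right),
\]
agrees with $(\log z)^{-1}\sum_{z<m\le y}\omega(\log m/\log z)$ to within an admissible error. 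Replacing the discrete sums over $m$ by the integrals $\int\omega(\log t/\log z)\,dt/\log z$ (Euler--Maclaurin) and the sums over $p$ by $\int(\cdot)\,dt/\log t$ (Prime Number Theorem and partial summation), and performing the substitutions $t=z^w$, $s=z^v$, one finds, after differentiating in $u=\log y/\log z$, that the required identity is exactly the Buchstab delay equation $u\omega(u)=1+\int_1^{u-1}\omega(v)\,dv$ for $u\ge2$ (with the initial relation $\omega(u)=1/u$ for $1<u\le2$ covering the case $u\le2$), up to $z$-dependent boundary terms of size $O(z^{-u/2})$ that are negligible.

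The principal difficulty is the bookkeeping of error terms through the recursion: one must ensure that the boundedly many (at most $c$) invocations of the Prime Number Theorem, together with the accompanying $\log\log y$ factors and the losses in passing from prime sums to integrals, do not erode the bound below $y\exp(-(\log y)^{1/2})$. This is precisely why the exponent in the conclusion carries the constant $1$ rather than an optimal constant: invoking the Prime Number Theorem with the Vinogradov--Korobov error term $\exp(-c(\log t)^{3/5}(\log\log t)^{-1/5})$, which is $o\bigl(\exp(-(\log t)^{1/2})\bigr)$, leaves ample room to absorb these losses at each step and to recover the stated bound for $y$ sufficiently large. With this organisation the argument presents no essentially new difficulty and follows Harman~\cite[\S A.2]{Harm07}.
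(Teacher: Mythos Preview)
Your proposal is correct and follows essentially the same approach as the paper: the paper does not give a detailed proof but simply states that the lemma ``follows from the Prime Number Theorem by the inductive argument in Harman \cite[\S A.2]{Harm07},'' which is precisely the induction on $c$ via Buchstab's recursion that you have sketched. Your write-up is a faithful expansion of that reference, with the correct recursion, base case, and error-term bookkeeping.
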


For example, we have
\[
  \sum_{m \in \mathcal I'} \gamma_3(m) = \frac {|\mathcal I'|}{2y_0} \sum_{U \le p \le V} \frac {1}{\log p} \sum_{|mp - x| \le y_0} \omega\left( \frac {\log m}{\log p} \right) + O\big( x^{\theta}L^{-A} \big).
\]
Using that
\[
  \omega\left( \frac {\log m}{\log p} \right) - \omega\left( \frac {\log (x/p)}{\log p} \right) \ll y_0x^{-1}
\]
when $|mp - x| \le y_0$, we deduce
\begin{align*}
  \sum_{m \in \mathcal I'} \gamma_3(m) &= |\mathcal I'| \sum_{U \le p \le V} \frac {1}{p\log p} \omega\left( \frac {\log (x/p)}{\log p} \right) + O\big( x^{\theta}L^{-A} \big) \\
  &= |\mathcal I'|L^{-1} \int_{1-\theta+2\sigma}^{\theta-4\sigma} \omega\left( \frac {1 - u}u \right) \, \frac {du}{u^2} + O\big( x^{\theta}L^{-A} \big),
\end{align*}
where the second step follows from the Prime Number Theorem by partial summation. The same technique can be used to evaluate the contributions to hypothesis (v) from the other functions $\gamma_i$ and $\gamma_i^*$ that appear in the definitions of $\lambda_1, \lambda_2$ and $\lambda_3$.

\medskip

Having verified that the arithmetic functions $\lambda_1, \lambda_2$ and $\lambda_3$ satisfy hypotheses (i)--(v), we can now apply the arguments in Sections \ref{sec3}--\ref{sec5} to establish inequalities \eqref{eq2.6} and \eqref{eq2.9}. Thus, to complete the proof of the theorem, it remains to show that the constant $C = C(\theta) = \kappa_1 - \kappa_2\kappa_3$ in \eqref{eq2.9} is positive when $8/9 < \theta < 1$ and $\sigma = (2\theta - 1)/7$. Let $\ell_j$, $1 \le j \le 11$, denote the constant (depending on $\theta$) in the asymptotic formula
\[
  \sum_{|m - x| \le y_0} \gamma_j(m) = 2\ell_jy_0L^{-1} + O\big( x\exp\big( -L^{1/2} \big) \big),
\]
and let $\ell_j^*$ be defined similarly in terms of $\gamma_j^*$. By \eqref{eq6.5}, \eqref{eq6.6} and the Prime Number Theorem, we have
\[
  \kappa_1 - \kappa_2 = 1 - \ell_8, \quad \kappa_3 = 1 + \ell_4 + \ell_5^* + \ell_{10}^*,
\]
where $\gamma_{10}^*(m) = \gamma_8^*(m) - \gamma_9^*(m)$. Note that the conditions $p_2 \ge z$ and $p_1p_2 < U$ in $\gamma_{10}^*$ make the condition $p_1 \le V^{1/2}$ superfluous; after that condition is dropped, we have $\gamma_{10}^* = \gamma_{11}$. Hence,
\[
  C = 1 - \ell_8 - \kappa_2(\kappa_2 + \ell_5^*),
\]
with
\begin{gather*}
  \kappa_2 = \log \left( \frac {3 + \theta}{4 - \theta} \right)
  + \iiint_{\mathcal D_{11}} \omega\left( \frac {1 - u - v - w}w \right) \, \frac {dudvdw}{uvw^2}, \\
  \ell_8 = \iint_{\mathcal D_8} \omega\left( \frac {1 - u - v}v \right) \, \frac {dudv}{uv^2}, \quad
  \ell_5^* = \int_{\theta/2-2\sigma}^{1-\theta+2\sigma} \omega\left( \frac {1 - u}u \right) \, \frac {du}{u^2},
\end{gather*}
where $\mathcal D_8$ is the two-dimensional region defined by
\[
  \sigma \le v \le u \le 1 - \theta + 2\sigma, \quad u + v \ge \theta - 4\sigma,
\]
and $\mathcal D_{11}$ is the three-dimensional region defined by
\[
  \sigma \le w \le v \le u < u + v \le 1 - \theta + 2\sigma, \quad u + v + w \ge \theta - 4\sigma.
\]

To prove the positivity of $C(\theta)$, it suffices to estimate the $\kappa_2, \ell_8$ and $\ell_5^*$ from above. To that end, we note that the Buchstab function $\omega$ is positive and satisfies (see Harman \cite[eq. (1.4.16)]{Harm07})
\begin{equation}\label{eq6.10}
  \omega(u) \le \begin{cases}
    1/u               & \text{if } 1 \le u \le 2, \\
    (1 + \log(u-1))/u & \text{if } 2 < u \le 3, \\
    (1 + \log 2)/3    & \text{if } u > 3.
  \end{cases}
\end{equation}
(In fact, this inequality is an equality when $1 \le u \le 3$.) Using \eqref{eq6.10} and numerical integration to estimate $C(\theta)$, we find that
\[
  C(\theta) \ge \tilde C(\theta),
\]
where $\tilde C(\theta)$ is an increasing function of $\theta$ with $\tilde C(8/9) > 0.17$ (see Figure \ref{fig1}). This completes the proof of the theorem.

\begin{figure}
  \ifpdf \boxed{\includegraphics{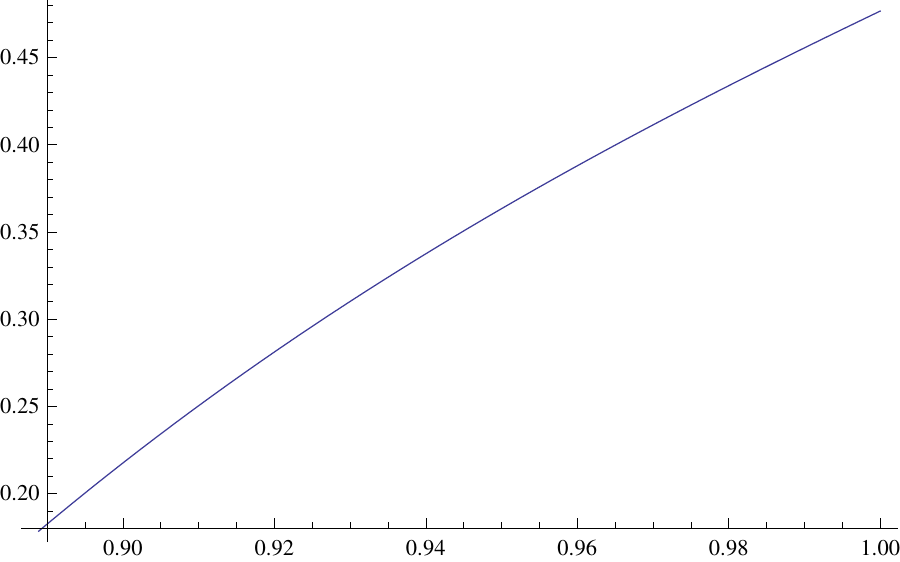}}
  \else  \boxed{\includegraphics{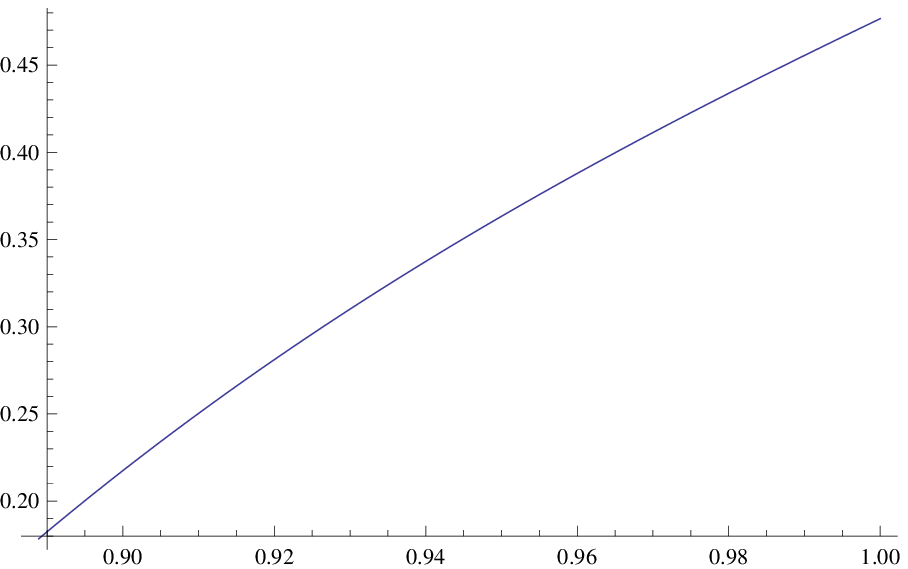}} \fi
  \hspace{.5in}
  \begin{tabular}[b]{cc}
  \toprule
  $\theta$    & $\tilde C(\theta)$ \\
  \midrule
  $1$    & $0.476$ \\
  $0.98$ & $0.433$ \\
  $0.96$ & $0.387$ \\
  $0.95$ & $0.363$ \\
  $0.94$ & $0.337$ \\
  $0.93$ & $0.310$ \\
  $0.92$ & $0.281$ \\
  $0.91$ & $0.250$ \\
  $0.90$ & $0.217$ \\
  $0.89$ & $0.182$ \\
  $8/9$  & $0.178$ \\
  \bottomrule
  \end{tabular}
  \caption{Graph and select values of $\tilde C(\theta)$, $8/9 \le \theta \le 1$.}\label{fig1}
\end{figure}

\section{The proofs of Theorems \ref{thm1}, \ref{thm3}, \ref{thm4}, and \ref{thm5}}
\label{sec7}

\subsection{Proof of Theorem \ref{thm1}}

Let $n$ be a large integer with $n \equiv 5 \pmod {24}$ and let $\theta = 8/9 + \eps$. We set $X = 4n/5$ and $H = X^{\theta/2}$. Then the set
\[
  \mathcal M = \big\{ n - p^2 \mid \big| p - (n/5)^{1/2} \big| < 0.5H \big\}
\]
contains $\gg X^{\theta/2}(\log X)^{-1}$ integers $m_p \in \mathcal H_4$ satisfying
\[
  |m_p - X| \le \big| p^2 - (n/5) \big| < 0.5H(1 + o(1))X^{1/2} < HX^{1/2}.
\]
Since $\theta/2 > (16-11\theta)/14 + \eps$, it follows from Theorem \ref{thm2} that there is an $m_p \in \mathcal M$ that can be represented as
\[
  m_p = p_1^2 + \dots + p_4^2, \qquad \big| p_j - (X/4)^{1/2} \big| \le H.
\]
Hence, $n$ can be represented
\[
  \begin{cases}
    n = p^2 + p_1^2 + \dots + p_4^2, & \\
    \big| p - (n/5)^{1/2} \big| < 0.5H, \; \big| p_j - (n/5)^{1/2} \big| < H. &
  \end{cases}
\]
This establishes Theorem \ref{thm1}.

\subsection{Proof of Theorem \ref{thm3}}

This proof shares many features with the proofs of Theorems \ref{thm4} and~\ref{thm5}, so we set the initial stages in more generality than is needed for Theorem \ref{thm3} itself. For $s \ge 3$ and a large $X$, we set
\[
  x = \sqrt{X/s}, \quad \mathcal I_s = \big( x-x^{\theta}, x + x^{\theta} \big],
\]
and we consider the sum
\[
  R_s(n) = \sum_{\substack{ p_1^2 + \dots + p_s^2 = n\\ p_i \in \mathcal I_s}} 1.
\]
Similarly to \eqref{eq2.3}, we have
\begin{equation}\label{eq7.1}
  R_s(n) = \bigg( \int_{\mathfrak M} + \int_{\mathfrak m} \bigg) f_0(\alpha)^s \mathrm e(-\alpha n) \, d\alpha,
\end{equation}
where $f_0(\alpha)$ is the exponential sum from Section \ref{sec2} (with the summation range $\mathcal I$ replaced by $\mathcal I_s$ when $s \ne 4$) and the sets $\mathfrak M$ and $\mathfrak m$ are defined in \eqref{eq2.4} and \eqref{eq2.5}. In particular, we introduce (\emph{via} \eqref{eq2.4}) a parameter $\sigma$, which will be specified shortly.)

When $s = 4$, Proposition \ref{prop1} with $j = k = 0$ yields
\begin{equation}\label{eq7.2}
  \int_{\mathfrak M}  f_0(\alpha)^4 \mathrm e(-\alpha n) \, d\alpha
  = \mathfrak S(n)\mathfrak I(n) \big( 1 + O \big( L^{-1} \big) \big),
\end{equation}
provided that $0 < \sigma \le \theta - 31/40$. Furthermore, using Lemma \ref{lem4.4} and \cite[Theorem 2]{LiZh96} (instead of Lemmas \ref{lem4.1}--\ref{lem4.3} above), we can show as in Section \ref{sec4.1} that
\begin{equation}\label{eq7.3}
  \sup_{\alpha \in \mathfrak m} |f_0(\alpha)| \ll x^{\theta - \sigma + \eps},
\end{equation}
provided that $0 < \sigma \le \min( (3\theta - 2)/6, (2\theta - 1)/8, (10\theta - 7)/15 )$. Let $Y = X^{(1+\theta)/2}$ and
\[
  \sigma =\min( \theta - 31/40, (2\theta - 1)/8 ).
\]
Using Bessel's inequality, \eqref{eq3.1} and \eqref{eq7.3}, we obtain
\begin{equation}\label{eq7.4}
  \sum_{|n - X| \le Y} \bigg| \int_{\mathfrak m}  f_0(\alpha)^4 \mathrm e(-\alpha n) \, d\alpha \bigg|^2 \le \int_{\mathfrak m} |f_0(\alpha)|^8 \, d\alpha \ll x^{6\theta - 4\sigma + \eps}.
\end{equation}
In particular, we have
\begin{equation}\label{eq7.5}
  \int_{\mathfrak m}  f_0(\alpha)^4 \mathrm e(-\alpha n) \, d\alpha \ll x^{3\theta - 1 - \eps}
\end{equation}
for all but $O\big( X^{1 - 2\sigma + \eps} \big)$ values of $n$ with $|n - X| \le Y$. Combining \eqref{eq7.1}, \eqref{eq7.2} and \eqref{eq7.5}, we conclude that
\begin{equation}\label{eq7.6}
  R_4(n) = \mathfrak S(n)\mathfrak I(n) \big( 1 + O \big( L^{-1} \big) \big)
\end{equation}
for all but $O\big( X^{1 - 2\sigma + \eps} \big)$ values of $n \in \mathcal H_4$ with $|n - X| \le Y$. Theorem \ref{thm3} follows from \eqref{eq7.6} and~\eqref{eq5.1a}.

\subsection{Proof of Theorem \ref{thm5}}

We start from \eqref{eq7.1} with $s \ge 6$, $\theta = \theta_s + \eps$, $\sigma = \theta - 31/40$, and $X = n$. With these choices, it is not difficult to generalize the argument of Proposition \ref{prop1} to show that
\begin{equation}\label{eq7.7}
  \int_{\mathfrak M}  f_0(\alpha)^s \mathrm e(-\alpha n) \, d\alpha
  = \mathfrak S_s(n)\mathfrak I_s(n) \big( 1 + O \big( L^{-1} \big) \big).
\end{equation}
Here, $\mathfrak S_s(n)$ and $\mathfrak I_s(n)$ are $s$-dimensional variants of the singular series and the singular integral from Section \ref{sec5} which satisfy the bounds
\begin{equation}\label{eq7.8}
  x^{(s-1)\theta - 1}L^{-s} \ll \mathfrak S_s(n)\mathfrak I_s(n) \ll x^{(s-1)\theta - 1}L^{-s}
\end{equation}
whenever $n \in \mathcal H_s$. On the other hand, by \eqref{eq3.1} and \eqref{eq7.3},
\begin{equation}\label{eq7.9}
  \int_{\mathfrak m} |f_0(\alpha)|^s \, d\alpha \ll x^{(s-2)\theta - (s-4)\sigma + \eps}.
\end{equation}
Combining \eqref{eq7.1} and \eqref{eq7.7}--\eqref{eq7.9}, we obtain
\[
  R_s(n) \gg x^{(s-1)\theta - 1}L^{-s},
\]
provided that
\begin{equation}\label{eq7.10}
  (s-4)\sigma > 1 - \theta.
\end{equation}
Since our choice of $\sigma$ and $\theta$ satisfies this inequality, this completes the proof of the theorem.

\subsection{Sketch of the proof of Theorem \ref{thm4}}

The proof of the first part of Theorem \ref{thm4} follows the same script as that of Theorem \ref{thm3}, so we only outline the necessary modifications. First, in place of~\eqref{eq7.4}, we have
\[
  \sum_{|n - X| \le Y} \bigg| \int_{\mathfrak m}  f_0(\alpha)^3 \mathrm e(-\alpha n) \, d\alpha \bigg|^2 \le \int_{\mathfrak m} |f_0(\alpha)|^6 \, d\alpha \ll x^{4\theta - 2\sigma + \eps}.
\]
This suffices to establish an appropriate version of \eqref{eq7.5} for all but $O\big( X^{1-\sigma + \eps} \big)$ values of $n$ with $|n-X| \le Y$.

We also need to be more careful when we approach the required version of \eqref{eq7.2}. A slight variant of the proof of Proposition \ref{prop1} yields (see the treatment of the major arcs in \cite{HaKu10}, especially, the way \cite[Lemma 1]{HaKu10} is applied in the proof of Lemma 4 of that paper)
\begin{equation}\label{eq7.11}
  \int_{\mathfrak M} f_0(\alpha)^3 \mathrm e(-\alpha n) \, d\alpha
  = \mathfrak S_3(n, P)\mathfrak I_3(n) \big( 1 + O \big( L^{-1} \big) \big).
\end{equation}
It is important that \eqref{eq7.11} holds under the same restrictions on $\theta$ and $\sigma$ as in Section \ref{sec5}, and it is to maintain those restrictions that we need the more subtle arguments from \cite{HaKu10}. Here, $\mathfrak I_3(n)$ is a three-dimensional singular integral similar to $\mathfrak I(n)$ above, and $\mathfrak S_3(n, P)$ is a partial sum of the singular series $\mathfrak S_3(n)$ for sums of three squares of primes. Unlike the case $s \ge 4$, this partial singular series poses some technical difficulties, but those have been resolved in prior work on the Waring--Goldbach problem for three squares (see Liu and Zhan \cite[Lemma 8.8]{LiuZhan} and Harman and Kumchev \cite[Lemma 7]{HaKu10}). In particular, by \cite[Lemma 7]{HaKu10}, we have
\[
  \mathfrak S_3(n,P)\mathfrak I_3(n) \gg x^{2\theta-1}L^{-6}
\]
for all but $O\big( X^{1-\sigma+\eps} \big)$ values of $n \in \mathcal H_3$ with $|n - X| \le Y$. This suffices to complete the proof of~\eqref{eq1.7}.

To establish the second part of Theorem \ref{thm4}, we have to make similar adjustments to the proof of Theorem \ref{thm2}. Leaving the sieve part of the argument unchanged, we then obtain a version of \eqref{eq1.7} with $\sigma = (2\theta - 1)/7$ and complete the proof of the theorem.

\section{Final remarks}
\label{sec8}

We conclude this paper with some remarks on the possibilities for further improvements on some of our theorems.
\begin{enumerate}
  \item The most attractive (but also most challenging) direction for further progress involves the restriction $\theta > 8/9$ in Theorem \ref{thm2}. In our work, that restriction is forced upon us by the assumption near the end of Section \ref{sec2} that $\sigma > 1 - \theta$. In our arguments, we require that $\sigma < (3\theta - 2)/6$ in several places in Section \ref{sec4.1} and that $\sigma \le (2\theta - 1)/7$ to ensure that hypothesis (iii) in Section \ref{sec5} holds. Either of those upper bounds on $\sigma$ in combination with the requirement $\sigma > 1 - \theta$ restricts $\theta$ to the range in Theorem \ref{thm2}. However, as explained below, the constraint $\sigma \le (2\theta - 1)/7$ can be eliminated, though at a considerable cost in terms of added complexity to the already involved argument in Section \ref{sec5}. Since some bounds for ``long'' quadratic Weyl sums over primes do not yet have analogues for sums over short intervals, it is possible that the restriction $\sigma < (3\theta - 2)/6$ in Lemmas \ref{lem4.3} and \ref{lem4.4} can also be relaxed. It seems that if one succeeds to relax that restriction, one should be able to reach values of $\theta$ below $8/9$.
  \item A possible improvement on Theorems \ref{thm2} and \ref{thm4}, one which does not require new exponential sum bounds, concerns the exponents $(16 - 11\theta)/14$ and $(8 - 2\theta)/7$ in those results. Those exponents are determined by the choice $\sigma = (2\theta - 1)/7$ in Section \ref{sec6}, which is made in accordance with the restriction $z \ge x^{\sigma}$ in hypothesis (iii) above. Since that restriction is imposed more for convenience than out of necessity, it is possible to dispense with it and to increase the value of $\sigma$ in the proof of Theorem \ref{thm2}, at least when $\theta$ is close to $1$. That should result in sharper bounds and may even close the small gap between our results with $\theta = 1 - \eps$ and \eqref{i.0}. We chose not to pursue this possibility here, because the omission of the assumption $z \ge x^{\sigma}$ from hypothesis (iii) unleashes a tidal wave of technical complications on the major arcs, with very little potential return. For a glimpse of those complications, the reader can compare the treatments of the major arcs in the papers of Harman and Kumchev \cite{HaKu06, HaKu10}.
  \item It is also possible to achieve small improvements on the estimates in Theorems \ref{thm3} and \ref{thm4} when $13/15 < \theta \le 8/9$. In this range, those theorems claim their respective bounds with $\sigma = (2\theta - 1)/8$, a value obtained by applying an exponential sum estimate of Liu and Zhan \cite[Theorem 2]{LiZh96}. It is possible to modify the sieve argument in Section \ref{sec6} so that it can be applied when $\theta \le 8/9$ and $0 < \sigma < (3\theta - 2)/6$. (When $\theta \le 8/9$, the latter condition implies $\sigma < (2\theta - 1)/7$, so this should not lead to major arc troubles.) One should then be able to leverage the modified sieve construction into a larger choice of $\sigma$, and hence, stronger upper bounds. We did not pursue this path, because we wanted to keep the combinatorial argument in Section \ref{sec6} relatively simple. However, taken on its own, the work involved in such an improvement should not be prohibitive, and we hope to see this minor flaw of our theorems corrected in the future.
\end{enumerate}

\begin{acknowledgment}
  This paper was written while the second author was visiting Towson University on a scholarship under the State Scholarship Fund
  by the China Scholarship Council. He would like to take this opportunity to thank the Council for the support and the
  Towson University Department of Mathematics for the hospitality and the excellent working conditions.
  The first author thanks Professors Jianya Liu and Guangshi L\"{u} and Shandong University at Weihai for their hospitality
  during a visit to Weihai in August of 2011, when a portion of this work was completed.
  Furthermore, both authors would like to express their gratitude to Professors Liu and L\"{u}
  for several suggestions and comments on the content of the paper.
  Last but not least, we would like to thank Professor Trevor Wooley for suggesting the question answered in Theorem \ref{thm5}.
\end{acknowledgment}

\end{document}